\newcolumntype{x}[1]{>{\centering\let\newline\\\arraybackslash\hspace{0pt}}p{#1}}
\numberwithin{equation}{section}
\newcommand{\leqnomode}{\tagsleft@true\let\veqno\@@leqno}
\newcommand{\reqnomode}{\tagsleft@false\let\veqno\@@eqno}
\newcommand{\defi}[1]{{\textit{#1}}}
\newcommand{\C}{{\mathbb{C}}}
\renewcommand{\P}{{\mathcal{P}}}
\renewcommand{\Pi}{P_{\mathbf{i}}}
\newcommand{\PGC}{\P_{\mathrm{GC}}}
\newcommand{\RGC}{\mathscr{R}_{\mathrm{GC}}}
\newcommand{\ffi}{f_{\Pi}}
\newcommand{\Pj}{P_{\mathbf{j}}}
\newcommand{\R}{{\mathbb{R}}}
\newcommand{\Z}{{\mathbb{Z}}}
\newcommand{\barn}{{\bar{n}}}
\newcommand{\WD}{\mathcal{WD}}
\newcommand{\ad}{\delta}
\newcommand{\Rn}[1]{{\mathscr{R}(w_0^{(#1)})}}
\newcommand{\node}{\mathrm{node}}
\DeclareMathOperator{\ind}{ind}
\DeclareMathOperator{\SL}{SL}
\DeclareMathOperator{\GC}{GC}
\newcommand{\A}{\mathsf{A}}
\newcommand{\D}{\mathsf{D}}
\newcommand{\cn}{\mathsf{c}}
\newcommand{\gc}{\mathsf{gc}}
\newcommand{\an}{\mathsf{a}}
\newcommand{\dn}{\mathsf{d}}
\newcommand{\RR}{\mathscr{R}}
\newtheorem{theorem}{Theorem}[section]
\newtheorem{lemma}[theorem]{Lemma}
\newtheorem{proposition}[theorem]{Proposition}
\newtheorem{corollary}[theorem]{Corollary}
\theoremstyle{definition}
\newtheorem{Question}[theorem]{Question}
\newtheorem{example}[theorem]{Example}
\newtheorem{definition}[theorem]{Definition}
\newtheorem{remark}[theorem]{Remark}
\begin{document}
	
\title[Enumeration of Gelfand--Cetlin type reduced words]{Enumeration of Gelfand--Cetlin type \\ reduced words}
\author{Yunhyung Cho}
\address{Department of Mathematics Education, Sungkyunkwan University, Seoul 03063, Republic of Korea}
\email{yunhyung@skku.edu}

\author{Jang Soo Kim}
\address{Department of Mathematics, Sungkyunkwan University, Suwon 16419, Republic of Korea}
\email{jangsookim@skku.edu}

\author{Eunjeong Lee}
\address{Center for Geometry and Physics, Institute for Basic Science (IBS), Pohang 37673, Republic of Korea}
\email{eunjeong.lee@ibs.re.kr}

\keywords{Commutation classes, Gelfand--Cetlin polytopes, posets, reduced words, reduced decompositions, Young tableaux of shifted shape}
\subjclass[2010]{Primary: 05Axx, 06A07, Secondary: 14M15, 52B20} 

\thanks{
	The first author was supported by the National Research Foundation of Korea(NRF) grant funded by the Korea government(MSIP; Ministry of Science, ICT and Future Planning) (NRF-2020R1C1C1A01010972). 
  The second was supported by NRF grants \#2019R1F1A1059081 and \#2016R1A5A1008055.
	The third author was supported by IBS-R003-D1.}
\begin{abstract}
The combinatorics of reduced words and commutation classes plays an important
  role in geometric representation theory. A string polytope is a lattice
  polytope associated to each reduced word of the longest element $w_0$ in the
  symmetric group which encodes the character of a certain irreducible
  representation of a Lie group of type $A$. In this paper, we provide a
  recursive formula for the number of reduced words of $w_0$ such that the
  corresponding string polytopes are combinatorially equivalent to a Gelfand--Cetlin
  polytope. The recursive formula involves the number of standard Young
  tableaux of shifted shape. We also show that each commutation class is
  completely determined by a list of quantities called indices.
\end{abstract}
\maketitle
\setcounter{tocdepth}{1} 
\date{\today}

\section{Introduction}

A {\em string polytope}, introduced by Littelmann~\cite{Li}, is a convex
polytope $\Delta_{\mathbf{i}}(\lambda)$ determined by two data: a reduced word
$\mathbf{i}$ of the longest element in the Weyl group of a reductive algebraic
group and a dominant weight $\lambda$. Its lattice points parametrize the dual
canonical basis elements of the irreducible representation with highest weight $\lambda$
so that it can be regarded as a non-abelian generalization of a Newton polytope
in toric geometry. The importance of string polytopes has been raised for the
study of mirror symmetry of flag varieties (see, for example,~\cite{BCKV}). We
refer the reader to~\cite{Lus90}, \cite{Kash90}, \cite{Li}, \cite{GlPo00},
and~\cite{BeZe01} for various descriptions of string polytopes.

One of the most famous examples of string polytopes is the Gelfand--Cetlin
polytope. Similarly to string polytopes, Gelfand--Cetlin polytopes have been
used to describe the irreducible representation of $\SL_{n+1}(\C)$ with highest
weight~$\lambda$. We recall the definition of Gelfand--Cetlin polytopes
from~\cite{GC1950} and~\cite{GS83}. Let $\lambda =
(\lambda_1,\dots,\lambda_n)$ be a sequence of nonnegative integers. For each~$\lambda$, define the
\defi{Gelfand--Cetlin polytope} $\GC(\lambda)$ to be the closed convex polytope
in $\R^{\barn}$ consisting of the points $(x_{k,j})_{1\le j\le k\le n}$ satisfying
the inequalities
\[
x_{k+1,j} \geq x_{k,j} \geq x_{k+1,j+1},\quad 1 \leq j \leq k \leq n,
\]
where $\barn = n(n+1)/2$, $x_{n+1,j} =\lambda_j+\cdots+\lambda_n$ for $1\le
j\le n$, and $x_{n+1,n+1} = 0$. Note that $\GC(\lambda)$ has the maximum
  dimension if each $\lambda_i$ is positive, i.e., $\lambda$ is regular. In this case, we say that
  $\GC(\lambda)$ is a full dimensional Gelfand--Cetlin polytope of rank~$n$.
 In~\cite[Corollary~5 in Section~5]{Li} it is shown that the
Gelfand--Cetlin polytope $\GC(\lambda)$ is an example of a string polytope of
$\SL_{n+1}(\C)$. More precisely, we have 
\[
\GC(\lambda) \simeq \Delta_{(1,2,1,3,2,1,\dots,n,n-1,\dots,1)}(\lambda),
\]
where $\simeq$ means the unimodular equivalence\footnote{Given integral
  polytopes $P \subset \R^d$ and $Q \subset \R^d$, we say that $P$ and $Q$ are
  \textit{unimodularly equivalent} if there exist a matrix $U \in \textup{M}_{d
    \times d}(\Z)$ satisfying $\det U = \pm 1$ and an integral vector $\mathbf v
  \in \Z^d$ such that $Q = f_U(P) + \mathbf v$. Here, $f_U$ is the linear
  transformation defined by $U$.}. In particular, these two
  polytopes are combinatorially equivalent. We refer the reader
to~\cite{ACK18} and references therein for more information on the
combinatorics of Gelfand--Cetlin polytopes.

The string polytope $\Delta_{\mathbf{i}}(\lambda)$ has the maximum dimension if
and only if the weight~$\lambda$ is regular.  Once the weight is assumed to be regular, the combinatorial type of the
string polytope is independent of the choice of the weight. In this paper, we
consider string polytopes $\Delta_{\mathbf{i}}(\lambda)$ of type~$A$ for a fixed
regular dominant weight $\lambda$ so that each
$\Delta_{\mathbf{i}}(\lambda)$ is determined by the reduced word $\mathbf{i}$.

The motivation of this paper is to enumerate the string polytopes
$\Delta_{\mathbf{i}}(\lambda)$ of type~$A$ which are combinatorially equivalent to
Gelfand--Cetlin polytopes. To this end, we study the reduced words $\mathbf{i}$
which give rise to such string polytopes $\Delta_{\mathbf{i}}(\lambda)$. To
state our results, we introduce some terminologies.

Let $\frak{S}_{n+1}$ be the symmetric group (i.e., the Weyl group of
$\mathrm{SL}_{n+1}(\C)$) of degree~$n+1$ and denote by $s_i := (i, i+1)$ the
simple transposition which swaps $i$ and~$i+1$ and fixes all other elements of
$[n+1] = \{1, \dots, n+1\}$. The set $\{s_1, \dots, s_n\}$ of simple
transpositions generates $\frak{S}_{n+1}$, hence every element $w \in
\frak{S}_{n+1}$ can be written in the following form:
\[
  w = s_{i_1} \cdots s_{i_r}, \quad \quad i_1, \dots, i_r \in [n].
\]
In this case, the sequence ${\bf i} := (i_1, \dots, i_r)$ is called a {\em word}
of $w$. The {\em length} $\ell(w)$ of~$w$ is defined to be the smallest integer
$r$ for which $(i_1,\dots,i_r)$ is a word of $w$. A word $(i_1, \dots, i_r)$ of
$w$ is \emph{reduced} if $r=\ell(w)$. We denote by $\RR(w)$ the set of reduced
words of $w$. There is a unique element $w_0^{(n+1)}$, called the {\em longest
element}, in $\mathfrak{S}_{n+1}$ such that $\ell(w)\le\ell(w_0^{(n+1)})$ for all
$w\in\mathfrak{S}_{n+1}$.

For $i,j \in [n]$ satisfying $|i-j| > 1$, we have $s_i s_j = s_j s_i$. This
induces an operation on the set $\RR(w)$ defined by $(\dots, i, j, \dots)
\mapsto (\dots, j,i, \dots)$, which is called a {\em commutation} (or a {\em
  2-move}). Define an equivalence relation $\sim$ on $\RR(w)$ by
\[
{\bf i} \sim {\bf i}' \quad \Leftrightarrow \quad \text{${\bf i}$ is obtained
  from ${\bf i}'$ by a sequence of commutations}.
\]
An element in $[\RR(w)] := \RR(w) / \sim$ is called a {\em commutation class} for $w$.	
For a recent account of the study of commutation classes, we refer the reader to
\cite{Bedard99, STWW17, FMPTE19, GMS} and references therein.

One important fact about commutation classes for our purpose is that two string
polytopes $\Delta_{\mathbf{i}}(\lambda)$ and $\Delta_{\mathbf{i}'}(\lambda)$ are
combinatorially equivalent if (but not necessarily only if) ${\bf i}$ and
${\bf i}'$ are in the same commutation class (see~\cite[Lemma~3.1]{CKLP}).
Accordingly, studying the elements in $[\Rn{n+1}]$ is closely related to the
classification problem of the combinatorial types of string polytopes.

We say that $\mathbf{i}\in\Rn{n+1}$ is a {\em Gelfand--Cetlin type} reduced word
if the corresponding string polytope $\Delta_{\mathbf{i}}(\lambda)$ is
combinatorially equivalent to a full dimensional Gelfand--Cetlin
polytope of rank $n$. Let $\gc(n)$ be the number of Gelfand--Cetlin type reduced words
in~$\Rn{n+1}$. By the definition of $\gc(n)$, it also counts the number of
string polytopes $\Delta_{\mathbf{i}}(\lambda)$ with~$\mathbf{i}\in\Rn{n+1}$
that are combinatorially equivalent to a full dimensional Gelfand--Cetlin
polytope of rank $n$.

The first main result in this paper is the following recurrence relation for
$\gc(n)$.

\begin{theorem}[Theorem~\ref{thm_SYT}]\label{thm_main_1}
	The number $\gc(n)$ of Gelfand--Cetlin type reduced words in~$\Rn{n+1}$
  satisfies
	\[
	\gc(n) = \sum_{k=1}^n g^{(n,n-1,\dots,n-k+1)} \gc(n-k),
	\]
	where for $\mu=(\mu_1,\dots,\mu_t)$,
\[
g^{\mu} = \frac{|\mu|!}{\mu_1 ! \mu_2 ! \cdots \mu_t !}
\prod_{i < j} \frac{\mu_i - \mu_j}{\mu_i + \mu_j},
\]  
which is the number of standard Young tableaux of shifted shape $\mu$.
\end{theorem}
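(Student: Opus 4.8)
The plan is to count over commutation classes rather than over individual reduced words, and to read off the recurrence from a canonical recursive decomposition of the combinatorial data attached to a Gelfand--Cetlin type word. By \cite[Lemma~3.1]{CKLP} (recalled above), reduced words in one commutation class give combinatorially equivalent string polytopes, so being of Gelfand--Cetlin type is a property of the commutation class, and the reduced words inside a commutation class $c$ are precisely the linear extensions of the associated poset $P_{\mathbf{i}}$. Writing $e(P)$ for the number of linear extensions of $P$, we thus have $\gc(n)=\sum_{c}e(P_{c})$, the sum running over the Gelfand--Cetlin type commutation classes $c$ of $w_0^{(n+1)}$.

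The heart of the argument is to show that the poset of a Gelfand--Cetlin type commutation class is obtained from a smaller one by stacking a shifted staircase poset on top. Concretely, using the characterization of Gelfand--Cetlin type reduced words proved above -- equivalently, the description of their commutation classes in terms of their list of indices -- I would attach to each Gelfand--Cetlin type commutation class $c$ of $w_0^{(n+1)}$ a unique integer $k=k(c)\in\{1,\dots,n\}$ and a unique Gelfand--Cetlin type commutation class $c'$ of $w_0^{(n-k+1)}$ (viewed in the parabolic subgroup $\langle s_1,\dots,s_{n-k}\rangle\cong\mathfrak{S}_{n-k+1}$), so that $P_{c}$ decomposes as an ordinal sum
\[
P_{c}\;\cong\;P_{c'}\ \oplus\ S_{(n,\,n-1,\,\dots,\,n-k+1)},
\]
where $S_{\mu}$ is the shifted Young poset of the strict partition $\mu$: the bottom summand is isomorphic to $P_{c'}$ as a \emph{labelled} poset (remembering which simple reflection labels each node), and the top summand records how the ``large'' simple reflections $s_{n},s_{n-1},\dots,s_{n-k+1}$ (together with the smaller ones forced on them in a reduced word) interleave. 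The assignment $c\mapsto(k(c),c')$ should be a bijection onto $\bigsqcup_{k=1}^{n}\{\text{Gelfand--Cetlin type commutation classes of }w_0^{(n-k+1)}\}$, the inverse stacking the staircase block of the correct shape and labels onto a given smaller class; at the level of reduced words this refines to a bijection
\[
\mathscr{R}_{\mathrm{GC}}(w_0^{(n+1)})\ \xrightarrow{\ \sim\ }\ \bigsqcup_{k=1}^{n}\ \bigl(\text{SYT of shifted shape }(n,n-1,\dots,n-k+1)\bigr)\times\mathscr{R}_{\mathrm{GC}}(w_0^{(n-k+1)}),
\]
sending a Gelfand--Cetlin type word to the standard Young tableau recording the relative order in $\mathbf{i}$ of the nodes of its top staircase block, together with the reduced word of $w_0^{(n-k+1)}$ read off from the remaining nodes. (Note that the \emph{abstract} poset $S_{(n,\dots,n-k+1)}$ does not by itself determine $k$ -- for instance $S_{(n,\dots,1)}\cong S_{(n,\dots,2)}\oplus\{\ast\}$ -- so it is the labels, hence the index data, that make $k(c)$ well defined; this is exactly where the classification of commutation classes by indices is used.)

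Granting the decomposition, the recurrence is immediate. Since $|\mathscr{R}_{\mathrm{GC}}(w_0^{(m+1)})|=\gc(m)$ and the number of standard Young tableaux of shifted shape $\mu$ equals $g^{\mu}$ (as recalled in the statement), the displayed bijection yields $\gc(n)=\sum_{k=1}^{n}g^{(n,n-1,\dots,n-k+1)}\gc(n-k)$; equivalently, in poset language, $e(P\oplus Q)=e(P)\,e(Q)$ gives $e(P_{c})=e(P_{c'})\cdot g^{(n,\dots,n-k+1)}$, and grouping $\gc(n)=\sum_{c}e(P_{c})$ by $k=k(c)$ and then by $c'$ gives the same formula. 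The convention $\gc(0)=1$ (the empty class of $\mathfrak{S}_{1}$) accounts for the term $g^{(n,\dots,1)}\gc(0)$ coming from $k=n$, which recovers the ``main'' Gelfand--Cetlin class $(1,2,1,\dots,n,\dots,1)$ whose poset is $S_{(n,n-1,\dots,1)}$ itself.

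The main obstacle is establishing the recursive decomposition in the second paragraph, and it has three parts. First, one must show that the ``top'' of a Gelfand--Cetlin type poset of rank $n$ is always a \emph{full} shifted staircase of some shape $(n,n-1,\dots,n-k+1)$ -- neither a proper sub-shape nor a staircase of a different shape -- which requires a precise understanding of how $s_{n}$, then $s_{n-1}$, and so on must occur in a Gelfand--Cetlin type reduced word; this is where the earlier structural results, and in particular the index machinery, do the real work. Second, one must check that deleting this top block leaves a poset that is again of Gelfand--Cetlin type for the smaller symmetric group $\mathfrak{S}_{n-k+1}$, i.e.\ that the recursion genuinely closes up. Third, one must verify the well-definedness and uniqueness of $k(c)$, which is a statement about the labelled poset and not merely about its order structure. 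Once these are in place, bijectivity of $c\mapsto(k(c),c')$ together with the two standard facts -- multiplicativity of linear extensions over ordinal sums and $e(S_{\mu})=g^{\mu}$ -- finishes the proof.
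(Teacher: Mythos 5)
Your proposal is correct and takes essentially the same route as the paper: both reduce $\gc(n)$ to a sum of numbers of linear extensions over the Gelfand--Cetlin type word-poset classes, peel off a top shifted staircase $Q_{(n,n-1,\dots,n-k+1)}$ whose size $k$ is read off from the constant suffix of the sequence $\delta\in\{\A,\D\}^{n-1}$ classifying the class, and conclude via multiplicativity of linear extensions over ordinal sums together with $e(Q_\mu)=g^\mu$. The only discrepancy is bookkeeping at the bottom of the recursion: as stated, your map $c\mapsto(k(c),c')$ is two-to-one over the rank-one class (both constant sequences $\delta$ land there) and nothing realizes $k=n$, an issue the paper sidesteps by splitting $\gc$ into $\an+\dn$ according to whether $\ind_\A$ or $\ind_\D$ vanishes and letting the recursion alternate between the two (the final count is unaffected since $g^{(n,\dots,2)}=g^{(n,\dots,1)}$ and $\gc(0)=\gc(1)=1$).
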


As a consequence of the proof of the above theorem, we obtain that the number of
commutation classes consisting of Gelfand--Cetlin type reduced words in
$\Rn{n+1}$ is $2^{n-1}$ (see Corollary~\ref{cor_main}). This result was also
proved in a recent paper~\cite{GMS} using a different method.

We note that the number of string polytopes $\Delta_{\mathbf i}(\lambda)$ (for $\mathbf i \in \RR(w_0^{(n+1)})$) which are \textit{unimodularly} equivalent to the Gelfand--Cetlin polytope $\GC(\lambda)$ is the same as the number $\gc(n)$. Accordingly, the above theorem also enumerates the number of string polytopes $\Delta_{\mathbf i}(\lambda)$ which are unimodularly equivalent to the Gelfand--Cetlin polytope $\GC(\lambda)$ (see Corollary~\ref{cor_main2}).

A crucial object in the proof of Theorem~\ref{thm_main_1} is a quantity called
$\delta$-index. For a sequence $\delta\in\{\A,\D\}^{n-1}$ of two letters $\A$
and $\D$, the \emph{$\delta$-index} $\ind_\delta(\mathbf{i})$ of a reduced word
$\mathbf{i}\in\Rn{n+1}$ is an element in $\Z^{n-1}$ which measures how far a
given word is from the standard reduced word
\[ (1,2,1,3,2,1, \dots, n,n-1, \dots,1)
\] of $w_0^{(n+1)}$. See Section~\ref{secContractionsExtensionsAndIndices} for
the precise definition. 

Recently, the first and the third authors together with Kim and
Park~\cite[Theorem~A]{CKLP} classified all Gelfand--Cetlin type reduced words in
$\Rn{n+1}$ in terms of $\delta$-indices. More precisely, they showed that
$\mathbf{i}\in\Rn{n+1}$ is a Gelfand--Cetlin reduced word if and only if there
is a sequence $\delta\in\{\A,\D\}^{n-1}$ such that
$\ind_\delta(\mathbf{i})=(0,\dots,0)\in\Z^{n-1}$.

It turns out that for $\mathbf{i}, \mathbf{j}\in\Rn{n+1}$, if
$\ind_\delta(\mathbf{i}) = \ind_\delta(\mathbf{j})=(0,\dots,0)\in\Z^{n-1}$ for
some $\delta\in\{\A,\D\}^{n-1}$, then $\mathbf{i}\sim\mathbf{j}$. However, the
condition $\ind_{\delta}(\mathbf{i}) = \ind_{\delta}(\mathbf{j})$ for some
$\delta\in\{\A,\D\}^{n-1}$ does not always imply $\mathbf{i}\sim\mathbf{j}$ (see
Example~\ref{example_total_index}).

The second main result in this paper is the following theorem, which shows that
the $\delta$-indices $\ind_\delta(\mathbf{i})$ for all
$\delta\in\{\A,\D\}^{n-1}$ completely determine the commutation class of
$\mathbf{i}$.

\begin{theorem}[Theorem~\ref{thm_main_injection}]\label{thm_main_2}
  Let $\mathbf{i}, \mathbf{j}\in\Rn{n+1}$. If $\ind_\delta(\mathbf{i}) =
  \ind_\delta(\mathbf{j})$ for all $\delta\in\{\A,\D\}^{n-1}$, then
  $\mathbf{i}\sim\mathbf{j}$.
\end{theorem}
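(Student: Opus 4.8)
The plan is to prove Theorem~\ref{thm_main_2} by induction on $n$, using the $\delta$-index data to peel off the ``last layer'' of a reduced word. Recall from the cited work~\cite{CKLP} that a reduced word $\mathbf{i}\in\Rn{n+1}$ records, in a suitable sense, how the letter $n$ is interleaved with a reduced word of $w_0^{(n)}$; the $\delta$-index for a fixed $\delta$ measures, entry by entry, the discrepancy between $\mathbf{i}$ and the standard word along a prescribed sequence of contractions/extensions indexed by $\delta$. The key point I want to exploit is that the \emph{full} collection of indices $\{\ind_\delta(\mathbf{i}):\delta\in\{\A,\D\}^{n-1}\}$ is rich enough to reconstruct, up to commutation, the positions of the $n$'s in $\mathbf{i}$ relative to the rest of the word, and also the entire collection of indices of the ``contracted'' word in $\Rn{n}$ obtained by deleting those $n$'s.

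First I would set up the base case $n=1$ (or $n=2$), which is immediate since $\Rn{2}$ and $\Rn{3}$ each contain few enough words that equality of all indices forces equality of commutation classes. For the inductive step, let $\mathbf{i},\mathbf{j}\in\Rn{n+1}$ with $\ind_\delta(\mathbf{i})=\ind_\delta(\mathbf{j})$ for all $\delta\in\{\A,\D\}^{n-1}$. I would first show that the data of the indices determines the relative placement of the occurrences of the letter $n$: concretely, there are exactly $n$ occurrences of $n$ in any reduced word of $w_0^{(n+1)}$, and the $\delta$-index (for, say, the $\delta$ that begins with the contraction separating the top layer) reads off how these $n$'s are positioned among the blocks of smaller letters. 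Using the commutation relations $s_n s_k = s_k s_n$ for $k\le n-2$, one can normalize both $\mathbf{i}$ and $\mathbf{j}$ so that their $n$-layers are in a canonical interleaved form; the claim is that equality of the relevant indices forces these canonical forms to coincide. After this normalization, deleting all $n$'s from $\mathbf{i}$ and from $\mathbf{j}$ yields words $\mathbf{i}',\mathbf{j}'\in\Rn{n}$, and the remaining task is to verify that $\ind_{\delta'}(\mathbf{i}')=\ind_{\delta'}(\mathbf{j}')$ for all $\delta'\in\{\A,\D\}^{n-2}$; this should follow from a compatibility/restriction identity expressing $\ind_{\delta'}(\mathbf{i}')$ in terms of (a subset of) the $\ind_\delta(\mathbf{i})$, which I would isolate as a preliminary lemma. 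The inductive hypothesis then gives $\mathbf{i}'\sim\mathbf{j}'$, and lifting a sequence of commutations on $\mathbf{i}'$ back to $\mathbf{i}$ (the $n$'s only commute past letters $\le n-2$, so each 2-move downstairs lifts) completes the argument.

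The main obstacle I anticipate is the first half of the inductive step: showing that the indices pin down the $n$-layer up to commutation, i.e.\ that no two inequivalent interleavings of the top letter can share all $\delta$-indices. This is where the quantifier over \emph{all} $\delta\in\{\A,\D\}^{n-1}$ is essential — a single $\delta$ is known to be insufficient (Example~\ref{example_total_index}), so the proof must genuinely use how switching an $\A$ to a $\D$ at a given position changes which ``side'' of a crossing is being measured, thereby extracting enough linear functionals of the layer data to reconstruct it. I would make this precise by writing the index vector as an explicit affine function of a combinatorial encoding of the $n$-layer (e.g.\ a lattice path or a sequence of multiplicities), checking that the matrix of coefficients obtained by ranging over the $2^{n-1}$ choices of $\delta$ has full rank, and concluding injectivity. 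The lifting step and the restriction lemma for indices are expected to be routine once the definitions in Section~\ref{secContractionsExtensionsAndIndices} are unwound, so the crux is this rank/reconstruction computation for the top layer.
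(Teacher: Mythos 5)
There is a genuine gap, and also a concrete false step in the setup. First, your decomposition is not well defined: you propose to delete all occurrences of the letter $n$ from $\mathbf{i}$ to obtain a word $\mathbf{i}'\in\Rn{n}$, but neither the number of occurrences of $n$ nor the reducedness of the deletion is invariant. Already for $n=2$, the word $(1,2,1)$ contains one $2$ while $(2,1,2)$ contains two, and deleting the $2$'s from $(1,2,1)$ leaves $(1,1)$, which is not reduced. The operation that actually behaves well here is not letter deletion but the wire-removal contractions $C_\A$ and $C_\D$ of Section~\ref{secContractionsExtensionsAndIndices} (remove the first or last wire of the wiring diagram, equivalently the ascending or descending chain of the word poset, and shift columns); these are the operations with respect to which the $\delta$-indices are defined, so the ``restriction identity'' you want is then immediate: the indices of $C_\A(P)$ and $C_\D(P)$ over $\{\A,\D\}^{n-2}$ are literally sub-vectors of the indices of $P$, and the induction hypothesis gives $C_\A(P)\sim C_\A(Q)$ and $C_\D(P)\sim C_\D(Q)$ at once.

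Second, the step you yourself identify as the crux --- that the full family of indices pins down the top layer --- is exactly the part you do not prove; you only conjecture that a certain $2^{n-1}$-row coefficient matrix has full rank. The paper resolves this without any rank computation, and in fact without using the $2^{n-1}$ indices as independent linear functionals: by Proposition~\ref{prop:EC}, $P$ is recovered from the pair $(C_\D(P), I_\D(P))$, and by Lemma~\ref{lemma_order_ideal_n_of_elements} the ideal $I_\D(P)$ is determined by how many of its elements lie in each column. Those column counts are then extracted from the \emph{pair} of contractions: Lemma~\ref{lemma_An_An-1_in_PD} shows the ascending chain of $P$ restricts to that of $C_\D(P)$ (and dually), which yields the bookkeeping identities \eqref{eq:RA} and \eqref{eq:RD} expressing the column counts of $I_\D(C_\A(R))$ and $I_\A(C_\D(R))$ in terms of the quantities $a_i(R),b_i(R),c_i(R)$; comparing these for $R=P$ and $R=Q$ forces the counts in \eqref{eq:claim} to agree. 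Your overall skeleton (induction plus reconstruction of the removed layer) matches the paper's, but as written the argument rests on an incorrect reduction and an unestablished injectivity claim, so it does not constitute a proof.
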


In order to prove our main results, we consider word posets, which are similar to
wiring diagrams. A word poset is a poset $P$ together with a function $f_P:P\to
\Z_{>0}$. Each commutation class in $[\Rn{n+1}]$ corresponds to a word poset and
the cardinality of the commutation class is equal to the number of linear
extensions of the corresponding word poset.

This paper is organized as follows. In
Section~\ref{secPosetsOfReducedDecompositions}, we give basic definitions. In
Section~\ref{secContractionsExtensionsAndIndices}, we recall the operations on
$\Rn{n+1}$ called contractions and extensions. Moreover, we provide the
definition of indices and prove several properties of them. In
Section~\ref{secGelfandCetlinTypeCommutationClasses}, we study Gelfand--Cetlin
type reduced words and provide a proof of Theorem~\ref{thm_main_1}. In
Section~\ref{secInjectivityOfTheTotalIndexMap}, we provide a proof of
Theorem~\ref{thm_main_2}.

\section{Basic definitions}
\label{secPosetsOfReducedDecompositions}

In this section, we give basic definitions and some of their properties which
will be used throughout this paper.

\subsection{Commutation classes}

Let $w$ be an element in $\frak{S}_{n+1}$ and denote by $\RR(w)$ the set of reduced words of~$w$, i.e., 
\[
\RR(w) = \{ (i_1,\dots,i_{\ell}) \in [n]^{\ell} \mid s_{i_1} s_{i_2} \cdots s_{i_{\ell}} = w\},
\]
where  $\ell=\ell(w)$ is the length of $w$.
It is well known that 
\[
\ell(w) = \# \{1 \leq i < j \leq n \mid w(i) > w(j) \}.
\]
We denote by $w_0^{(n+1)}$ the longest element 
$n+1 \ n \ \dots
  \ 1$ of $\frak{S}_{n+1}$ (using the one-line notation). The length $\barn$ of the
longest element $w_0^{(n+1)}$ is given by
\begin{equation}\label{eq_def_of_nbar}
\barn =  \ell(w_0^{(n+1)})= \frac{n (n+1)}{2} \quad \text{ for } n \in \Z_{>0}:=\{1,2,\dots\}.
\end{equation}

Recall that for a given ${\bf i} \in \RR(w)$, one can produce new reduced
words using {\em braid moves}. There are two types of braid moves as follows:
\begin{itemize}
\item A \emph{2-move} replaces two consecutive elements $i,j$ in ${\bf i}$ by
  $j,i$ for some integers $i$ and $j$ with $|i-j| > 1$.
\item A \emph{3-move} replaces three consecutive elements $i,j,i$ in ${\bf i}$ by
  $j,i,j$ for some integers $i$ and $j$ with $|i-j| = 1$.
\end{itemize}
Note that braid moves do not change the product of the simple transpositions
for $\mathbf{i}$ since $s_i s_j = s_j s_i$ for $|i-j|>1$ and $s_i s_{i+1} s_i =
s_{i+1} s_i s_{i+1}$. According to Tits' Theorem \cite{Ti}, any
two reduced words in $\RR(w)$ are connected by a sequence of braid moves. Braid
moves and Tits' theorem can be generalized to other Coxeter systems
(see~\cite[\S3.3]{BB05Combinatorics}).

Define an equivalence relation `$\sim$' on $\RR(w)$ by
\[
	{\bf i} \sim {\bf i}' \quad \Leftrightarrow \quad \text{${\bf i}$ is obtained from ${\bf i}'$  by a sequence of 2-moves}.
\]
We denote by $[\RR(w)] \coloneqq \RR(w)/\sim$ the set of equivalence classes and call an element~$[{\bf i}] \in [\RR(w)]$ a {\em commutation class}.

\begin{remark}\label{rmk_cn}
	There is no known exact formula for the number $\cn(n+1)$ of commutation
	classes of $w_0^{(n+1)}$. Some upper and lower bounds for $\cn(n)$ were
	provided by Knuth \cite[Section~9]{Knu}. Felsner and Valtr \cite[Theorem~2 and
	Proposition~1]{FV11} found the following upper and lower bounds for $\cn(n+1)$
	improving Knuth's results: for a sufficiently large~$n$,
	\[
	2^{0.1887n^2} \leq \cn(n) \leq 2^{0.6571n^2}.
	\] 
	The first few terms of $\cn(n)$ are $1, 1, 2, 8, 62, 908, 24698, 1232944$, see
	\href{https://oeis.org/A006245}{A006245} in~\cite{OEIS}.
\end{remark}

\subsection{Wiring diagrams}\label{rmk_P_i_and_wiring_diagram}

There are several combinatorial models for the commutation classes of the
longest element of $\mathfrak{S}_{n+1}$, see~\cite{DES} and references therein.
We recall a well known combinatorial model, called a wiring diagram
(cf.~\cite{GoodmanPollack93}).

\begin{definition}
	Let  $\mathbf i = (i_1,i_2,\dots,i_{\ell})$ be a word  of $w \in
  \mathfrak{S}_{n+1}$.
  \begin{enumerate}
 \item The \emph{wiring diagram} $G(\mathbf i)$ of $\mathbf{i}$
  is a collection of line segments defined as follows.
  \begin{itemize}
  \item For $1\le i\le n$ and  $1\le j\le n+1$, define $G_i$ to be the collection of line segments
    $(A_j,B_{s_i(j)})$ connecting $A_j$ and $B_{s_i(j)}$,
    where $A_j=(j,1)$ and $B_j=(j,0)$ are points in $\mathbb{R}^2$. The
    intersection of the segments $(A_i,B_{i+1})$ and $(A_{i+1},B_i)$ is called a
    \emph{crossing} in column $i$. See Figure~\ref{fig:G_r}.
  \item Define $G(\mathbf{i})$ to be the configuration obtained by arranging
    $G_{i_1},G_{i_2},\dots,G_{i_\ell}$ vertically in this order. More precisely,
\[
G(\mathbf{i})=\rho^{\ell-1}(G_{i_1})\cup \rho^{\ell-2}(G_{i_2})\cup \dots\cup
    \rho^{0}(G_{i_\ell}), 
\]
where $\rho$ is the translation by $(0,1)$. See Figure~\ref{fig:WD}.
\end{itemize}

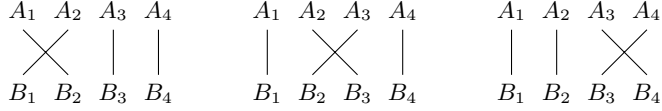
\begin{figure}
  \centering
 \begin{tikzpicture}[scale = 0.6]
		\tikzset{every node/.style = {font = \small}}
  \coordinate (b1) at (0,0);
 \coordinate (b2) at (1,0);
 \coordinate (b3) at (2,0);
 \coordinate (b4) at (3,0);
 
 \coordinate (a1) at (0,1);
 \coordinate (a2) at (1,1);
 \coordinate (a3) at (2,1);
 \coordinate (a4) at (3,1);
 
 \draw (b1)--(a2)
 (b2)--(a1)
 (b3)--(a3)
 (b4)--(a4);
 
  \node[below] at (b1)  {$B_1$};
 \node[below] at (b2) {$B_2$};
 \node[below] at (b3) {$B_3$};
 \node[below] at (b4) {$B_4$};
 \node[above] at (a1) {$A_1$};
 \node[above] at (a2) {$A_2$};
 \node[above] at (a3) {$A_3$};
 \node[above] at (a4) {$A_4$};
 \end{tikzpicture} \qquad 
 \begin{tikzpicture}[scale = 0.6]
 \tikzset{every node/.style = {font = \small}}
 \coordinate (b1) at (0,0);
 \coordinate (b2) at (1,0);
 \coordinate (b3) at (2,0);
 \coordinate (b4) at (3,0);
 
 \coordinate (a1) at (0,1);
 \coordinate (a2) at (1,1);
 \coordinate (a3) at (2,1);
 \coordinate (a4) at (3,1);
 
 \draw (b1)--(a1)
 (b2)--(a3)
 (b3)--(a2)
 (b4)--(a4);
 
 \node[below] at (b1)  {$B_1$};
 \node[below] at (b2) {$B_2$};
 \node[below] at (b3) {$B_3$};
 \node[below] at (b4) {$B_4$};
 \node[above] at (a1) {$A_1$};
 \node[above] at (a2) {$A_2$};
 \node[above] at (a3) {$A_3$};
 \node[above] at (a4) {$A_4$};
 \end{tikzpicture} \qquad
 \begin{tikzpicture}[scale = 0.6]
 \tikzset{every node/.style = {font = \small}}
 \coordinate (b1) at (0,0);
 \coordinate (b2) at (1,0);
 \coordinate (b3) at (2,0);
 \coordinate (b4) at (3,0);
 
 \coordinate (a1) at (0,1);
 \coordinate (a2) at (1,1);
 \coordinate (a3) at (2,1);
 \coordinate (a4) at (3,1);
 
 \draw (b1)--(a1)
 (b2)--(a2)
 (b3)--(a4)
 (b4)--(a3);
 
 \node[below] at (b1)  {$B_1$};
 \node[below] at (b2) {$B_2$};
 \node[below] at (b3) {$B_3$};
 \node[below] at (b4) {$B_4$};
 \node[above] at (a1) {$A_1$};
 \node[above] at (a2) {$A_2$};
 \node[above] at (a3) {$A_3$};
 \node[above] at (a4) {$A_4$};
 \end{tikzpicture}
  \caption{The configurations $G_1,G_2$, and $G_3$ (from left to right) for $n=4$.}
  \label{fig:G_r}
\end{figure}

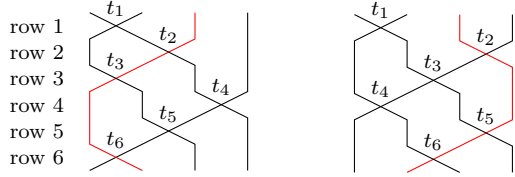
\begin{figure}
  \centering
		\begin{tikzpicture}[scale = 0.7]
		\tikzset{every node/.style = {font = \footnotesize}}
		\tikzset{red line/.style = {line width=0.5ex, red, semitransparent}}
		\tikzset{blue line/.style = {line width=0.5ex, blue, semitransparent}}
		
		\draw (0,0)--(3,1.5)--(3,3);
		\draw[red] (1,0)--(1,0)--(0,0.5)--(0,1.5)--(2,2.5)--(2,3);
		\draw (2,0)--(2,0.5)--(1,1)--(1,1.5)--(0,2)--(0,2.5)--(1,3)--(1,3);
		\draw (3,0)--(3,1)--(2,1.5)--(2,2)--(0,3)--(0,3);
		
		\node[above] at (0.5,2.75) {$t_1$};
		\node[above] at (1.5, 2.25) {$t_2$};
		\node[above] at (0.5,1.75) {$t_3$};
		\node[above] at (2.5, 1.25) {$t_4$};
		\node[above] at (1.5,0.75) {$t_5$};
		\node[above] at (0.5,0.25) {$t_6$};
		
		\node at (-1, 2.75) {row $1$};
		\node at (-1, 2.25) {row $2$};
		\node at (-1, 1.75) {row $3$};
		\node at (-1, 1.25) {row $4$};
		\node at (-1, 0.75) {row $5$};
		\node at (-1,0.25) {row $6$};
		\end{tikzpicture}\qquad\qquad
		\begin{tikzpicture}[scale = 0.7]
		\tikzset{every node/.style = {font = \footnotesize}}
		\tikzset{red line/.style = {line width=0.5ex, red, semitransparent}}
		\tikzset{blue line/.style = {line width=0.5ex, blue, semitransparent}}
		
		
		\draw (0,0)--(0,1)--(3,2.5)--(3,3);
		\draw[red] (1,0)--(1,0)--(3,1)--(3,2)--(2,2.5)--(2,3);
		\draw (2,0)--(2,0)--(1,0.5)--(1,1)--(0,1.5)--(0,2.5)--(1,3)--(1,3);
		\draw (3,0)--(3,0)--(3,0.5)--(2,1)--(2,1.5)--(1,2)--(1,2.5)--(0,3)--(0,3);

		\node[above] at (0.5, 2.75) {$t_1$};
		\node[above] at (2.5, 2.25) {$t_2$};
		\node[above] at (1.5, 1.75) {$t_3$};
		\node[above] at (0.5, 1.25) {$t_4$};
		\node[above] at (2.5, 0.75) {$t_5$};
		\node[above] at (1.5, 0.25) {$t_6$};

		\end{tikzpicture}
 \caption{The wiring diagrams $G(\mathbf{i})$ (left) and $G(\mathbf{j})$ (right)
   for $\mathbf{i}=(1,2,1,3,2,1)$ and $\mathbf{j}=(1,3,2,1,3,2)$ in
   $\RR(w_0^{(4)})$. The $3$rd wire in each wiring diagram is colored red. The
   crossing in row $j$ is labeled by $t_j$. The crossing $t_4$ in
   $G(\mathbf{i})$ (respectively, $G(\mathbf{j})$) is in column $3$
   (respectively column $1$).}
  \label{fig:WD}
\end{figure}

\item The \emph{$j$th row} of $G(\mathbf{i})$ is $\rho^{\ell-j}(G_{i_j})$. The points
$(j,\ell)$ and $(j,0)$ are called the \emph{$j$th starting point} and the
\emph{$j$th ending point} of $G(\mathbf{i})$, respectively. A \emph{wire} of
$G(\mathbf{i})$ is a path from a starting point to an ending point of
$G(\mathbf{i})$ obtained by taking the union of $\ell$ segments one from each
row. If a wire starts at the $j$th starting point, it is called the
\emph{$j$th wire} of $G(\mathbf{i})$.

\item We denote by $\WD(w)$ the set of wiring diagrams $G(\mathbf{i})$ for all
reduced words~$\mathbf{i}$ of~$w$.
\end{enumerate}
\end{definition}

By the definition of $G(\mathbf{i})$, it is clear that the $j$th wire is from
the $j$th starting point to the $w(j)$th ending point. One can reconstruct
$\mathbf{i}$ from $G(\mathbf{i})$ because the unique crossing in row $j$ of
$G(\mathbf{i})$ is in column $i_j$. This gives a bijection between $\RR(w)$ and
$\WD(w)$. We define the equivalence relation `$\sim$' on $\WD(w)$ by
$G(\mathbf{i})\sim G(\mathbf{j})$ if and only if $\mathbf{i}\sim\mathbf{j}$.
Equivalently, we have $G(\mathbf{i})\sim G(\mathbf{j})$ if and only if
$G(\mathbf{i})$ is obtained from $G(\mathbf{j})$ by a sequence of operations
exchanging two adjacent rows in which the crossings are not in adjacent columns.

Let $[\WD(w)]=\WD(w)/\sim$. Then we have an obvious bijection between $[\WD(w)]$
and $[\RR(w)]$ induced by the correspondence explained above.

\subsection{Word posets}

To study commutation classes, we associate a poset and a function on the poset to
each reduced word $\mathbf i \in \RR(w)$. 

\begin{definition}
  A \emph{word poset} is a poset $P$ together with a function $f_P:P\to \Z_{>0}$.
  Two word posets $P$ and $Q$ are \emph{isomorphic}, denoted by
  $P\sim Q$, if there is a poset isomorphism $\phi:P\to Q$ such that
  $f_P(x)= f_Q(\phi(x))$ for all $x\in P$.
\end{definition}

\begin{definition}
	Let $\mathbf i = (i_1,\dots,i_{\ell}) \in \RR(w)$. The \emph{word poset} of
  $\mathbf{i}$ is a poset $\Pi$ on~$[\ell]$ together with a function
  $\ffi:\Pi\to \Z_{>0}$ defined as follows:
  \begin{itemize}
  \item $\Pi$ is the transitive closure of the binary relation containing
    $(r,r)$ for $r\in[\ell]$ and $(j,k)$ for $j,k\in [\ell]$ with $|i_j - i_k| =
    1$ and $j < k$, and
  \item $\ffi(j) = i_j$ for all $j\in\Pi$.
  \end{itemize}
  Denote by $\P(w)$ the set of word posets $P$ such that $P\sim \Pi$ for some
 $\mathbf i  \in \RR(w)$. We also define $[\P(w)]=\P(w)/\sim$.
\end{definition}

We will see later in this subsection that word posets are closely related to
wiring diagrams.
Throughout this paper the following convention will be used when we draw the
Hasse diagram of a word poset.

\smallskip
\noindent\textbf{Convention.}
Let $P$ be a word poset. For $j\in P$, if $f_P(j)=i$, we say that $j$ \emph{is
  in column} $i$. When we draw the Hasse diagram of $P$ the element $j$ will be
placed in column $i$. For a subset $A$ of $P$, we say that $Q$ is the word poset
obtained from $P$ by \emph{shifting} $A$ to the left (respectively, right) by one
column if $P$ and $Q$ are the same as posets and $f_Q(x)=f_P(x)$ if $x\notin A$,
and $f_Q(x)=f_P(x)-1$ (respectively, $f_Q(x)=f_P(x)+1$) if $x\in A$.

\medskip

By definition, every covering relation in a word poset $P\in \P(w)$ occurs
between two adjacent columns. In other words, if $x\lessdot_P y$, then
$|f_P(x)-f_P(y)|=1$.

\begin{example}\label{example_poset}
	We illustrate some examples of the word posets $\Pi$ associated with
  some reduced words $\mathbf{i}=(i_1,\dots,i_\ell)$. We will arrange the
  elements of $\Pi$ so that $j$ is in column $\ffi(j)=i_j$.
	\begin{enumerate}
\item	Let $\mathbf i = (1,2,1,3,2,1)$. Then the Hasse diagram of the word poset
  $\Pi$ is shown below.
	\begin{center}
	\begin{tikzpicture}[auto,node distance=0.1cm and 0.5cm, inner sep=2pt]
\tikzstyle{state}=[draw, circle]
	\node[state] (1) {$6$};
	\node[state] (2) [below right = of 1] {$5$};
	\node[state] (3) [below left = of 2] {$3$};
	\node[state] (4) [below right = of 2] {$4$};
	\node[state] (5) [below left = of 4] {$2$};
	\node[state] (6) [below left= of 5] {$1$};
	
	\draw (1)-- (2)
	(2)--(3)
	(2)--(4)
	(3)--(5)
	(4)--(5)
	(5)--(6);
	
	\draw[dotted, gray] (6)--(3)--(1)--(0,0.5) node[above] {$1$};
	\draw[dotted, gray] (5)--(2)--(0.85,0.5) node[above] {$2$};
	\draw[dotted, gray] (4)--(1.7, 0.5) node[above] {$3$};

	\end{tikzpicture}
	\end{center}
  Here the elements $1,3,6$ are in column $1$, the elements $2,5$ are in column~$2$, and the element $4$ is in column $3$.
  Note that if $\mathbf{i}' = (1,2,3,1,2,1)$, then $\mathbf{i}'\sim \bf i$ and
  the word poset $P_{\mathbf{i}'}$ shown below is isomorphic to $\Pi$.
	\begin{center}
	\begin{tikzpicture}[auto,node distance=0.1cm and 0.5cm, inner sep=2pt]
\tikzstyle{state}=[draw, circle]
	\node[state] (1) {$6$};
	\node[state] (2) [below right = of 1] {$5$};
	\node[state] (3) [below left = of 2] {$4$};
	\node[state] (4) [below right = of 2] {$3$};
	\node[state] (5) [below left = of 4] {$2$};
	\node[state] (6) [below left= of 5] {$1$};
	
	\draw (1)-- (2)
	(2)--(3)
	(2)--(4)
	(3)--(5)
	(4)--(5)
	(5)--(6);
	
		\draw[dotted, gray] (6)--(3)--(1)--(0,0.5) node[above] {$1$};
	\draw[dotted, gray] (5)--(2)--(0.85,0.5) node[above] {$2$};
	\draw[dotted, gray] (4)--(1.7, 0.5) node[above] {$3$};
	\end{tikzpicture}
	\end{center}
\item	Let $\mathbf j = (1,3,2,1,3,2)$. Then the  Hasse diagram of the word  poset
  $\Pj$ is given as follows.  
\begin{center}
	\begin{tikzpicture}[auto,node distance=0.1cm and 0.5cm, inner sep=2pt]
\tikzstyle{state}=[draw, circle]
\node[state] (1) {$1$};
\node[state] (3) [above right = of 1] {$3$};
\node[state] (2) [below right = of 3] {$2$};
\node[state] (4) [above left = of 3] {$4$};
\node[state] (5) [above right = of 3] {$5$};
\node[state] (6) [above right = of 4] {$6$};

\draw (1)--(3)
(3)--(2)
(3)--(4)
(3)--(5)
(4)--(6)
(6)--(5);

	\draw[dotted, gray] (1)--(4)--(0,2) node[above] {$1$};
\draw[dotted, gray] (3)--(6)--(0.85,2) node[above] {$2$};
\draw[dotted, gray] (2)--(5)--(1.7, 2) node[above] {$3$};

\end{tikzpicture}
\end{center}
	\end{enumerate}
\end{example}

A \emph{linear extension} of a poset $P$ is a permutation $p_1p_2\dots p_n$
of the elements in~$P$ such that $j<k$ whenever $p_j<_P p_k$.

\begin{proposition}\label{prop_poset_determines_commutations_class}{\cite[Theorem~1.1]{Samuel11}}
	Let $\mathbf i, \mathbf j \in \RR(w)$. 
	\begin{enumerate}
  \item There is a bijection between the linear extensions of the poset $\Pi$
    and the elements in the commutation class $[\bf i]$ given as follows. A
    linear extension $p_1p_2\dots p_n$ of $\Pi$ corresponds to the word
    $(i_{\ffi(p_1)},i_{\ffi(p_2)},\dots,i_{\ffi(p_\ell)})$ in $[\mathbf{i}]$.

  \item We have $\bf i\sim \bf j$ if and only if $\Pi\sim \Pj$.
	\end{enumerate}
\end{proposition}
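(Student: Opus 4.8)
The statement to prove is Proposition~\ref{prop_poset_determines_commutations_class}, attributed to \cite[Theorem~1.1]{Samuel11}, which has two parts: (1) linear extensions of $\Pi$ biject with elements of the commutation class $[\mathbf{i}]$, and (2) $\mathbf{i}\sim\mathbf{j}$ iff $\Pi\sim\Pj$.

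\medskip

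\textbf{Proof proposal.}

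The plan is to prove part~(1) first and then deduce part~(2) as a consequence. For part~(1), I would set up the correspondence carefully and check it is well defined in both directions. Given a linear extension $p_1 p_2 \cdots p_\ell$ of $\Pi$, the associated word is $\mathbf{i}^{(p)} := (\ffi(p_1), \ffi(p_2), \dots, \ffi(p_\ell))$; note the indexing in the excerpt's formula should be read as producing the letter $\ffi(p_k) = i_{p_k}$ in position $k$. The first thing to verify is that $\mathbf{i}^{(p)}$ is again a reduced word for $w$ lying in $[\mathbf{i}]$. The key observation is that passing from the identity permutation $1 2 \cdots \ell$ (which is itself a linear extension of $\Pi$, recovering $\mathbf{i}$ itself) to an arbitrary linear extension can be done by a sequence of adjacent transpositions $p \leftrightarrow p'$ where $p, p'$ are consecutive in the current order and \emph{incomparable} in $\Pi$; incomparability forces $|\ffi(p) - \ffi(p')| \neq 1$, hence $|\ffi(p)-\ffi(p')|\ge 2$ (they cannot be equal since a reduced word of $w_0$—or more carefully, in general by the covering-relation structure, two equal-column elements are always comparable in $\Pi$ because of the chain of intermediate columns; I would state this as a small lemma: in $\Pi$, any two elements in the same column are comparable, and more generally $j<_\Pi k$ whenever $i_j = i_k$ and $j<k$). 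Thus each such transposition is exactly a $2$-move, so $\mathbf{i}^{(p)}\sim\mathbf{i}$. Conversely, I would check that the map is a bijection by exhibiting the inverse: given $\mathbf{j}\in[\mathbf{i}]$, one reconstructs a linear extension by tracking, through a chain of $2$-moves from $\mathbf{i}$ to $\mathbf{j}$, which original position each letter occupies. The well-definedness of this inverse—independence of the chosen chain of $2$-moves—is where one uses that $\Pi$ is the \emph{transitive closure} of the adjacent-column relation: a $2$-move never swaps two elements that are comparable in $\Pi$, and conversely any reordering respecting $\Pi$ is achievable. I would phrase this as: two words in $\RR(w)$ are $2$-move equivalent iff they induce the same partial order on their letter-occurrences refining $\Pi$, iff the induced total orders are both linear extensions of $\Pi$.

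The cleanest way to organize this is probably to prove the following equivalent reformulation: the set of words obtainable from $\mathbf{i}$ by $2$-moves is precisely $\{\mathbf{i}^{(p)} : p \text{ a linear extension of } \Pi\}$, and distinct linear extensions give distinct words. Injectivity (distinct linear extensions $\Rightarrow$ distinct words) is the subtle direction and requires knowing that $\Pi$ has no ``redundant'' incomparabilities—i.e., that the Hasse structure genuinely records the $2$-move freedom. This follows because if $p, p'$ are incomparable in $\Pi$ then $|i_p - i_{p'}|\ge 2$, so the two orders $\dots p p' \dots$ and $\dots p' p \dots$ genuinely correspond to words differing by a $2$-move at that spot, hence are both legitimate elements of $[\mathbf{i}]$ and (crucially) are recorded as different linear extensions because the positions of $p$ and $p'$ differ. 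The main obstacle I anticipate is precisely making airtight the claim that \emph{every} $2$-move equivalence between words in $\RR(w)$ is captured by reordering within linear extensions of the \emph{same} poset $\Pi$—one must rule out that a $2$-move could ever change the underlying poset, which amounts to showing that whenever $\mathbf{i}'\sim\mathbf{i}$ via a single $2$-move, the natural position-relabeling is a poset isomorphism $P_{\mathbf{i}'}\cong\Pi$ compatible with $f$. This is a direct but careful check: a $2$-move swaps letters $a, b$ with $|a-b|\ge 2$ in adjacent positions, and one verifies the adjacent-column covering relations, and hence their transitive closures, match up under the swap.

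Part~(2) then follows quickly. If $\mathbf{i}\sim\mathbf{j}$, then $\mathbf{j}=\mathbf{i}^{(p)}$ for some linear extension $p$ by part~(1), and the relabeling $p_k\mapsto k$ is a poset isomorphism $\Pi\to\Pj$ preserving the column function, so $\Pi\sim\Pj$ (iterating the single-$2$-move check above along a chain). Conversely, if $\phi:\Pi\to\Pj$ is an isomorphism with $\ffi = \ffj\circ\phi$, then reading $\mathbf{j}$'s elements in the order dictated by $\phi^{-1}$ of the identity extension of $\Pj$ exhibits $\mathbf{j}$ as $\mathbf{i}^{(p)}$ for the linear extension $p = \phi^{-1}(1)\phi^{-1}(2)\cdots$ of $\Pi$; one checks this is indeed a linear extension because $\phi$ is order-preserving, and then part~(1) gives $\mathbf{j}\sim\mathbf{i}$. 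I would keep the write-up short here since all the real content is in part~(1). Since this is an already-published result, I would also feel free to cite \cite{Samuel11} and only sketch the argument, but the above is the self-contained route.
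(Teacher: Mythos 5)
The paper does not actually prove this proposition: it is quoted verbatim with a citation to \cite[Theorem~1.1]{Samuel11}, followed only by a remark that part~(1) has analogues for partial commutation monoids in Knuth, Stanley, and Dorpalen-Barry--Kim--Reiner. So there is no in-paper argument to compare against, and your self-contained sketch supplies what the paper omits. Your outline is correct and is essentially the standard proof: (a) any two letter-occurrences in the same column are comparable in $\Pi$ (the paper proves exactly this for $w_0^{(n+1)}$ in Lemma~\ref{lem:chain}, and since the argument uses only reducedness it works for arbitrary $w$, as you need); hence incomparable elements lie in columns differing by at least $2$, an adjacent transposition of incomparable consecutive entries of a linear extension is exactly a $2$-move, and connectivity of the set of linear extensions under such transpositions gives that every $\mathbf{i}^{(p)}$ lies in $[\mathbf i]$ and, conversely, that a single $2$-move induces an isomorphism of word posets, which yields both the inverse map and part~(2). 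You also correctly read the statement's $i_{\ffi(p_k)}$ as the typo it is, for $i_{p_k}=\ffi(p_k)$. The one step you should tighten is injectivity: your argument only shows that two linear extensions differing by a \emph{single} adjacent transposition of incomparables map to different words, which does not by itself cover arbitrary distinct linear extensions. The clean fix: if $p\neq q$ are linear extensions with $\mathbf{i}^{(p)}=\mathbf{i}^{(q)}$, let $k$ be the first position where they disagree; then $\ffi(p_k)=\ffi(q_k)$, so $p_k$ and $q_k$ are in the same column and hence comparable, say $p_k<_{\Pi}q_k$; but $q$ must then place $p_k$ before $q_k$, i.e.\ among $q_1,\dots,q_{k-1}=p_1,\dots,p_{k-1}$, which is impossible. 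With that patch your sketch is a complete and correct proof.
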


We note that Proposition~\ref{prop_poset_determines_commutations_class}(1) has a
similar result on partial commutation monoids, see \cite[\S 5.1.2, Exercise
11]{knuth}, \cite[Exercise 3.123]{ec1}, or
\cite[Proposition~4.11]{DB-Kim-Reiner}.

By Proposition~\ref{prop_poset_determines_commutations_class}(2), we can identify
the commutation class $[\mathbf i]$ with $[\Pi]$.

\begin{proposition}\label{prop:i=P}
  The map $[\mathbf{i}]\mapsto [\Pi]$ is a bijection from $[\RR(w)]$ to
  $[\P(w)]$. 
\end{proposition}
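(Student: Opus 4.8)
The plan is to observe that the assignment $\mathbf{i}\mapsto\Pi$ already descends to a well-defined map $[\RR(w)]\to[\P(w)]$ and to then read off injectivity and surjectivity directly from Proposition~\ref{prop_poset_determines_commutations_class} and the definition of $\P(w)$. First I would check well-definedness: if $\mathbf{i}\sim\mathbf{j}$ in $\RR(w)$, then Proposition~\ref{prop_poset_determines_commutations_class}(2) gives $\Pi\sim\Pj$ as word posets, hence $[\Pi]=[\Pj]$ in $[\P(w)]$, so the value $[\Pi]$ does not depend on the representative of $[\mathbf{i}]$. It also lands in the correct target, since $\Pi\in\P(w)$ by the very definition of $\P(w)$.

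Next I would verify injectivity, which is just the reverse implication of the same equivalence: if $[\Pi]=[\Pj]$, i.e. $\Pi\sim\Pj$, then Proposition~\ref{prop_poset_determines_commutations_class}(2) yields $\mathbf{i}\sim\mathbf{j}$, so $[\mathbf{i}]=[\mathbf{j}]$ in $[\RR(w)]$. For surjectivity, I would unwind the definition of $\P(w)$: any class in $[\P(w)]$ has a representative $P\in\P(w)$, and by definition of $\P(w)$ there exists $\mathbf{i}\in\RR(w)$ with $P\sim\Pi$; therefore $[P]=[\Pi]$ is the image of $[\mathbf{i}]$ under the map. This exhausts the three required properties.

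The expected difficulty is essentially nil: all of the combinatorial content — that $2$-moves on reduced words correspond precisely to isomorphisms of word posets — is already packaged in Proposition~\ref{prop_poset_determines_commutations_class}(2). The only point requiring a little care is the notational bookkeeping, since the symbol `$\sim$' is used both for the commutation relation on $\RR(w)$ and for isomorphism of word posets; one must keep straight which is which when invoking Proposition~\ref{prop_poset_determines_commutations_class}(2) in each direction, and note that $\Pi$ is by construction an element of $\P(w)$ so that the map has the stated codomain.
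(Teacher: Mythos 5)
Your proposal is correct and matches the paper's (implicit) argument: the paper states this proposition without proof, treating it as an immediate consequence of Proposition~\ref{prop_poset_determines_commutations_class}(2) together with the definition of $\P(w)$, which is precisely the well-definedness/injectivity/surjectivity breakdown you give. Nothing is missing.
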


There is also a direct and natural correspondence between $[\WD(w)]$ and
$[\P(w)]$, which we now explain. Let $[D]\in [\WD(w)]$. We define the corresponding word
poset class $[P]\in[\P(w)]$ as follows.
\begin{itemize}
\item The elements of the underlying set $P$ are the
crossings in $D$. 
\item For two distinct elements $a,b\in P$, we have $a<_Pb$ if there is a
  downward path from the crossing $a$ to the crossing $b$ in $D$. Here, a
  downward path means a path following wires (it is allowed to switch between
  the two wires at a crossing) in the direction that the $y$-coordinate
  decreases.
\item For $a\in P$, define $f_P(a)=c$ if the crossing $a$ is in column $c$ in
  $D$.
\end{itemize}

For example, the wiring diagrams $G(\mathbf{i})$ and $G(\mathbf{j})$ in
Figure~\ref{fig:WD} correspond to the word posets $\Pi$ and $\Pj$ in
Example~\ref{example_poset}.

\begin{proposition}\label{prop:wiring}
  Let $w\in \mathfrak{S}_{n+1}$. Then the map $[D]\mapsto [P]$ described above
  is a bijection from $[\WD(w)]$ to $[\P(w)]$.
\end{proposition}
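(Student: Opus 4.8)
The claim is that the map $[D]\mapsto[P]$ from $[\WD(w)]$ to $[\P(w)]$ is a bijection. The plan is to factor this map through the known bijections already established in the excerpt, rather than verify it directly.

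First I would recall that we have a bijection $\RR(w)\leftrightarrow\WD(w)$ sending $\mathbf{i}\mapsto G(\mathbf{i})$, and this descends to a bijection $[\RR(w)]\leftrightarrow[\WD(w)]$ by the definition of $\sim$ on $\WD(w)$. We also have the bijection $[\mathbf{i}]\mapsto[\Pi]$ from $[\RR(w)]$ to $[\P(w)]$ from Proposition~\ref{prop:i=P}. So it suffices to check that the diagram commutes: if we start from $[\mathbf{i}]\in[\RR(w)]$, pass to $[G(\mathbf{i})]\in[\WD(w)]$, and then apply the geometric construction $[D]\mapsto[P]$, we land on $[\Pi]$. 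Since both $[G(\mathbf{i})]\mapsto[P]$ (to be shown) and $[\mathbf{i}]\mapsto[\Pi]$ are well-defined on classes and the source $[\RR(w)]\to[\WD(w)]$ is a bijection, commutativity of the diagram immediately gives that $[D]\mapsto[P]$ is a bijection. So the whole content reduces to one concrete verification for a fixed reduced word $\mathbf{i}$.

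Thus the key step is: fix $\mathbf{i}=(i_1,\dots,i_\ell)\in\RR(w)$ and show that the word poset $P$ attached to $D=G(\mathbf{i})$ by the crossing construction is isomorphic (as a word poset) to $\Pi$. The underlying sets match up by the observation already in the text that the unique crossing in row $j$ of $G(\mathbf{i})$ is in column $i_j$: this gives a bijection $j\mapsto(\text{crossing in row }j)$ between $[\ell]$ and the crossings of $D$, and it is compatible with the column functions, since $f_P(\text{crossing in row }j)=i_j=\ffi(j)$. The remaining point is that this bijection is an order isomorphism, i.e.\ that for $j<k$ there is a downward path in $D$ from the row-$k$ crossing to the row-$j$ crossing if and only if $j<_{\Pi}k$. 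For the direction ``$j\lessdot_{\Pi}k$ implies a downward path'': by definition of $\Pi$ this happens (in the generating relation) when $|i_j-i_k|=1$ and $j<k$ with no intermediate element forced between them; geometrically $|i_j-i_k|=1$ means the crossings in rows $j$ and $k$ occupy adjacent columns, so one of the two wires through the row-$k$ crossing, followed downward, reaches column $i_j$ or $i_{j\pm1}$ and meets the row-$j$ crossing — here one must argue that no crossing strictly between rows $j$ and $k$ sits on the relevant wire in a way that blocks the path, which is exactly the condition that makes $j\lessdot k$ a covering relation. For the converse, a downward path in $D$ only ever moves between adjacent columns at crossings (a wire changes column by $\pm1$ exactly at a crossing), so any downward path from the row-$k$ crossing to the row-$j$ crossing passes through a sequence of crossings in rows $k=r_0>r_1>\dots>r_m=j$ with consecutive column indices differing by $1$, which is precisely a chain $k>_{\Pi}r_1>_{\Pi}\dots>_{\Pi}j$ in the transitive closure defining $\Pi$.

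The main obstacle is this order-isomorphism verification, and within it the subtle direction is that a geometric downward path yields a chain in $\Pi$ and vice versa; one has to be careful that ``switching between the two wires at a crossing'' in the definition of downward path does not create order relations not present in $\Pi$, and that the transitive closure in the definition of $\Pi$ captures exactly the reachability relation among crossings. A clean way to handle this is to induct on $k-j$ (or on the number of rows of $D$), peeling off the top row: removing the top crossing of $G(\mathbf{i})$ corresponds to deleting the maximal-by-position element $i_1$ from $\mathbf{i}$ and the corresponding element of $\Pi$, and both the downward-path relation and the poset relation restrict compatibly. Since wires change columns only at crossings, the induction step amounts to checking the single new element, where the equivalence of ``adjacent column and nothing blocking'' with ``covering relation'' is essentially the definition. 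I do not expect any genuinely hard combinatorics here — the proposition is a compatibility statement — so I would present it as: establish the bijection of underlying sets and column-compatibility as above, prove the order-isomorphism by the downward-path/adjacent-column analysis, conclude $P\sim\Pi$, hence the square commutes, hence $[D]\mapsto[P]$ is a bijection.
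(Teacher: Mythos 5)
Your proposal is correct and follows essentially the same route as the paper: both reduce the statement to the already-established bijections of $[\WD(w)]$ and $[\P(w)]$ with $[\RR(w)]$ and then to the single verification that $[G(\mathbf{i})]\mapsto[P_{\mathbf{i}}]$, which the paper declares straightforward and omits while you sketch it (including well-definedness, which in your setup falls out of the commuting diagram rather than being checked separately as in the paper). The only quibble is a harmless orientation slip (your ``downward path from the row-$k$ crossing to the row-$j$ crossing'' for $j<k$ runs opposite to the paper's convention) and the fact that consecutive crossings on a wire of a reduced word's diagram may share a column rather than differ by one, which your transitive-closure argument absorbs anyway.
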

\begin{proof}
  We first show that the map $[D]\mapsto [P]$ is well defined. Suppose that $D'$
  is obtained from $D$ by exchanging two adjacent rows in which the crossings
  are not in adjacent columns. It is easy to see that a downward path from a
  crossing $a$ to a crossing $b$ in $D$ remains a downward path from $a$ to $b$
  in $D'$. This shows that the images of $[D]$ and $[D']$ under this map are
  identical. Thus the map is well defined.

  Now we show that the map is a bijection. Since $[\WD(w)]=\{[G(\mathbf{i})]:
  \mathbf{i}\in \RR(w)\}$ and $[\P(w)]=\{[P_{\mathbf{i}}]: \mathbf{i}\in
  \RR(w)\}$ both in bijection with $[\RR(w)]$, it suffices to show that
  $[G(\mathbf{i})]\mapsto[P_{\mathbf{i}}]$. This is straightforward to check
  by the construction of the map. We omit the details.
\end{proof}

\section{Contractions, extensions, and indices}
\label{secContractionsExtensionsAndIndices}

In this section, we define $\A$-, $\D$-, and $\delta$-indices of a word poset,
and two operations on word posets, called contractions and extensions.
We also provide some results on these objects which will be used in later sections.

From now on, we concentrate on the reduced words of the longest element
$w_0^{(n+1)}$ in the symmetric group $\mathfrak{S}_{n+1}$ and the word posets in
$\P(w_0^{(n+1)})$. In order to define indices, we prepare the following two
lemmas.

\begin{lemma}[{cf.~\cite[\S 2.2]{GlPo00}}]\label{lemma_properties_of_longest_elt}
	Let $\mathbf i \in \RR(w_0^{(n+1)})$. 
The wiring diagram $G(\mathbf i)$  has the following properties.
\begin{enumerate}
  \item The $i$th wire travels from the $i$th starting point to the $(n+2-i)$th
    ending point for $1\le i\le n+1$.
	\item Two different wires meet exactly once.
  \item Every wire has exactly $n$ crossings.
  \item Let $c_1,\dots,c_n$ be the crossings that lie on the $1$st wire
    \textup{(}respectively, $(n+1)$st wire\textup{)} of $G(\mathbf{i})$ in this order, i.e., the
    row of $c_{i+1}$ is lower than the row of $c_i$. Then each $c_i$ is in
    column $i$ \textup{(}respectively, $n+1-i$\textup{)}.
\end{enumerate}
\end{lemma}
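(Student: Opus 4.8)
The plan is to derive all four statements from a single elementary count. A reduced word of $w_0^{(n+1)}$ has length $\barn$, so $G(\mathbf i)$ has exactly $\barn$ crossings, one in each of its $\barn$ rows; together with the endpoint data this pins down the crossing pattern completely. Part (1) needs no work beyond the observation recorded right after the definition of $G(\mathbf i)$, namely that the $j$th wire runs from the $j$th starting point to the $w(j)$th ending point: for $w=w_0^{(n+1)}$ one has $w_0^{(n+1)}(j)=n+2-j$.

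For (2), I would count crossings in two ways. There are $n+1$ wires, hence $\binom{n+1}{2}$ unordered pairs of them; for a pair $i<j$ let $m_{ij}$ be the number of crossings involving both wires $i$ and $j$, so that $\sum_{i<j}m_{ij}$ equals the total number of crossings, namely $\barn=\binom{n+1}{2}$. By (1), wire $i$ begins to the left of wire $j$ (as $i<j$) and ends to its right (as $n+2-i>n+2-j$), so the two wires cross an odd number of times and $m_{ij}\ge 1$. Since $\binom{n+1}{2}$ pairs each contribute at least $1$ to a sum equal to $\binom{n+1}{2}$, we must have $m_{ij}=1$ for every pair, which is (2). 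Then (3) is immediate: the crossings on a fixed wire are exactly its crossings with the other $n$ wires, one with each, so that wire carries precisely $n$ crossings.

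For (4), I would track the horizontal position of the $1$st wire as it descends. Between two consecutive crossings lying on it the wire occupies a single column, and at each such crossing its column changes by exactly $\pm1$ (a crossing in column $c$ interchanges the wires in positions $c$ and $c+1$). Let $p_0=1$ be its initial column and $p_1,\dots,p_n$ the columns after its successive crossings $c_1,\dots,c_n$ (there are $n$ by (3)); by (1) the wire ends at the $(n+1)$st ending point, so $p_n=n+1$. Since $p_n-p_0=n$ is a sum of $n$ steps each equal to $\pm1$, every step equals $+1$, hence $p_k=k+1$ and $c_k$, which moves the wire from column $k$ to column $k+1$, lies in column $k$. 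The same argument applied to the $(n+1)$st wire, which starts in column $n+1$ and, by (1), ends at the $1$st ending point, gives a walk from $p_0=n+1$ to $p_n=1$ with $n$ steps of size $\pm1$, forcing every step to be $-1$; thus $c_k$ moves the wire from column $n+2-k$ to column $n+1-k$ and therefore lies in column $n+1-k$.

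The points requiring a little care are routine: that each of the $\barn$ rows of $G(\mathbf i)$ contributes exactly one crossing; the standard planar fact that two paths whose four endpoints alternate in left--right order must cross an odd number of times (used in (2)); and, in (4), that a wire's column is constant between the crossings on it and jumps by $\pm1$ at each of them. I do not expect a genuine obstacle here — the main thing to keep straight is the distinction between the column of a crossing and the positions of the two wires meeting there.
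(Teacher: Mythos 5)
Your argument is correct and complete. The paper itself gives no proof of this lemma --- it is quoted as a known fact with a pointer to Gleizer--Postnikov --- so there is no in-paper argument to compare against; your write-up supplies the standard one. The two key steps both check out: the double count in (2) uses that each of the $\barn=\binom{n+1}{2}$ rows contributes exactly one crossing involving exactly two wires, together with the parity observation that wires $i<j$ start in the order $i,j$ and end in the order $n+2-j<n+2-i$, forcing $m_{ij}\ge 1$ and hence $m_{ij}=1$ by the pigeonhole on the total; and the monotone-walk argument in (4) correctly uses that a net displacement of $\pm n$ over $n$ unit steps forces every step to have the same sign, which pins each crossing $c_k$ to column $k$ (respectively $n+1-k$), consistent with the paper's convention that the crossing in column $c$ swaps positions $c$ and $c+1$. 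Part (1) is indeed immediate from the remark following the definition of $G(\mathbf i)$ together with $w_0^{(n+1)}(j)=n+2-j$, and (3) follows from (2) as you say. No gaps.
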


\begin{lemma}\label{lem_index}
  Let $\mathbf{i}=(i_1,\dots, i_{\bar n})\in\Rn{n+1}$.
  Then there are unique integers $d_1,\dots,d_n$ such that
  \begin{equation}\label{eq_D}
	1 \leq d_1 < \dots < d_n \leq \bar{n} , \quad \quad (i_{d_1}, \dots, i_{d_n})= (n,n-1, \dots, 2, 1),
  \end{equation}
  and there are unique integers $a_1,\dots,a_n$ such that
  \begin{equation}\label{eq_A}
	1 \leq a_1 < \dots < a_n \leq \bar{n} , \quad \quad (i_{a_1}, \dots, i_{a_n})= (1,2, \dots,n-1, n).
  \end{equation}
 Moreover, $|\{a_1,\dots,a_n\}\cap\{d_1,\dots,d_n\}|=1$.
\end{lemma}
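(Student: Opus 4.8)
The plan is to prove the existence and uniqueness of the subsequences realizing $(n,n-1,\dots,1)$ and $(1,2,\dots,n)$ by a greedy "leftmost occurrence" argument, and then to establish the intersection claim via the wiring diagram. For the decreasing word: I would extract the leftmost copy of $n$ in $\mathbf i$ (it exists because $n$ appears), then the leftmost copy of $n-1$ occurring after that position, and so on, obtaining positions $d_1 < \dots < d_n$. One must check two things: (i) this greedy process does not run out of letters, and (ii) the resulting subsequence is the \emph{unique} one of the form $(n,n-1,\dots,1)$. Both follow from the structure of reduced words of $w_0^{(n+1)}$: by Lemma~\ref{lemma_properties_of_longest_elt}(4), the crossings on the $1$st wire occur in columns $1,2,\dots,n$ from top to bottom, which after reversing says the letters $n, n-1, \dots, 1$ appear in $\mathbf i$ in exactly the row-order of the crossings on the $1$st wire. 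I would argue that these are precisely the rows where the $1$st wire participates in a crossing, hence the subsequence $(i_{d_1},\dots,i_{d_n}) = (n,n-1,\dots,1)$ is canonically the list of columns of the crossings on the $1$st wire, read from bottom to top; uniqueness then amounts to the observation that any decreasing subsequence $(n,n-1,\dots,1)$ must, crossing by crossing, be forced to lie on this wire, since between a crossing in column $j$ on the first wire and the next letter $j-1$ there is no other available $j-1$ that is consistent with reducedness (two wires cross at most once, Lemma~\ref{lemma_properties_of_longest_elt}(2)). Symmetrically, the increasing subsequence $(1,2,\dots,n)$ is the list of columns of the crossings on the $(n+1)$st wire read from \emph{top to bottom} (again by Lemma~\ref{lemma_properties_of_longest_elt}(4)), and the same argument gives existence and uniqueness.

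For the final cardinality statement $|\{a_1,\dots,a_n\}\cap\{d_1,\dots,d_n\}|=1$: having identified $\{d_1,\dots,d_n\}$ with the crossings on the $1$st wire and $\{a_1,\dots,a_n\}$ with the crossings on the $(n+1)$st wire, a common position corresponds to a crossing that lies on \emph{both} the $1$st and the $(n+1)$st wire, i.e., the unique crossing where wire $1$ meets wire $n+1$. By Lemma~\ref{lemma_properties_of_longest_elt}(2) any two distinct wires meet exactly once, so there is exactly one such crossing, giving intersection of size exactly $1$. I would also note the consistency check that this crossing is in column $n$ from the decreasing-sequence viewpoint (it is the first letter $i_{d_1}=n$, the top crossing on wire $1$) and in column $n$ from the increasing-sequence viewpoint (it is the last letter $i_{a_n}=n$, the bottom crossing on wire $n+1$); indeed the top crossing of the $1$st wire and the bottom crossing of the $(n+1)$st wire are both the unique crossing of wires $1$ and $n+1$, confirming $d_1 = a_n$ and hence this common index is precisely $d_1=a_n$.

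An alternative, more elementary route avoiding wiring diagrams: prove existence/uniqueness by induction on $n$ using the fact that in any $\mathbf i \in \RR(w_0^{(n+1)})$ the letter $n$ appears exactly once (since the $1$st and $(n+1)$st wires cross exactly once and that is the only crossing in column $n$ — wait, this needs care, as $n$ can appear more than once in general; rather, one uses that deleting all letters equal to $1$, or peeling off a suitable prefix/suffix, reduces to $\RR(w_0^{(n)})$). I would likely present the wiring-diagram proof as the main argument since it makes the "exactly one common position" transparent, and the hardest part is the uniqueness of the two monotone subsequences — specifically, ruling out a "stray" decreasing subsequence that jumps off the $1$st wire and back on. The key lemma there is that if $i_p = i_q = j$ with $p<q$ and the subsequence uses both as the "$j$-step", then between rows $p$ and $q$ the relevant wires would have to cross twice, contradicting Lemma~\ref{lemma_properties_of_longest_elt}(2); making this precise is the one place requiring a careful look at how wires move between consecutive columns, so I expect that to be the main obstacle.
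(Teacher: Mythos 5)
Your overall strategy---identify the two monotone subsequences with the crossings on the two extreme wires of $G(\mathbf{i})$, and get the intersection statement from the fact that two wires meet exactly once---is exactly the paper's, and your intersection argument is correct. But there are two genuine problems. First, you have the two wires swapped, and not merely cosmetically: under the paper's conventions (row $1$ at the top, the crossing in row $j$ in column $i_j$), Lemma~\ref{lemma_properties_of_longest_elt}(4) says the crossings of the \emph{first} wire read from top to bottom sit in columns $1,2,\dots,n$, so they give the \emph{increasing} subsequence $(a_1<\dots<a_n)$, while the $(n+1)$st wire gives the decreasing one. Your description of the decreasing subsequence as the first wire ``read from bottom to top'' cannot be right as stated, since reading bottom to top reverses the row indices and would produce positions $d_1>\dots>d_n$, violating \eqref{eq_D}. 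Relatedly, your closing ``consistency check'' that the common index is $d_1=a_n$ (a crossing in column $n$) is false: for $\mathbf{j}=(1,3,2,1,3,2)\in\Rn{4}$ one has $\{a_1,a_2,a_3\}=\{1,3,5\}$ and $\{d_1,d_2,d_3\}=\{2,3,4\}$, so the common index is $3=a_2=d_2$, a crossing in column $2$. The unique crossing of wires $1$ and $n+1$ can occur in any column $s$, in which case the common index is $a_s=d_{n+1-s}$; the top crossing of wire $1$ is its crossing with whichever wire it meets first, which need not be wire $n+1$.

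Second, and more seriously, the uniqueness of the monotone subsequences---which you yourself flag as ``the main obstacle''---is left unproved, and the idea you sketch (``the relevant wires would have to cross twice'') is not developed into an argument. The paper treats this as the main content of the proof and handles it with a minimal-counterexample induction: if $d_1'<\dots<d_n'$ were a second sequence satisfying \eqref{eq_D} and $k$ is minimal with $d_k\ne d_k'$, then because the $(n+1)$st wire passes through the crossings in rows $d_{k-1}$, $d_k$, $d_{k+1}$ (in columns $n+2-k$, $n+1-k$, $n-k$), it occupies strand $n+2-k$ or $n+1-k$ throughout that range, so there is no crossing in column $n+1-k$ in any row strictly between $d_{k-1}$ and $d_{k+1}$ other than the one in row $d_k$; hence $d_{k+1}<d_k'$, and iterating gives $d_n<d_{n-1}'<d_n'$, contradicting that the crossing in row $d_n$ is the lowest crossing in column $1$. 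Some argument of this precision is needed: existence is immediate (the wire exhibits one such subsequence), but uniqueness is what the definition of the indices in the rest of the paper actually relies on, and your proposal does not establish it.
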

\begin{proof}
  We will use the wiring diagram $G(\mathbf{i})$ of $\mathbf{i}$. Let
  $d_1<\dots<d_n$ be the row indices of the crossings in the $(n+1)$st wire of
  $G(\mathbf{i})$. Then, by Lemma~\ref{lemma_properties_of_longest_elt},
  $d_1,\dots,d_n$ satisfy \eqref{eq_D}. Similarly, let $a_1<\dots<a_n$ be the
  row indices of the crossings in the $1$st wire of $G(\mathbf{i})$. Then, by
  Lemma~\ref{lemma_properties_of_longest_elt}, $a_1,\dots,a_n$ satisfy
  \eqref{eq_A}. Since the $1$st and the $(n+1)$st wires meet exactly once, we
  get $|\{a_1,\dots,a_n\}\cap\{d_1,\dots,d_n\}|=1$. It remains to show that
  these integers are unique.

  To show the uniqueness of $d_1,\dots,d_n$ satisfying \eqref{eq_D},
  suppose that $d_1',\dots,d_n'$ are integers such that $d_j\ne d_j'$ for some
  $j$ and
\[
	1 \leq d_1' < \dots < d_n' \leq \bar{n} , \quad \quad (i_{d_1'}, \dots, i_{d_n'})= (n,n-1, \dots, 2, 1).
\]
Let $k$ be the smallest integer such that $d_{j} = d_{j}'$ for $j< k$ and $d_k \neq d_k'$.
Since the $(n+1)$st wire passes through the crossing in row $d_{k-1}$ and column
$n+2-k$, the crossing in row $d_{k}$ and column $n+1-k$, and the crossing in row
$d_{k+1}$ and column~$n-k$, there is no crossing in row $j$ and column $n+1-k$
for all $d_{k-1}<j<d_{k+1}$ with $j\ne d_k$. Since $d_{k-1}=d_{k-1}'<d_k'$, this
shows $d_{k+1} < d_k'$. See Figure~\ref{fig:dk}.

By the same argument, we can deduce that $d_{j+1}<d_j'$ for
$j=k,k+1,\dots,n-1$. In particular we have $d_n<d_{n-1}'<d_n'$. However, by
the definition of $d_n$, the crossing in row $d_n$ and column $1$ is the lowest
crossing in this column, which is a contradiction to $d_n<d_n'$. This shows that
there are no such integers $d_1',\dots,d_n'$, so the uniqueness of
$d_1,\dots,d_n$ is proved.

Similarly, we can show the uniqueness of $a_1,\dots,a_n$, and the proof is
completed.
\end{proof}

\begin{figure}
  \centering
		\begin{tikzpicture}[scale = 0.7]
\tikzset{every node/.style = {font = \footnotesize}}
\tikzset{red line/.style = {line width=0.5ex, red, semitransparent}}
\tikzset{blue line/.style = {line width=0.5ex, blue, semitransparent}}


\draw[red] (1,0)--(1,0.5)--(2,1)--(2,2)--(3,2.5)--(3,5)--(4,5.5)--(4,7);
\draw (3,5.5)--(4,5);
\draw (2,2.5)--(3,2);
\draw (1,1)--(2,0.5);

\draw (2,-1)--(3,-1.5);
\draw (3,-1)--(2,-1.5);

\draw[gray,dotted] (-1,5.25)--(3.5,5.25) node[at start, left,  color=black]  {row $d_{k-1} = d_{k-1}'$};
\draw[gray, dotted] (-1,2.25)--(2.5,2.25) node[at start, left, color = black] {row $d_k$};
\draw[gray, dotted] (-1, 0.75)--(1.5,0.75) node[at start, left, color = black] {row $d_{k+1}$};
\draw[gray, dotted] (-1, -1.25)--(2.5,-1.25) node[at start, left, color=black] {row $d_k'$};

\draw[gray,dotted] (2.5,-1.25)--(2.5,7) node[at end, above, color=black] {column $n+1-k$};
\end{tikzpicture}
 \caption{An illustration of a part of the $(n+1)$st wire (colored in red) in a wiring diagram and the
   crossings in rows $d_{k-1},d_k,d_{k+1}$, and $d_k'$.}
  \label{fig:dk}
\end{figure}

The previous lemma can be restated in terms of word posets as follows.

\begin{proposition}\label{prop_index}
  Let $P\in\P(w_0^{(n+1)})$. Then $P$ contains a unique chain
  $\D(P)$ such that
\[
\D(P) = \{d_1<_{P} d_{2}<_{P} \dots <_{P} d_n\}
\]
and $f_P(d_i)=n+1-i$ for all $1\le i \le n$. Similarly, $P$ contains a unique
chain $\A(P)$ such that
\[
\A(P) = \{a_1<_P a_{2}<_P \dots <_P a_{n}\}
\]
and $f_P(a_i)=i$ for all $1\le i \le n$. Moreover, we have $|\D(P) \cap \A(P)| = 1$.
\end{proposition}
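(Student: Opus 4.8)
The plan is to deduce Proposition~\ref{prop_index} directly from Lemma~\ref{lem_index} by using the correspondence between word posets and reduced words. Since $P\in\P(w_0^{(n+1)})$, by Proposition~\ref{prop:i=P} we may pick a reduced word $\mathbf{i}=(i_1,\dots,i_{\bar n})\in\RR(w_0^{(n+1)})$ with $P\sim P_{\mathbf i}$, and it suffices to prove the claim for $P=P_{\mathbf i}$, since the statement is invariant under word-poset isomorphism (an isomorphism preserves both the partial order and the function $f_P$, hence carries chains of the required type to chains of the required type).

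First I would establish existence. Applying Lemma~\ref{lem_index} to $\mathbf i$, we obtain indices $d_1<\dots<d_n$ with $(i_{d_1},\dots,i_{d_n})=(n,n-1,\dots,1)$. I claim the subset $\{d_1,\dots,d_n\}\subseteq[\bar n]=P_{\mathbf i}$ forms the desired chain: by definition of $f_{\mathbf i}$ we have $f_{\mathbf i}(d_i)=i_{d_i}=n+1-i$, and since $|i_{d_i}-i_{d_{i+1}}|=|(n+1-i)-(n-i)|=1$ with $d_i<d_{i+1}$, the defining relations of the word poset force $d_i<_{P_{\mathbf i}}d_{i+1}$; taking the transitive closure gives a chain $d_1<_{P_{\mathbf i}}\dots<_{P_{\mathbf i}}d_n$. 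This is exactly $\D(P_{\mathbf i})$. Symmetrically, the $a_i$'s from \eqref{eq_A} give the chain $\A(P_{\mathbf i})$ with $f_{\mathbf i}(a_i)=i$. The equality $|\D(P_{\mathbf i})\cap\A(P_{\mathbf i})|=1$ is then immediate from the last sentence of Lemma~\ref{lem_index}.

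The main obstacle is uniqueness, since a word poset can have many reduced words in its commutation class, and a chain in $P$ with the prescribed column labels is a purely order-theoretic object that need not a priori come from the particular linearization $\mathbf i$ we fixed. So suppose $\{d_1'<_P\dots<_P d_n'\}$ is another chain in $P=P_{\mathbf i}$ with $f_P(d_i')=n+1-i$. Using Proposition~\ref{prop_poset_determines_commutations_class}(1), extend this chain to a linear extension of $P$; the corresponding reduced word $\mathbf{i}'\sim\mathbf i$ then has $(i'_{e_1},\dots,i'_{e_n})=(n,n-1,\dots,1)$ for the positions $e_1<\dots<e_n$ occupied by $d_1',\dots,d_n'$ in that linear extension. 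By the uniqueness part of Lemma~\ref{lem_index} applied to $\mathbf{i}'$, the index set $\{e_1,\dots,e_n\}$ is the unique one with this property for $\mathbf{i}'$; but one must still translate back to conclude $\{d_1',\dots,d_n'\}=\{d_1,\dots,d_n\}$ as a subset of $P$. The cleanest way to close this is to argue directly at the level of wiring diagrams: under the bijection of Proposition~\ref{prop:wiring}, $P$ corresponds to a class $[G(\mathbf i)]$, and the chain $\D(P)$ consists precisely of the crossings lying on the $(n+1)$st wire (by Lemma~\ref{lemma_properties_of_longest_elt}(4), these are in columns $n,n-1,\dots,1$ from top to bottom, and any two successive ones are joined by a downward path along that wire, hence comparable). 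Since the $(n+1)$st wire is an intrinsic feature of $[G(\mathbf i)]$, and Lemma~\ref{lem_index} already shows no other increasing sequence of rows realizes the column pattern $(n,\dots,1)$, the chain is unique. The same argument with the $1$st wire handles $\A(P)$, and $|\D(P)\cap\A(P)|=1$ restates that the $1$st and $(n+1)$st wires cross exactly once (Lemma~\ref{lemma_properties_of_longest_elt}(2)). I would likely present the wiring-diagram version as the main proof, as it makes both existence and uniqueness transparent simultaneously.
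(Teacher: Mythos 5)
Your proof is correct and follows essentially the same route as the paper, whose own proof simply observes that via Proposition~\ref{prop:i=P} the statement for $P_{\mathbf i}$ is a reformulation of Lemma~\ref{lem_index}. Your detour through linear extensions and wiring diagrams to settle uniqueness is valid but unnecessary: since $j<_{P_{\mathbf i}}k$ forces $j<k$ in $\Z$, any chain in $P_{\mathbf i}$ with the prescribed column labels already yields an increasing integer sequence with $(i_{d_1'},\dots,i_{d_n'})=(n,n-1,\dots,1)$, so the uniqueness clause of Lemma~\ref{lem_index} applies to $\mathbf i$ directly.
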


\begin{proof}
	Let $P\in\P(w_0^{(n+1)})$. By Proposition~\ref{prop:i=P}, there exists
  $\mathbf i \in \RR(w_0^{(n+1)})$ satisfying~$P_{\mathbf i}\sim P$.
  Accordingly, it is enough to prove the statements for the word poset
  $P_{\mathbf i}$ for an arbitrary $\mathbf{i}=(i_1,\dots, i_{\bar
    n})\in\Rn{n+1}$. Then, using the map in Proposition~\ref{prop:i=P}, the
  statements for $\Pi$ that we need to prove can be reformulated as
  the statements for $\mathbf{i}$ in Lemma~\ref{lem_index}. Therefore
  the proof follows from this lemma.
\end{proof}

We call $\D(P)$ the \emph{descending chain} of $P$ and $\A(P)$ the
\emph{ascending chain} of $P$. We now define an important notion in this paper
called indices.

\begin{definition}\label{def_index}
  Let $P\in\P(w_0^{(n+1)})$. The \defi{$\D$-index} of $P$, denoted by
  $\ind_{\D}(P)$, is the number of elements in $P$ above the descending chain
  $\D(P)$ in the Hasse diagram of $P$. Similarly, the \defi{$\A$-index} of $P$,
  denoted by $\ind_{\A}(P)$, is the number of elements in $P$ above the
  ascending chain in the Hasse diagram of $P$. More precisely, if $\D(P) =
  \{d_1<_P d_{2}<_P \dots <_P d_n\}$ and $\A(P) = \{a_1<_P a_{2}<_P \dots <_P
  a_{n}\}$, then
  \begin{align*}
   \ind_{\D}(P) &= \sum_{i=1}^n \#\{k\in[\barn]: k>_P d_i \mbox{ and } f_P(k)=f_P(d_i) \},\\
   \ind_{\A}(P) &= \sum_{i=1}^n \#\{k\in[\barn]: k>_P a_i \mbox{ and } f_P(k)=f_P(a_i) \}.
  \end{align*}
  For $\mathbf{i}\in\Rn{n+1}$, we also define
\[
\ind_{\D}(\mathbf{i}) =  \ind_{\D}(\Pi), \qquad
\ind_{\A}(\mathbf{i}) =  \ind_{\A}(\Pi).
\]
\end{definition}

Note that in the above definition the summand $\#\{k\in[\barn]: k>_{P} d_i \mbox{
  and } f_P(k)=f_p(d_i) \}$ is the number of elements of $P$ above the element
$d_i$ of $\D(P)$ in column $f_P(d_i)=n+1-i$ in the Hasse diagram of $P$.

In \cite[Definition 3.4]{CKLP} the indices $\ind_{\D}(\mathbf{i})$ and
$\ind_{\A}(\mathbf{i})$ of $\mathbf{i}\in\Rn{n+1}$ are defined without using
word posets. It is not hard to see that the two definitions are equivalent.

\begin{example}\label{example_index_132132}
	Continuing Example~\ref{example_poset}, let $\mathbf i = (1,2,1,3,2,1)$ and
  $\mathbf j = (1,3,2,1,3,2)$. Then we have that
  \[
  \begin{split}
  \A(P_{\mathbf i}) = 1 <_{P_{\mathbf i}} 2 <_{P_{\mathbf i}} 4, \quad \D(P_{\mathbf i}) = 4 < _{P_{\mathbf i}}5 <_{P_{\mathbf i}} 6, \\
  \A(P_{\mathbf j}) = 1 <_{P_{\mathbf j}} 3 <_{P_{\mathbf j}} 5, \quad \D(P_{\mathbf j}) = 2 <_{P_{\mathbf j}} 3 <_{P_{\mathbf j}} 4.
  \end{split}
  \]
  The chains $\D(P)$ and $\A(P)$ for $P=\Pi$
  (left) and $P=\Pj$ (right) are shown as follows.
	\begin{center}
\begin{tikzpicture}[auto,node distance=0.1cm and 0.5cm, inner sep=2pt,
halo/.style={line join=round,
	double,line cap=round,double distance=#1,shorten >=-#1/2,shorten <=-#1/2},
halo/.default=7mm]
\tikzstyle{state}=[draw, circle]
\node[state, fill=blue!20] (1) {$6$};
\node[state, , fill=blue!20] (2) [below right=of 1] {$5$};
\node[state] (3) [below left =of 2] {$3$};
\node[state, very thick, draw=red, fill=blue!20, very thick, draw=red] (4) [below right =of 2] {$4$};
\node[state, very thick, draw=red] (5) [below left =of 4] {$2$};
\node[state, very thick, draw=red] (6) [below left =of 5] {$1$};

\draw (1)-- (2)
(2)--(3)
(2)--(4)
(3)--(5)
(4)--(5)
(5)--(6);

\draw[very thick, red] (4)--(5)--(6) node [midway, below=0.5cm] {$\A(P)$};

\node (4above) [below of = 4, node distance =0.55cm] {};
\node (1below) [above of =1, node distance = 0.55cm] {};


		\draw[dotted, gray] (6)--(3)--(1)--(0,0.5) node[above] {$1$};
\draw[dotted, gray] (5)--(2)--(0.85,0.5) node[above] {$2$};
\draw[dotted, gray] (4)--(1.7, 0.5) node[above] {$3$};

    \draw[very thick, dashed, blue] (1)--(2)--(4)  node[above=0.5cm] {$\D(P)$};

\end{tikzpicture}
\hspace{2cm}
\begin{tikzpicture}[auto,node distance=0.1cm and 0.5cm, inner sep=2pt,
halo/.style={line join=round,
	double,line cap=round,double distance=#1,shorten >=-#1/2,shorten <=-#1/2},
halo/.default=7mm]
\tikzstyle{state}=[draw, circle]

\tikzstyle{state}=[draw, circle]
\node[state,very thick, draw=red] (1) {$1$};
\node[state, very thick, draw=red, , fill=blue!20] (3) [above right = of 1] {$3$};
\node[state,  fill=blue!20] (2) [below right = of 3] {$2$};
\node[state, fill=blue!20] (4) [above left = of 3] {$4$};
\node[state,very thick, draw=red] (5) [above right = of 3] {$5$};
\node[state] (6) [above right = of 4] {$6$};

\draw (1)--(3)
(3)--(2)
(3)--(4)
(3)--(5)
(4)--(6)
(6)--(5);

\node (2above) [below  of = 2, node distance =0.55cm] {};
\node (4below) [above of =4, node distance = 0.55cm] {};

\draw[very thick, red] (1)--(3)--(5);
\node [below of = 1, node distance = 0.55cm, red] {$\A(P)$};

\draw[dotted, gray] (1)--(4)--(0,2) node[above] {$1$};
\draw[dotted, gray] (3)--(6)--(0.85,2) node[above] {$2$};
\draw[dotted, gray] (2)--(5)--(1.7, 2) node[above] {$3$};

    \draw[very thick, dashed, blue] (4)--(3)--(2)  node[right=0.3cm] {$\D(P)$};

\end{tikzpicture}
\end{center}
Counting the number of elements above $\A(P)$ and $\D(P)$, we obtain
	\[
	\ind_\D(\mathbf{i})=0,\quad \ind_\A(\mathbf{i}) = 3, \quad
	\ind_\D(\mathbf{j}) = 2,\quad \ind_\A(\mathbf{j}) = 2.
	\]
\end{example}

The following lemma shows that the elements of $P\in\P(w_0^{(n+1)})$ in each
column form a chain.

\begin{lemma}\label{lem:chain}
  Let $P\in\P(w_0^{(n+1)})$. Then $\{ x\in P: f_P(x) = i\}$ is a chain in $P$
  for each $i\in[n]$.
\end{lemma}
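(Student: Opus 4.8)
The statement is that for $P \in \P(w_0^{(n+1)})$, the fiber $\{x \in P : f_P(x) = i\}$ is a chain for each $i \in [n]$. The plan is to work with the wiring diagram $D = G(\mathbf{i})$ for a reduced word $\mathbf{i}$ with $P_{\mathbf{i}} \sim P$, so that the elements with $f_P$-value $i$ correspond exactly to the crossings of $D$ lying in column $i$, and the order relation $<_P$ is "there is a downward path." I want to show that any two crossings $a, b$ in column $i$ are comparable, i.e., one can be reached from the other by a downward path.

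Here is the key geometric input. A crossing in column $i$ of a wiring diagram is the unique intersection point of two wires that occupy positions $i$ and $i+1$ at that row — in other words, at that crossing the two wires currently in the $i$-th and $(i+1)$-th columns of the diagram swap. Suppose $a$ is above $b$ (strictly larger row index), both in column $i$. Let $\{p, q\}$ be the pair of wires crossing at $a$ and $\{r, s\}$ the pair crossing at $b$. Just below the row of $a$, the two wires $p, q$ occupy positions $i$ and $i+1$. Just below the row of $b$, the wires $r, s$ occupy positions $i$ and $i+1$. By Lemma~\ref{lemma_properties_of_longest_elt}(2), any two distinct wires cross exactly once, so I can track which wires sit in columns $i$ and $i+1$ as the row index decreases from (just below) $a$ down to (just below) $b$: the set of two wires occupying $\{i,i+1\}$ can only change at a crossing that happens in column $i-1$, $i$, or $i+1$ in an intervening row. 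A crossing in column $i$ between rows of $a$ and $b$ would contradict... no — actually it need not; let me instead argue: consider the first row, going downward from $a$, at which the pair of wires in positions $\{i, i+1\}$ changes. This change happens either by a crossing in column $i-1$ (which swaps the wire in position $i$ with the wire in position $i-1$) or a crossing in column $i+1$ (swapping position $i+1$ with position $i+2$), or it happens at a crossing in column $i$ itself. The plan is to show that by the time we reach the row of $b$, at least one of the two wires through $a$ must have been involved in a crossing lying in column $i$ on the way down, and the first such crossing, say $c$, is reachable from $a$ by a downward path (follow that wire); then repeat the argument from $c$ down to $b$. Since the number of crossings in column $i$ is finite, this terminates, proving $a >_P b$, i.e., the fiber is totally ordered.

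The cleaner way to organize this: define, for each row $\rho$ (between $0$ and $\barn$), the unordered pair $W(\rho) \subseteq [n+1]$ of wires occupying horizontal positions $i$ and $i+1$ immediately above row $\rho$. At the top ($\rho = \barn$) we have $W = \{i, i+1\}$ (the starting points), and $W$ changes as $\rho$ decreases only across crossings in columns $i-1, i, i+1$. A crossing in column $i$ at row $\rho$ is precisely a row where $W(\rho^+) = W(\rho^-)$ but the two wires swap their roles — so the crossings in column $i$, read from top to bottom, are exactly the maximal "plateaus" where $W$ stays constant and then the two wires exchange; and since each pair of wires crosses only once (Lemma~\ref{lemma_properties_of_longest_elt}(2)), consecutive column-$i$ crossings share a wire (the one that does not "leave" position $i$ or $i+1$ via an adjacent-column crossing is a bit delicate — but two consecutive column-$i$ crossings $c < c'$ with no column-$i$ crossing strictly between them: the pair $W$ just below $c$ and just above $c'$ must be linked by a sequence of adjacent-column swaps that starts and ends with a common wire, because of the single-crossing property). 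That common wire gives the downward path from $c'$ up to $c$. Hence all column-$i$ crossings lie on a single downward chain.

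I expect the main obstacle to be the bookkeeping in the previous paragraph: making precise the claim that two consecutive column-$i$ crossings (consecutive in row order) are joined by a downward path, i.e., share a wire, using that distinct wires cross exactly once. This is where the special properties of $w_0^{(n+1)}$ (every pair of wires meets, and meets once) from Lemma~\ref{lemma_properties_of_longest_elt} are essential — for a general element $w$ the fiber need not be a chain. An alternative, possibly slicker, route avoiding wiring diagrams: argue directly on the word $\mathbf{i}$ that between two occurrences of the letter $i$ there must be an occurrence of $i-1$ or $i+1$ (else a $2$-move would collide, or: $s_i s_i$ is not reduced, and in a reduced word of $w_0$ consecutive $i$'s must be "separated" by a neighbor), which forces a covering relation in column $i$ across any gap; iterating shows the column-$i$ elements are linearly ordered. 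I would try the word-combinatorial argument first as it is likely the shortest, falling back to the wiring-diagram argument for intuition and as a safety net.
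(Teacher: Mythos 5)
Your preferred route---the word-combinatorial argument at the end---is exactly the paper's proof: if $i_x=i_y=i$ with $x<y$ and no position $z$ strictly between them carries a letter in $\{i-1,i+1\}$, the word is not reduced, so such a $z$ exists and gives $x<_{\Pi}z<_{\Pi}y$; hence any two elements of the fiber are comparable. One correction to a side remark: this argument uses nothing special about $w_0^{(n+1)}$, so the fiber \emph{is} a chain for any reduced word of any $w\in\mathfrak{S}_{n+1}$; your claim that the single-crossing/every-pair-crosses properties of Lemma~\ref{lemma_properties_of_longest_elt} are essential and that the statement can fail for general $w$ is false. The wiring-diagram bookkeeping that occupies most of your write-up is therefore unnecessary (and, as you note yourself, not fully resolved), but since you explicitly fall back to the word argument, the proposal as a whole is sound.
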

\begin{proof}
  Since $P\in\P(w_0^{(n+1)})$, there is a reduced word
  $\mathbf{i}=(i_1,i_2,\dots,i_{\barn})\in\Rn{n+1}$ such that $P_{\mathbf{i}}
  \sim P$.
  Therefore it suffices to show that $\{ x\in \Pi: f_{\Pi}(x) = i\}$ is a chain.
  Consider $x,y\in\Pi$ with $\ffi(x)=\ffi(y)=i$ and $x<_\Z y$. This means
  $i_x=i_y=i$. Since $\mathbf{i}$ is reduced there must be an integer $z$ such
  that $x<z<y$ and $i_z\in\{i-1,i+1\}$. Then $x<_{\Pi}z<_{\Pi}y$ and therefore
  $x<_{\Pi}y$. Since any two elements $x,y\in \Pi$ with $\ffi(x)=\ffi(y)$ are
  comparable, $\{ x\in \Pi: f_{\Pi}(x) = i\}$ is a chain and the lemma is
  proved.
\end{proof}

Now we define two operations, called contractions, on word posets.

\begin{definition}
	Let $P\in\P(w_0^{(n+1)})$ and let
  \begin{align*}
\D(P) &= \{d_1<_P d_{2}<_P \dots <_P d_n\},\\
\A(P) &= \{a_1<_P a_{2}<_P \dots <_P a_n\}.
  \end{align*}
  \begin{enumerate}
\item  The \emph{$\D$-contraction} of $P$ is the word poset $C_\D(P)$ obtained from
  $P$ by removing the descending chain $\D(P)$ with the function
  $f_{C_\D(P)} \colon C_\D(P)\to\Z_{>0}$ given by
\[
f_{C_\D(P)}(k) = 
\begin{cases}
f_P(k) & \mbox{if $k\in I_\D(P)$},\\
f_P(k)-1 & \mbox{if $k\in C_\D(P)\setminus I_\D(P)$},
\end{cases}
\]
where $ I_\D(P) = \{k\in P: k<_P d_i \text{ and } f_P(k) = f_P(d_i) \text{ for
  some } i \in [n] \}$. Here, the poset structure of $C_\D(P)$ is induced
  from that of $P$, i.e., for $x,y\in C_\D(P)$ we have $x<_{C_\D(P)}y$ if and
  only if $x<_{P}y$.
\item The \emph{$\A$-contraction} of $P$ is the word poset $C_\A(P)$ obtained from $P$
by removing the ascending chain $\A(P)$ with the function
$f_{C_\A(P)} \colon C_\A(P)\to\Z_{>0}$ given by
\[
f_{C_\A(P)}(k) = 
\begin{cases}
f_P(k)-1 & \mbox{if $k\in  I_\A(P)$},\\
f_P(k) & \mbox{if $k\in C_\D(P)\setminus I_\A(P)$},
\end{cases}
\]
where
$ I_\A(P) = \{k\in P: k<_P a_i \text{ and } f_P(k) =f_P(a_i) \text{ for some} i \in [n]\}$. 
Here, the poset structure of $C_\A(P)$ is induced
  from that of $P$.
\end{enumerate}
\end{definition}

Observe that $I_\D(P)$ (respectively, $I_\A(P)$) is the ideal consisting of the
elements below the descending chain $\D(P)$ (respectively, ascending chain
$\A(P)$) in the Hasse diagram of $P$. We call $I_\D(P)$ (respectively,
$I_\A(P)$) the \emph{$\D$-contraction ideal} (respectively,
\emph{$\A$-contraction ideal}) of $P$. Here, an \emph{ideal} of a poset $P$
means a subset $I$ of $P$ with the property that $x\in I$ and $y<_P x$ imply
$y\in I$.

One may consider $C_\D(P)$ as the word
poset whose Hasse diagram is obtained from that of $P$ by removing the
descending chain $\D(P)$ and shifting the part $(P\setminus \D(P)) \setminus
I_\D(P)$ above $\D(P)$ to the left by one column. Similarly one may consider
$C_\A(P)$ as the word poset whose Hasse diagram is obtained from that of $P$ by
removing the ascending chain $\A(P)$ and shifting the part $I_\A(P)$ below
$\A(P)$ to the left by one column. See Figure~\ref{fig_poset_example_Rn6}.

\begin{example}
	Let $\mathbf i = (4,3,4,2,3,4,1,2,5,4,3,2,1,4,5) \in \Rn{6}$. The Hasse
	diagrams of $\Pi$, $C_\A(\Pi)$, and $C_\D(\Pi)$ are
	shown in Figure~\ref{fig_poset_example_Rn6}.
	Note that  $\ind_{\A}(\Pi) = 2$ and $\ind_{\D}(\Pi) = 2$.
	\begin{figure}
		\begin{tabular}{ccc}
			\begin{tikzpicture}[auto,node distance=0.1cm and 0.5cm, inner sep=2pt,
			halo/.style={line join=round,
				double,line cap=round,double distance=#1,shorten >=-#1/2,shorten <=-#1/2},
			halo/.default=7mm]
			\tikzstyle{state}=[draw, circle]
			
			\node[state] (1) at (0,0) {$1$};
			\node[state] (2) at (-1,0.5) {$2$};
			\node[state] (3) at (0,1) {$3$};
			\node[state] (4) at (-2, 1)  {$4$};
			\node[state] (5) at (-1,1.5)  {$5$};
			\node[state] (6) at (0,2){$6$};
			\node[state, very thick, draw=red] (7) at (-3, 1.5) {$7$};
			\node[state, very thick, draw=red] (8) at (-2,2) {$8$};
			\node[state, fill=blue!20] (9) at (1, 2.5) {$9$};
			\node[state, fill=blue!20] (10) at (0,3) {$10$};
			\node[state, very thick, draw=red, fill=blue!20] (11) at (-1,3.5) {$11$};
			\node[state, fill=blue!20] (12) at (-2,4) {$12$};
			\node[state, fill=blue!20] (13) at (-3, 4.5) {$13$};
			\node[state, very thick, draw=red] (14) at (0, 4) {$14$};
			\node[state, very thick, draw=red] (15) at (1, 4.5) {$15$};
			
			\draw (1)--(2) (2)--(3) (2)--(4) (3)--(5) (4)--(5) (4)--(7)--(8)--(5)
			(5)--(6)--(9)--(10)--(11)--(12)--(13)
			(8)--(11)--(14)--(15);
			
			\draw[very thick, red] (7)--(8)--(11)--(14)--(15) node[right = 0.5cm] {$\A(P_{\mathbf i})$};
			
			\node (9above) [below  of = 9, node distance =0.55cm] {};
			\node (13below) [above of =13, node distance = 0.55cm] {};
			

    \draw[very thick, dashed, blue] (13)--(12)--(11)--(10)--(9) node[right=0.5cm] {$\D(P_{\mathbf i})$}; 
    			
			
		\draw[dotted, gray] (7)--(13)--(-3,5.3) node[above] {$1$};
\draw[dotted, gray] (4)--(8)--(12)--(-2, 5.3) node[above] {$2$};
\draw[dotted, gray] (2)--(5)--(11)--(-1,5.3) node[above] {$3$};
\draw[dotted, gray] (1)--(3)--(6)--(10)--(14)--(0, 5.3) node[above] {$4$};
\draw[dotted, gray] (9)--(15)--(1,5.3) node[above] {$5$};

			\end{tikzpicture}
			&
			\begin{tikzpicture}[auto,node distance=0.1cm and 0.5cm, inner sep=2pt, scale = 0.8]
			\tikzstyle{state}=[draw, circle]
			
			\node[state] (1) at (0,0) {$1$};
\node[state] (2) at (-1,0.5) {$2$};
\node[state] (3) at (0,1) {$3$};
\node[state] (4) at (-2, 1)  {$4$};
\node[state] (5) at (-1,1.5)  {$5$};
\node[state] (6) at (0,2){$6$};
\node[state ] (7) at (-3, 1.5) {$7$};
\node[state] (8) at (-2,2) {$8$};

			\node[state] (9) at (-1,2.5) {$14$};
			\node[state] (10) at (0,3)  {$15$};
			
			\draw (1)--(2)--(3)
			(4)--(5)--(6)
			(7)--(8)--(9)--(10)
			(2)--(4)--(7)
			(3)--(5)--(8)
			(6)--(9);
	
			\draw[dotted, gray] (7)--(-3,4) node[above] {$1$};
	\draw[dotted, gray] (4)--(8)--(-2, 4) node[above] {$2$};
	\draw[dotted, gray] (2)--(5)--(9)--(-1,4) node[above] {$3$};
	\draw[dotted, gray] (1)--(3)--(6)--(10)--(0,4) node[above] {$4$};		
			\end{tikzpicture}
			&
			\begin{tikzpicture}[auto,node distance=0.1cm and 0.5cm, inner sep=2pt, scale = 0.8]
			\tikzstyle{state}=[draw, circle]
			
			\node[state] (1) at (0,0) {$1$};
\node[state] (2) at (-1,0.5) {$2$};
\node[state] (3) at (0,1) {$3$};
\node[state] (4) at (-2, 1)  {$4$};
\node[state] (5) at (-1,1.5)  {$5$};
\node[state] (6) at (0,2){$6$};			
\node[state ] (7) at (1, 2.5) {$9$};
\node[state] (8) at (0,3) {$10$};			
			
			\node[state] (9) at (-1,3.5) {$12$};
			\node[state] (10) at (-2,4) {$13$};
			
			\draw (1)--(2)--(3)--(5)--(6)--(7)--(8)--(9)--(10)
			(2)--(4)--(5);
			
\draw[dotted, gray] (4)--(10)--(-2,5) node[above] {$1$};
\draw[dotted, gray] (2)--(5)--(9)--(-1, 5) node[above] {$2$};
\draw[dotted, gray] (1)--(3)--(6)--(8)--(0,5) node[above] {$3$};				
\draw[dotted, gray] (7)--(1,5) node[above] {$4$};
			\end{tikzpicture} 
			\\
			$\Pi$ & $C_\D(\Pi)$ & $C_\A(\Pi)$
		\end{tabular}
		\caption{The Hasse diagrams of $\Pi$, $C_\D(\Pi)$, and
			$C_\A(\Pi)$ for $\mathbf i = (4,3,4,2,3,4,1,2,5,4,3,2,1,4,5)$. In
			each diagram, the element $k$ is in column $\ffi(k)$,
			$f_{C_\D(\Pi)}(k)$, or $f_{C_\A(\Pi)}(k)$.}
		\label{fig_poset_example_Rn6}
	\end{figure}
\end{example}

Now we define the reverse operations of the contractions.

\begin{definition}\label{def:extensions}
	Let $P\in\P(w_0^{(n)})$ and let $I$ be an ideal of $P$. 
\begin{enumerate}
\item  The \emph{$\D$-extension} of $P$ with respect to $I$ is the word poset
  $E_\D(P,I)$ which is a poset on $P\sqcup \{d_1,d_2,\dots,d_n\}$ with a function
  $f_{E_\D(P,I)} \colon E_\D(P,I)\to \Z_{>0}$ defined as follows.
  \begin{itemize}
  \item The function $f_{E_\D(P,I)} \colon E_\D(P,I)\to \Z_{>0}$ is given by 
\[
  f_{E_\D(P,I)}(x) =
  \begin{cases}
    f_P(x) & \mbox{if $x\in  I$},\\
    n+1-i & \mbox{if $x=d_i$},\\
    f_P(x)+1 & \mbox{if $x\in P\setminus I$}.
  \end{cases}
\]
\item The covering relations of the poset $E_\D(P,I)$ on $P\sqcup
  \{d_1,d_2,\dots,d_n\}$ are given by $x\lessdot_{E_\D(P,I)}y$ if and only if
  one of the following conditions holds:
  \begin{enumerate}
  \item $x,y\in I$ and $x\lessdot_P y$,
  \item $x,y\in P\setminus I$ and $x\lessdot_P y$,
  \item $x=d_i$ and $y=d_{i+1}$ for some $i\in[n-1]$,
  \item $x$ is a maximal element of $I$ in $P$, $y\in \{d_1,\dots,d_n\}$, and
    \[
    |f_{E_\D(P,I)}(x)-f_{E_\D(P,I)}(y)|=1\text{, or}
    \]
  \item $x\in \{d_1,\dots,d_n\}$, $y$ is a minimal element of $P\setminus I$ in
    $P$, and 
    \[
    |f_{E_\D(P,I)}(x)-f_{E_\D(P,I)}(y)|=1.
    \]
  \end{enumerate}
  \end{itemize}

\item   The \emph{$\A$-extension} of $P$ with respect to $I$ is the word poset
  $E_\A(P,I)$ which is a poset on $P\sqcup \{a_1,a_2,\dots,a_n\}$ with a function
  $f_{E_\A(P,I)}\colon E_\A(P,I)\to \Z_{>0}$ defined as follows.
  \begin{itemize}
  \item The function $f_{E_\A(P,I)} \colon E_\A(P,I)\to \Z_{>0}$ is given by 
\[
  f_{E_\A(P,I)}(x) =
  \begin{cases}
    f_P(x)+1 & \mbox{if $x\in  I$},\\
    i & \mbox{if $x=a_i$},\\
    f_P(x) & \mbox{if $x\in P\setminus I$}.
  \end{cases}
\]
\item The covering relations of the poset $E_\A(P,I)$ on $P\sqcup
  \{a_1,a_2,\dots,a_n\}$ are given by $x\lessdot_{E_\A(P,I)}y$ if and only if
  one of the following conditions holds:
  \begin{enumerate}
  \item $x,y\in I$ and $x\lessdot_P y$,
  \item $x,y\in P\setminus I$ and $x\lessdot_P y$,
  \item $x=a_i$ and $y=a_{i+1}$ for some $i\in[n-1]$,
  \item $x$ is a maximal element of $I$ in $P$, $y\in \{a_1,\dots,a_n\}$, and
    \[
   |f_{E_\A(P,I)}(x)-f_{E_\A(P,I)}(y)|=1\text{, or}
   \]
  \item $x\in \{a_1,\dots,a_n\}$, $y$ is a minimal element of $P\setminus I$ in
    $P$, and 
    \[
    |f_{E_\A(P,I)}(x)-f_{E_\A(P,I)}(y)|=1.\]
  \end{enumerate}
  \end{itemize}
\end{enumerate}
\end{definition}

The extensions are the reverse operations of the contractions in the following
sense.

\begin{proposition}\label{prop:EC}
	Let $P\in\P(w_0^{(n+1)})$. Then
  \begin{align*}
P &\sim E_\D(C_\D(P), I_\D(P)),\\
P &\sim E_\A(C_\A(P), I_\A(P)).
  \end{align*}
\end{proposition}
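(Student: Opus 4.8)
The plan is to verify, after the evident identification of underlying sets, that $E_\D(C_\D(P),I_\D(P))$ is in fact \emph{equal} to $P$ as a word poset; the $\A$-statement then follows by the same argument with the roles of $\A$ and $\D$ exchanged, or from the $\D$-statement via the Dynkin-diagram automorphism $i\mapsto n+1-i$ of $\mathfrak S_{n+1}$, which fixes $w_0^{(n+1)}$ and interchanges $\A(P)\leftrightarrow\D(P)$, $C_\A\leftrightarrow C_\D$, and $E_\A\leftrightarrow E_\D$. Set $Q=C_\D(P)$, $I=I_\D(P)$, $J=Q\setminus I$, and $\D(P)=\{d_1<_Pd_2<_P\dots<_Pd_n\}$. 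First I would dispose of the bookkeeping: $\D(P)\cap I=\emptyset$ (an element $d_j\in I$ would force $d_j<_Pd_j$), so $I\subseteq Q=P\setminus\D(P)$; and $I$ is an ideal of $Q$ since the order of $Q$ is induced from that of $P$ and $I$ is an ideal of $P$. Thus $E_\D(Q,I)$ is defined; its ground set is $Q\sqcup\{d_1,\dots,d_n\}$, and identifying the new $d_i$ with the $i$th element of $\D(P)$ makes this ground set literally $I\sqcup J\sqcup\D(P)=P$.

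The two column functions then agree, directly from the definitions of $C_\D$ and $E_\D$: for $x\in I$ one has $f_{E_\D(Q,I)}(x)=f_Q(x)=f_P(x)$; for $x\in J$, $f_{E_\D(Q,I)}(x)=f_Q(x)+1=(f_P(x)-1)+1=f_P(x)$; and $f_{E_\D(Q,I)}(d_i)=n+1-i=f_P(d_i)$. It remains to show $P$ and $E_\D(Q,I)$ have the same covering relations (two finite posets on a common ground set that share all covers are equal). I would organize this as a case check on which of the blocks $I$, $J$, $\D(P)$ contains each of the two elements, matching the five families (a)--(e) of covers of $E_\D(Q,I)$ against the covers of $P$. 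The favorable inputs are: inside a single block the orders of $P$, of $Q$ and of $E_\D(Q,I)$ coincide; following covers of $E_\D(Q,I)$ upward from any $d_i$ one never leaves $\{d_1,\dots,d_n\}\cup J$ and downward one never leaves $\{d_1,\dots,d_n\}\cup I$, so no $d_i$ is strictly between two elements of $I$, nor between two elements of $J$, in $E_\D(Q,I)$; and by Lemma~\ref{lem:chain} each column of $P$ is a chain, which pins down the elements of $P$ immediately above and below $d_i$ in its neighboring columns -- precisely the data encoded by conditions (d) and (e), namely the maximal elements of $I$ and the minimal elements of $J$ lying in a column adjacent to $f_P(d_i)$.

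The one place where I expect real work, and the main obstacle, is the block $J$: to know that a cover of $Q$ with both endpoints in $J$ is still a cover of $P$, one must rule out the existence of some $d_i\in\D(P)$ with $x<_Pd_i<_Py$ for $x,y\in J$; equivalently, one needs that no element of $J$ lies below any element of $\D(P)$ in $P$, i.e., that $I_\D(P)$ is exactly the order ideal of $P$ consisting of the elements below the descending chain (as noted just after the definition of the contractions). Concretely this can be extracted from the wiring diagram $G(\mathbf i)$ of a representative $\mathbf i$ of $P$: the descending chain is the set of crossings on the $(n+1)$st wire, and by Lemma~\ref{lemma_properties_of_longest_elt}(4) following that wire downward from $d_i$ passes through $d_{i+1},\dots,d_n$ in columns $n-i,\dots,1$, so any crossing admitting a downward path to some $d_i$ also admits one to the $d_j$ lying in its own column and hence already belongs to $I_\D(P)$; combined with the fact that each column is a chain, this also controls the covers of $P$ between $\D(P)$ and $J$. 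Once this structural point is in place, the remaining mixed cases ($\D(P)$ against $I$, and $\D(P)$ against $J$) are matched directly with conditions (c), (d), (e), completing the proof.
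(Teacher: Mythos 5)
The paper itself gives no argument here (``straightforward to check\dots we omit the details''), so your direct verification --- match ground sets, match column functions, then match the order relations, with the heavy lifting concentrated in the separation property of the descending chain --- is exactly the natural route, and you have correctly located where the content lies. However, the justification you offer for that separation property has a gap. You argue that any crossing $x$ with a downward path to some $d_i$ also has one to the $d_j$ in its own column, \emph{because} the $(n+1)$st wire continues downward from $d_i$ through $d_{i+1},\dots,d_n$ in columns $n-i,\dots,1$. This only reaches the chain element in column $f_P(x)$ when $f_P(x)\le n+1-i$. If $x$ lies in a column strictly to the right of $d_i$'s column, the chain element of $x$'s column is $d_{n+1-f_P(x)}$ with $n+1-f_P(x)<i$, which sits \emph{above} $d_i$ on the wire, and following the wire downward from $d_i$ never gets there; yet this case occurs (e.g.\ an element of column $n$ lying below $d_n$). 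It is still true that such an $x$ lies in $I_\D(P)$, but one needs a separate argument --- for instance, take the minimal $i$ with $x<_Pd_i$, truncate a chain of generating relations from $x$ to $d_i$ at the last step entering column $n+1-i$, and use that between the rows of $d_{i-1}$ and $d_i$ the $(n+1)$st wire occupies position $n+2-i$, so the truncation point would have to be a crossing on the wire, contradicting minimality. The mirror statement (no $d_j<_Py$ with $y\in I_\D(P)$), which you need for the covers inside $I$, requires the same care.

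A second, smaller caution: the frame ``the two posets share all covers, hence are equal'' does not literally apply, because the relations listed in (d) and (e) of Definition~\ref{def:extensions} are not all covers of $P$ --- the column-top element of $I_\D(P)$ in column $c$ is declared adjacent to both $d_{n-c}$ and $d_{n+2-c}$, but the latter relation factors through $d_{n+1-c}$. Worse, under the strict reading of ``maximal element of $I$'' as poset-maximal, condition (d) actually \emph{misses} covers of $P$: for $\mathbf i=(1,2,1,3,2,1)$ one has $I_\D(P_{\mathbf i})=\{1<2<3\}$, whose only maximal element is $3$, yet $2\lessdot 4=d_1$ is a cover of $P_{\mathbf i}$, so the listed relations would not generate $2<d_1$. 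Your parenthetical gloss (``the elements of $P$ immediately above and below $d_i$ in its neighboring columns'') shows you are implicitly using the column-wise maximal/minimal reading, which is the one that makes the proposition true; you should say so explicitly, and you should compare transitive closures rather than cover sets: every listed relation holds in $P$, and every cover of $P$ is among (or generated by) the listed relations. With those two repairs the proof goes through.
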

\begin{proof}
  This is straightforward to check using the definitions of contractions and
  extensions. We omit the details.
\end{proof}
In \cite[Definition~3.6]{CKLP}, the $\D$-contraction $C_\D(\mathbf{i})$ and the
$\A$-contraction~$C_\A(\mathbf{i})$ of a reduced word $\mathbf i \in \Rn{n+1}$
are defined using wiring diagrams. For $\mathbf i \in \Rn{n+1}$, let $G(\mathbf
i)$ be the corresponding wiring diagram. Removing the $(n+1)$st wire from
$G(\mathbf i)$ and shifting the part below this wire to the left by one, we get
a new wiring diagram such that the $j$th wire travels from the $j$th starting
point to the $(n+1-j)$th ending point. Since the number of crossings decreases
by $n$, the new wiring diagram represents a reduced word in $\Rn{n}$. Similarly,
removing the $1$st wire from $G(\mathbf{i})$ also produces the wiring diagram of
a reduced word in $\Rn{n}$. One can check that $C_\D(\Pi)\sim
P_{C_\D(\mathbf{i})}$ and $C_\A(\Pi)\sim P_{C_\A(\mathbf{i})}$. This leads us to
the following result.
\begin{proposition}
  Let $P\in\P(w_0^{(n+1)})$. Then $C_\D(P)$ and $C_\A(P)$ are word posets in~$\P(w_0^{(n)})$.
\end{proposition}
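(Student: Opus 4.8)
The statement to prove is that $C_\D(P)$ and $C_\A(P)$ lie in $\P(w_0^{(n)})$ whenever $P\in\P(w_0^{(n+1)})$. The natural route is to transport everything to wiring diagrams via the bijection $[\P(w)]\leftrightarrow[\WD(w)]$ of Proposition~\ref{prop:wiring} and then invoke the wiring-diagram description of contractions already recalled in the paragraph preceding the statement. First I would fix $\mathbf i\in\Rn{n+1}$ with $P_{\mathbf i}\sim P$ (possible by Proposition~\ref{prop:i=P}); since $\P(w_0^{(n)})$ is closed under $\sim$, it suffices to prove $C_\D(P_{\mathbf i})\in\P(w_0^{(n)})$ and $C_\A(P_{\mathbf i})\in\P(w_0^{(n)})$. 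By the already-cited fact that $C_\D(P_{\mathbf i})\sim P_{C_\D(\mathbf i)}$ and $C_\A(P_{\mathbf i})\sim P_{C_\A(\mathbf i)}$ (from \cite[Definition~3.6]{CKLP} and the compatibility noted in the text), this reduces further to showing that $C_\D(\mathbf i)$ and $C_\A(\mathbf i)$ are honest reduced words in $\Rn{n}$.

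The core of the argument is then the claim that deleting the $(n+1)$st wire from $G(\mathbf i)$ (respectively the $1$st wire) and shifting the region below it left by one column yields a valid wiring diagram of an element of $\Rn{n}$. Here is how I would organize it. By Lemma~\ref{lemma_properties_of_longest_elt}(4), the crossings on the $(n+1)$st wire are, read top to bottom, in columns $n,n-1,\dots,1$; in particular this wire occupies exactly one crossing in each column and separates, in each column, the crossings into those above it and those below it. Removing the wire and sliding the ``below'' part left by one: I must check (i) that the result is still a union of unit segments arranged row-by-row, i.e.\ that no two crossings end up in the same row and every row still has exactly one crossing, and (ii) that each crossing still occurs between adjacent columns. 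Point (ii) is immediate from the definition of the shift (the function $f$ changes by $0$ or $-1$, and the poset covering relations are preserved, so $|f(x)-f(y)|=1$ is maintained for covers — this is exactly the content of the ``shifting'' convention). Point (i) is the bookkeeping that the remaining $\barn - n = \overline{n-1}$ crossings can be stacked into $\overline{n-1}$ rows with one per row, which follows because removing the wire removes exactly one crossing from each of the $n$ affected rows (the rows $d_1<\dots<d_n$), leaving those rows either empty or with their unique remaining crossing, and the ``below/above'' separation guarantees no collisions after the slide.

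Finally I would verify the permutation is right: after deleting the $(n+1)$st wire, the $j$th wire of the new diagram (for $1\le j\le n$) runs from starting point $j$ to ending point $n+1-j$, i.e.\ the new diagram realizes $w_0^{(n)}$; since it has $\overline{n-1}=\ell(w_0^{(n)})$ crossings, the associated word is reduced, so it lies in $\Rn{n}$. The symmetric argument with the $1$st wire (whose crossings are in columns $1,2,\dots,n$ by Lemma~\ref{lemma_properties_of_longest_elt}(4)) handles $C_\A$. I expect the main obstacle to be purely expository: making precise, without drowning in notation, why the ``slide left'' of the sub-configuration below the deleted wire does not create two crossings in one row and does not destroy the unit-segment structure — intuitively obvious from a picture like Figure~\ref{fig_poset_example_Rn6}, but needing the observation from Lemma~\ref{lemma_properties_of_longest_elt}(2),(4) that the deleted wire is ``monotone'' across the columns so that ``above'' and ``below'' it are well separated in every column. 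Given that the wiring-diagram version of contractions is already granted in the text, I would in fact keep this short: state that the claim is the wiring-diagram translation of \cite[Definition~3.6]{CKLP}, cite the compatibility $C_\D(P_{\mathbf i})\sim P_{C_\D(\mathbf i)}$, $C_\A(P_{\mathbf i})\sim P_{C_\A(\mathbf i)}$ already recorded above, and conclude.
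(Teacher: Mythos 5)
Your proposal is correct and follows essentially the same route as the paper: the paper justifies this proposition entirely by the preceding paragraph, which describes removing the $(n+1)$st (or $1$st) wire from $G(\mathbf i)$, shifting the part below it left by one column, observing that the resulting diagram realizes $w_0^{(n)}$ with $\barn-n=\ell(w_0^{(n)})$ crossings, and invoking the compatibility $C_\D(\Pi)\sim P_{C_\D(\mathbf i)}$, $C_\A(\Pi)\sim P_{C_\A(\mathbf i)}$. You have simply filled in the bookkeeping that the paper leaves as ``one can check.''
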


Finally, we define the $\delta$-index of a word poset.

\begin{definition}
  For a sequence $\ad = (\ad_1,\dots,\ad_{n-1}) \in \{\A,\D\}^{n-1}$, the {\em
    $\ad$-index} of~$P\in\P(w_0^{(n+1)})$ is the integer vector
  $\mathrm{ind}_\ad(P) = (I_1,\dots,I_{n-1})$ defined by
\[
I_k \coloneqq \ind_{\ad_k}(C_{\ad_{k+1}} \circ C_{\ad_{k+2}} \circ  \cdots \circ C_{\ad_{n-1}}(P)),
\qquad 1\le k\le n-1,
\]
where $I_{n-1} = \ind_{\ad_{n-1}}(P)$. The {\em $\ad$-index} of $\mathbf i \in \Rn{n+1}$
is defined by $\mathrm{ind}_\ad({\bf i}) = \mathrm{ind}_\ad(\Pi)$.
\end{definition}

\begin{example}
  Let $\mathbf i = (4,3,4,2,3,4,1,2,5,4,3,2,1,4,5) \in \Rn{6}$. Then, for a
  sequence $\ad = (\A,\A,\A,\A) \in \{\A,\D\}^4$, we obtain $\ind_{\ad}(\mathbf
  i) = (1,2,3,2)$ as shown in Figure~\ref{fig_example_434234125432145}.
	\begin{figure}
	\begin{tabular}{cccc}
		\begin{tikzpicture}[auto,node distance=0.1cm and 0.5cm, inner sep=2pt,
		halo/.style={line join=round,
			double,line cap=round,double distance=#1,shorten >=-#1/2,shorten <=-#1/2},
		halo/.default=7mm, 
		scale = 0.7]
		\tikzstyle{state}=[draw, circle]
		
		\node[state] (1) at (0,0) {};
		\node[state] (2) at (-1,0.5) {};
		\node[state] (3) at (0,1) {};
		\node[state] (4) at (-2, 1)  {};
		\node[state] (5) at (-1,1.5)  {};
		\node[state] (6) at (0,2){};
		\node[state, very thick, draw=red] (7) at (-3, 1.5) {};
		\node[state, very thick, draw=red] (8) at (-2,2) {};
		\node[state ] (9) at (1, 2.5) {};
		\node[state ] (10) at (0,3) {};
		\node[state, very thick, draw=red ] (11) at (-1,3.5) {};
		\node[state ] (12) at (-2,4) {};
		\node[state ] (13) at (-3, 4.5) {};
		\node[state, very thick, draw=red] (14) at (0, 4) {};
		\node[state, very thick, draw=red] (15) at (1, 4.5) {};
		
		\draw (1)--(2) (2)--(3) (2)--(4) (3)--(5) (4)--(5) (4)--(7)--(8)--(5)
		(5)--(6)--(9)--(10)--(11)--(12)--(13)
		(8)--(11)--(14)--(15);
		
		\draw[very thick, red] (7)--(8)--(11)--(14)--(15) node[right = 0.5cm] {};
		
		\node (9above) [below  of = 9, node distance =0.55cm] {};
		\node (13below) [above of =13, node distance = 0.55cm] {};

		\draw[dotted, gray] (7)--(13)--(-3,5.3) node[above] {$1$};
		\draw[dotted, gray] (4)--(8)--(12)--(-2, 5.3) node[above] {$2$};
		\draw[dotted, gray] (2)--(5)--(11)--(-1,5.3) node[above] {$3$};
		\draw[dotted, gray] (1)--(3)--(6)--(10)--(14)--(0, 5.3) node[above] {$4$};
		\draw[dotted, gray] (9)--(15)--(1,5.3) node[above] {$5$};

		\end{tikzpicture}
		&
	
		\begin{tikzpicture}[auto,node distance=0.1cm and 0.5cm, inner sep=2pt, scale = 0.7]
		\tikzstyle{state}=[draw, circle]
		
		\node[state] (1) at (0,0) {};
		\node[state] (2) at (-1,0.5) {};
		\node[state] (3) at (0,1) {};
		\node[state, very thick, draw=red] (4) at (-2, 1)  {};
		\node[state, very thick, draw=red] (5) at (-1,1.5)  {};
		\node[state, very thick, draw=red] (6) at (0,2){};			
		\node[state , very thick, draw=red] (7) at (1, 2.5) {};
		\node[state] (8) at (0,3) {};			
		
		\node[state] (9) at (-1,3.5) {};
		\node[state] (10) at (-2,4) {};
		
		\draw (1)--(2)--(3)--(5)--(6)--(7)--(8)--(9)--(10)
		(2)--(4)--(5);
		
			\draw[very thick, red] (4)--(5)--(6)--(7);
		
		\draw[dotted, gray] (4)--(10)--(-2,5) node[above] {$1$};
		\draw[dotted, gray] (2)--(5)--(9)--(-1, 5) node[above] {$2$};
		\draw[dotted, gray] (1)--(3)--(6)--(8)--(0,5) node[above] {$3$};				
		\draw[dotted, gray] (7)--(1,5) node[above] {$4$};
		\end{tikzpicture} 
		&
				\begin{tikzpicture}[auto,node distance=0.1cm and 0.5cm, inner sep=2pt, scale = 0.7]
		\tikzstyle{state}=[draw, circle]
		
		\node[state] (1) at (-1,0) {};
		\node[state, very thick, draw=red] (2) at (-2,0.5) {};
		\node[state, very thick, draw=red] (3) at (-1,1) {};

		\node[state, very thick, draw=red] (8) at (0,1.5) {};			
		
		\node[state] (9) at (-1,2) {};
		\node[state] (10) at (-2,2.5) {};
		
		\draw (1)--(2)--(3)--(8)--(9)--(10);
		
		\draw[very thick, red] (2)--(3)--(8);
		
		\draw[dotted, gray] (2)--(10)--(-2,5) node[above] {$1$};
		\draw[dotted, gray] (1)--(3)--(9)--(-1, 5) node[above] {$2$};
		\draw[dotted, gray] (8)--(0,5) node[above] {$3$};				
		\end{tikzpicture} 
		&
						\begin{tikzpicture}[auto,node distance=0.1cm and 0.5cm, inner sep=2pt, scale = 0.7]
		\tikzstyle{state}=[draw, circle]
		
		\node[state, very thick, draw=red] (1) at (-2,0) {};
		
		\node[state, very thick, draw=red] (9) at (-1,0.5) {};
		\node[state] (10) at (-2,1) {};
		
		\draw (1)--(9)--(10);
		
		\draw[very thick, red] (1)--(9);
		
		\draw[dotted, gray] (1)--(10)--(-2,5) node[above] {$1$};
		\draw[dotted, gray] (9)--(-1, 5) node[above] {$2$};
		\end{tikzpicture} 
		\\
		$\Pi$ &  $C_\A(\Pi)$ & $C_\A(C_\A(\Pi))$ & $C_\A(C_\A(C_\A(\Pi)))$
	\end{tabular}
	\caption{The Hasse diagrams of $\Pi$, 
		$C_\A(\Pi)$, $C_\A(C_\A(\Pi))$, and $C_\A(C_\A(C_\A(\Pi)))$ for $\mathbf i = (4,3,4,2,3,4,1,2,5,4,3,2,1,4,5)$.}
	\label{fig_example_434234125432145}
\end{figure}
\end{example}

\section{Gelfand--Cetlin type reduced words}
\label{secGelfandCetlinTypeCommutationClasses}
In this section, we introduce Gelfand--Cetlin type reduced words. We give a
recursive formula for the number of such reduced words using standard Young
tableaux of shifted shapes in Theorem~\ref{thm_SYT}. 

We first define Gelfand--Cetlin type word posets and Gelfand--Cetlin type
reduced words.

\begin{definition}
  A word poset $P\in\P(w_0^{(n+1)})$ is of \emph{Gelfand--Cetlin type} if there
  exists $\delta \in \{\A, \D\}^{n-1}$ such that $\ind_{\delta}(P) = (0,0,\dots,0)
  \in \Z^{n-1}$. Denote by $\PGC(n)$ the set of Gelfand--Cetlin type word posets in
  $\P(w_0^{(n+1)})$ and let $[\PGC(n)]=\PGC(n)/\sim$.
\end{definition}

\begin{definition}
  A reduced word $\mathbf i \in \Rn{n+1}$ is of \defi{Gelfand--Cetlin type} if
  $\Pi$ is of Gelfand--Cetlin type. A commutation class~$[\mathbf i]$ is of
  \defi{Gelfand--Cetlin type} if it contains a Gelfand--Cetlin type reduced
  word. Denote by $\RGC(n)$ the set of Gelfand--Cetlin type reduced words in
  $\Rn{n+1}$ and let $[\RGC(n)]=\{[\mathbf{i}]:\mathbf{i}\in\RGC(n)\}$.
\end{definition}

\begin{remark}
  One can deduce from~\cite[Theroem~A]{CKLP} that for $\mathbf i \in
  \RR(w_0^{(n+1)})$, the string polytope $\Delta_\mathbf i(\lambda)$ is
  combinatorially equivalent to a full dimensional Gelfand--Cetlin polytope of
  rank $n$ if and only if $\mathbf i$ is of Gelfand--Cetlin type, see the
  proof of Corollary~\ref{cor_main2}. This is why we say that such word posets
  and reduced words are of Gelfand--Cetlin type.
\end{remark}
The following proposition easily follows from Proposition~\ref{prop:i=P} and the
definitions of $\PGC(n)$ and $\RGC(n)$.

\begin{proposition}\label{prop:RGC=PGC}
  The map $[\mathbf{i}]\mapsto [\Pi]$ is a bijection from $[\RGC(n)]$ to
  $[\PGC(n)]$. Accordingly,
\[
|[\RGC(n)]|=|[\PGC(n)]|.
\]
\end{proposition}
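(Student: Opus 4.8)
The plan is to show that the map $[\mathbf{i}]\mapsto[\Pi]$ is well defined on the set of Gelfand--Cetlin type commutation classes, and that it restricts to a bijection onto $[\PGC(n)]$.

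First I would observe that Proposition~\ref{prop:i=P} already gives a bijection $[\mathbf{i}]\mapsto[\Pi]$ from $[\RR(w_0^{(n+1)})]$ to $[\P(w_0^{(n+1)})]$, so the only work is to check that this bijection carries the subset $[\RGC(n)]$ onto the subset $[\PGC(n)]$. The crucial point is that the $\delta$-index $\ind_\delta(\mathbf{i})=\ind_\delta(\Pi)$ depends only on the isomorphism class $[\Pi]$: indeed $\ind_\delta(\Pi)$ is defined purely in terms of the word poset $\Pi$ (via its descending and ascending chains, contractions, and the counting of elements above those chains in the Hasse diagram), and all of these notions are preserved under word poset isomorphism. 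Hence "$\mathbf{i}$ is of Gelfand--Cetlin type" is a property of the commutation class $[\mathbf{i}]$, and likewise "$P$ is of Gelfand--Cetlin type" is a property of $[P]$; this makes the notions $[\RGC(n)]$ and $[\PGC(n)]$ well defined (as already implicitly used in the definitions).

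Next I would argue the two containments. If $[\mathbf{i}]\in[\RGC(n)]$, pick a representative $\mathbf{i}'\in\RGC(n)$; then $\Pi'$ is of Gelfand--Cetlin type by definition, and since $\Pi\sim\Pi'$ (Proposition~\ref{prop_poset_determines_commutations_class}(2)), $[\Pi]=[\Pi']\in[\PGC(n)]$. Conversely, if $[P]\in[\PGC(n)]$, choose a representative $P'\in\PGC(n)$; by Proposition~\ref{prop:i=P} there is $\mathbf{j}\in\RR(w_0^{(n+1)})$ with $P_{\mathbf{j}}\sim P'$, so $P_{\mathbf{j}}$ is of Gelfand--Cetlin type, hence $\mathbf{j}\in\RGC(n)$ and $[P]=[P_{\mathbf{j}}]$ lies in the image of $[\RGC(n)]$. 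Injectivity is inherited from the injectivity in Proposition~\ref{prop:i=P}. Therefore the restricted map $[\RGC(n)]\to[\PGC(n)]$ is a bijection, and the equality $|[\RGC(n)]|=|[\PGC(n)]|$ follows immediately.

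I do not expect any serious obstacle here: the statement is essentially bookkeeping built on Proposition~\ref{prop:i=P}. The one mild subtlety worth spelling out carefully is the invariance of $\ind_\delta$ under $\sim$, i.e., that a word poset isomorphism $\phi\colon P\to Q$ sends $\D(P)$ to $\D(Q)$, $\A(P)$ to $\A(Q)$, commutes with $C_\D$ and $C_\A$, and preserves the "number of elements above a given chain in each column" count; this is immediate from the fact that $\phi$ is a poset isomorphism compatible with the column functions $f_P,f_Q$, together with the uniqueness in Proposition~\ref{prop_index}. Everything else is a direct transport of structure along the bijection of Proposition~\ref{prop:i=P}, so the proof can simply state this and omit the routine verifications.
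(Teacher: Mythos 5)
Your proposal is correct and matches the paper's approach: the paper simply remarks that the proposition ``easily follows from Proposition~\ref{prop:i=P} and the definitions of $\PGC(n)$ and $\RGC(n)$,'' and your argument is precisely the careful fleshing-out of that remark, including the one genuine point worth checking, namely that $\ind_\delta$ is an invariant of the word poset isomorphism class (via the uniqueness of the ascending and descending chains in Proposition~\ref{prop_index}), so that being of Gelfand--Cetlin type is a property of $[\mathbf{i}]$ and of $[\Pi]$.
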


We note that if $\ind_\A(P)=0$, then $C_\A(P) = I_\A(P)$. Similarly, we have
$C_\D(P) = I_{\D}(P)$ when $\ind_\D(P)=0$. The succeeding lemma follows
immediately from Proposition~\ref{prop:EC}.

\begin{lemma}\label{lem:EC0}
	Let $P\in\P(w_0^{(n+1)})$. 
If $\ind_{\A}(P)=0$, then
\[
P \sim E_\A(C_\A(P), C_\A(P)).
\]
Similarly, if $\ind_{\D}(P)=0$, then
\[
P \sim E_\D(C_\D(P), C_\D(P)).
\]
\end{lemma}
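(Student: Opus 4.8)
The plan is to deduce both statements directly from Proposition~\ref{prop:EC} together with the observation recorded just before the lemma. Recall Proposition~\ref{prop:EC} gives $P\sim E_\A(C_\A(P),I_\A(P))$ and $P\sim E_\D(C_\D(P),I_\D(P))$ for any $P\in\P(w_0^{(n+1)})$; so all that remains is to identify the contraction ideals $I_\A(P)$ and $I_\D(P)$ with the full posets $C_\A(P)$ and $C_\D(P)$ under the stated hypotheses on the indices.

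First I would treat the $\A$-case. By definition, $C_\A(P)$ is obtained from $P$ by removing the ascending chain $\A(P)=\{a_1<_P\dots<_P a_n\}$, and as a set $C_\A(P)=P\setminus\A(P)$, which decomposes as $I_\A(P)\sqcup\big((P\setminus\A(P))\setminus I_\A(P)\big)$. Here $I_\A(P)$ is the ideal of elements of $P$ lying below the ascending chain, and $(P\setminus\A(P))\setminus I_\A(P)$ is precisely the set of elements counted by $\ind_\A(P)$ — more precisely, by Definition~\ref{def_index} and Lemma~\ref{lem:chain}, $\ind_\A(P)$ counts the elements sitting strictly above some $a_i$ in its column, and since each column is a chain (Lemma~\ref{lem:chain}), every element of $P$ not on $\A(P)$ is either below or above the unique chain element in its column. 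Thus $\ind_\A(P)=0$ forces $(P\setminus\A(P))\setminus I_\A(P)=\emptyset$, i.e.\ $C_\A(P)=I_\A(P)$ as posets-with-function (the function $f_{C_\A(P)}$ agrees with $f_P$ on $C_\A(P)\setminus I_\A(P)$, which is now empty, and shifts $I_\A(P)$ to the left by one column, matching the shift in the extension). Substituting $I_\A(P)=C_\A(P)$ into $P\sim E_\A(C_\A(P),I_\A(P))$ yields $P\sim E_\A(C_\A(P),C_\A(P))$, as claimed.

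The $\D$-case is entirely symmetric: replace $\A$ by $\D$ throughout, use the descending chain $\D(P)$ and the fact that $\ind_\D(P)=0$ forces $(P\setminus\D(P))\setminus I_\D(P)=\emptyset$, hence $C_\D(P)=I_\D(P)$, and substitute into $P\sim E_\D(C_\D(P),I_\D(P))$ from Proposition~\ref{prop:EC} to get $P\sim E_\D(C_\D(P),C_\D(P))$.

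The only point requiring a little care — and the place where I would slow down — is checking that when $I_\A(P)=P\setminus\A(P)$ the value-function $f_{C_\A(P)}$ really does coincide with $f_P$ restricted appropriately and that it matches the function used in the $\A$-extension, so that the composition $E_\A(C_\A(P),C_\A(P))$ literally reproduces the functions and covering relations of the $\D$-/$\A$-contraction setup in Proposition~\ref{prop:EC}; but this is exactly the content of Proposition~\ref{prop:EC} once the ideal is recognized, so no new verification is needed beyond the set-theoretic identification of the ideal with the whole contracted poset. Hence the lemma follows immediately, as the statement asserts.
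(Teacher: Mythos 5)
Your proof is correct and follows essentially the same route as the paper: the paper simply notes that $\ind_{\A}(P)=0$ forces $C_\A(P)=I_\A(P)$ (and likewise for $\D$) and then states that the lemma follows immediately from Proposition~\ref{prop:EC}. Your only addition is to spell out, via Lemma~\ref{lem:chain}, why the vanishing of the index makes the contraction ideal all of $P\setminus\A(P)$, which is exactly the justification the paper leaves implicit.
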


Lemma~\ref{lem:EC0} implies that if $\ind_\A(P)=0$
(respectively,~$\ind_\D(P)=0$), then $P$ is completely determined by $C_\A(P)$
(respectively,~$C_\D(P)$) up to isomorphism.

The following definition will be used frequently throughout this section.

\begin{definition}
  For a word poset $P$, an element $x\in P$ is called a \emph{top element} if
  $x$ is the largest element in its column. In other words, $x\in P$ is a top
  element if $y\le_P x$ for all $y\in P$ with $f_P(y)=f_P(x)$. For $i \in [n]$,
  denote by $m_i(P)$ the top element in column $i$. 
\end{definition}

The following lemma shows that if $P$ is a Gelfand--Cetlin type word poset, the
top elements of $P$ must form a chain. 

\begin{lemma}\label{lem:min}
	Let $P\in\P(w_0^{(n+1)})$. Then 
\[
m_1(P) <_P m_2(P) <_P \cdots <_P m_n(P) \qquad \mbox{if $\ind_{\A}(P)=0$},
\]
\[
m_1(P) >_P m_2(P) >_P \cdots >_P m_n(P) \qquad \mbox{if $\ind_{\D}(P)=0$}.
\]
Moreover, $m_n(P)$ \textup{(}respectively, $m_1(P)$\textup{)} is the maximum element of $P$ if
$\ind_{\A}(P)=0$ \textup{(}respectively, $\ind_{\D}(P)=0$\textup{)}.
\end{lemma}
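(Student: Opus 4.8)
The statement splits into two halves that are exchanged by swapping the ascending chain $\A(P)$ with the descending chain $\D(P)$ and the column index $i$ with $n+1-i$; these two halves being completely symmetric, the plan is to carry out the argument in detail only for the case $\ind_{\A}(P)=0$ and then note that the case $\ind_{\D}(P)=0$ is identical.

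The key step is to show that the hypothesis $\ind_{\A}(P)=0$ forces the ascending chain to consist exactly of the top elements of $P$. Write $\A(P)=\{a_1<_P a_2<_P\cdots<_P a_n\}$ with $f_P(a_i)=i$, which exists by Proposition~\ref{prop_index}. By the definition of the $\A$-index,
\[
\ind_{\A}(P)=\sum_{i=1}^{n}\#\{k\in[\barn]: k>_P a_i,\ f_P(k)=i\},
\]
so $\ind_{\A}(P)=0$ says precisely that for every $i\in[n]$ no element of column $i$ lies strictly above $a_i$. Here I would invoke Lemma~\ref{lem:chain}: the elements of column $i$ form a chain, hence each of them is comparable with $a_i$; combining this with the previous sentence, every element of column $i$ is $\le_P a_i$, that is, $a_i=m_i(P)$ for all $i\in[n]$.

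Granting this identification, the first displayed chain is immediate, since $a_1<_P a_2<_P\cdots<_P a_n$ becomes $m_1(P)<_P m_2(P)<_P\cdots<_P m_n(P)$. For the final assertion I would take an arbitrary $x\in P$, set $i=f_P(x)$, and use that column $i$ is a chain with top element $m_i(P)$ to obtain $x\le_P m_i(P)=a_i\le_P a_n=m_n(P)$; thus $m_n(P)$ is the greatest element of $P$. The case $\ind_{\D}(P)=0$ then runs verbatim with $\D(P)=\{d_1<_P\cdots<_P d_n\}$, $f_P(d_i)=n+1-i$: one gets $d_i=m_{n+1-i}(P)$, hence $m_n(P)=d_1<_P\cdots<_P d_n=m_1(P)$, i.e.\ $m_1(P)>_P\cdots>_P m_n(P)$, and $m_1(P)=d_n$ is the greatest element.

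I do not expect a real obstacle here; essentially everything is an unwinding of the definition of the index and of Proposition~\ref{prop_index}. The one point that genuinely requires an input is the passage from ``no element of column $i$ lies above $a_i$'' to ``$a_i$ is the top element of column $i$'', which is where Lemma~\ref{lem:chain} is used and without which the lemma would be false.
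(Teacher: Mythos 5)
Your proof is correct, but it takes a more direct route than the paper. The paper's own argument does not unwind the definition of $\ind_{\A}$ at all: it invokes Lemma~\ref{lem:EC0} to write $P\sim E_\A(C_\A(P),C_\A(P))$ and then observes that, in the extension $Q=E_\A(C_\A(P),C_\A(P))$, the newly adjoined elements $a_1,\dots,a_n$ are by construction the top elements of their columns and already form a chain; the identity $a_i=m_i(Q)$ is read off from the definition of the $\A$-extension rather than derived from the vanishing of the index. You instead extract the key fact $a_i=m_i(P)$ directly from $\ind_{\A}(P)=0$ together with Lemma~\ref{lem:chain}: the index being a sum of nonnegative terms, each summand vanishes, so no element of column $i$ lies strictly above $a_i$, and comparability within a column (Lemma~\ref{lem:chain}) upgrades this to $a_i$ being the column maximum. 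This is a genuine simplification in that it bypasses the contraction/extension machinery (Lemma~\ref{lem:EC0} rests on Proposition~\ref{prop:EC}, whose verification the paper omits), at the cost of not exhibiting the structural decomposition $P\sim E_\A(C_\A(P),C_\A(P))$ that the paper reuses later (e.g.\ in Proposition~\ref{prop:phi} and Lemma~\ref{lem:con}). Your treatment of the final assertion ($x\le_P m_{f_P(x)}(P)\le_P m_n(P)$ via Lemma~\ref{lem:chain}) coincides with the paper's, and your symmetry reduction for the $\D$-case, including the index reversal $d_i=m_{n+1-i}(P)$, is handled correctly.
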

\begin{proof}
  We will only consider the case $\ind_\A(P)=0$ because the other case
  $\ind_\D(P)=0$ can be proved similarly.

  Since $\ind_{\A}(P)=0$, by Lemma~\ref{lem:EC0}, we have $P \sim E_\A(C_\A(P),
  C_\A(P))$. By definition, $Q:=E_\A(C_\A(P), C_\A(P))$ is the word poset
  obtained from $C_\A(P)$ by adding $n$ elements $a_1,\dots,a_n$ with additional
  covering relations $a_1\lessdot_Q\dots \lessdot_Q a_n$ and $x\lessdot_Q a_i$
  for each maximal element $x$ in $C_\A(P)$ and $i\in[n]$ such that
  $|f_{Q}(x)-f_Q(a_i)|=1$, where $f_Q(x)=f_{C_\A(P)}(x)$ for $x\in C_\A(P)$ and
  $f_Q(a_i)=i$ for $i\in [n]$. By the construction, we have $a_i=m_i(Q)$ for
  $i\in [n]$, and therefore $m_1(Q)<_Q \dots<_Q m_n(Q)$. Since $P\sim Q$, this
  shows the first statement.

  For the second statement, let $x$ be an arbitrary element in $Q$. Suppose
  $f_Q(x)=i$. Since $Q\sim P\in \P(w_0^{(n+1)})$, by Lemma~\ref{lem:chain},
  $\{y\in Q \colon f_Q(y) = i\}$ is a chain in $P$ for each $i\in[n]$. By definition
  $m_i(Q)$ is the maximum element in this chain. Then $x\le_Q m_i(Q)\le_Q
  m_n(Q)$. Thus $x\le_Qm_n(Q)$, and therefore $m_n(Q)$ is the maximum element in
  $Q$. Since $P\sim Q$, this shows the second statement.
\end{proof}

By Lemma~\ref{lem:min}, if $P\in \PGC(n)$ and $n\ge2$, then we have
$\ind_\A(P)=0$ or $\ind_\D(P)=0$, but not both. This means that there is a
unique $\delta=(\delta_1,\dots,\delta_{n-1})\in\{\A,\D\}^{n-1}$ such that
$\ind_{\delta}(P) =(0,\dots,0)$. Therefore the map 
\[
\phi\colon[\PGC(n)]\to \{\A,\D\}^{n-1}
\] 
sending $[P]$ to such $\delta$ is well-defined. This map is in
fact a bijection.

\begin{proposition}\label{prop:phi}
  For $n\ge2$, the map $\phi\colon[\PGC(n)]\to \{\A,\D\}^{n-1}$ is a bijection.
\end{proposition}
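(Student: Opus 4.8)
The strategy is to establish that $\phi$ is both injective and surjective, treating the two directions separately. For surjectivity, I would argue by induction on $n$: given a target sequence $\delta = (\delta_1,\dots,\delta_{n-1}) \in \{\A,\D\}^{n-1}$, split off $\delta' = (\delta_1,\dots,\delta_{n-2})$, obtain by the inductive hypothesis a word poset $[Q] \in [\PGC(n-1)]$ with $\phi([Q]) = \delta'$, and then set $P := E_{\delta_{n-1}}(Q, Q)$ — the extension of $Q$ along its entire self (so the ideal $I$ is all of $Q$). One checks $P \in \P(w_0^{(n+1)})$ and, crucially, that $\ind_{\delta_{n-1}}(P) = 0$ while $C_{\delta_{n-1}}(P) \sim Q$; the latter follows from Proposition~\ref{prop:EC}-type reasoning (the extension and contraction are mutually inverse). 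Then for $k \le n-2$ the $k$-th entry of $\ind_\delta(P)$ equals the $k$-th entry of $\ind_{\delta'}(Q)$ because applying $C_{\delta_{n-1}}$ first recovers $Q$, so $\ind_\delta(P) = (0,\dots,0)$ and $\phi([P]) = \delta$.

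**Injectivity.** Suppose $\phi([P]) = \phi([P']) = \delta$ for $P, P' \in \PGC(n)$. Again induct on $n$. Since $\ind_{\delta_{n-1}}(P) = \ind_{\delta_{n-1}}(P') = 0$, Lemma~\ref{lem:EC0} gives $P \sim E_{\delta_{n-1}}(C_{\delta_{n-1}}(P), C_{\delta_{n-1}}(P))$ and likewise for $P'$; so $P$ (resp. $P'$) is determined up to isomorphism by $C_{\delta_{n-1}}(P)$ (resp. $C_{\delta_{n-1}}(P')$). It therefore suffices to show $C_{\delta_{n-1}}(P) \sim C_{\delta_{n-1}}(P')$. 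Writing $Q = C_{\delta_{n-1}}(P)$ and $Q' = C_{\delta_{n-1}}(P')$, which lie in $\P(w_0^{(n)})$, the definition of the $\delta$-index gives that the first $n-2$ entries of $\ind_\delta(P)$ are precisely $\ind_{\delta'}(Q)$ where $\delta' = (\delta_1,\dots,\delta_{n-2})$; since these vanish, $Q, Q' \in \PGC(n-1)$ and $\phi([Q]) = \phi([Q']) = \delta'$. By the inductive hypothesis $[Q] = [Q']$, i.e. $Q \sim Q'$, hence $P \sim P'$. The base case $n = 2$ is immediate since $\{\A,\D\}^1$ has two elements and one checks $[\PGC(2)]$ has exactly two classes (the two reduced words $(1,2,1)$ and $(2,1,2)$ of $w_0^{(3)}$, with $\ind_\D = 0$ and $\ind_\A = 0$ respectively).

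**Main obstacle.** The delicate point is verifying that the extension $P := E_{\delta_{n-1}}(Q, Q)$ of a Gelfand--Cetlin type poset $Q \in \P(w_0^{(n)})$ along its full self actually lies in $\P(w_0^{(n+1)})$ — that is, that $P$ is the word poset of an honest reduced word of the longest element, not merely an abstract poset-with-function. This requires knowing that the extension operation, at least when applied with $I$ equal to the whole poset, stays within the class of realizable word posets; one expects this from the wiring-diagram description of contraction/extension recalled after Proposition~\ref{prop:EC} (adding back an outermost wire), but the bookkeeping that the newly added chain $a_1 \lessdot \cdots \lessdot a_n$ (or $d_1 \lessdot \cdots \lessdot d_n$) sits correctly relative to the top elements of $Q$ — which by Lemma~\ref{lem:min} form a chain — needs care. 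A clean way around it: since $Q \in \PGC(n-1) \subseteq \P(w_0^{(n)})$, pick a reduced word $\mathbf{j} \in \Rn{n}$ with $P_{\mathbf j} \sim Q$, and exhibit the extension directly as $P_{\mathbf i}$ for an explicit $\mathbf i \in \Rn{n+1}$ obtained by appending (or prepending) the descending word $(n, n-1, \dots, 1)$ or the ascending word $(1, 2, \dots, n)$ to $\mathbf j$ in the appropriate way, then check the resulting word poset matches $E_{\delta_{n-1}}(Q,Q)$ by the definition of the extension. I would also double-check the indexing compatibility in the definition of $\ind_\delta$ (the composition order of the $C_{\delta_k}$'s) so that contracting with $C_{\delta_{n-1}}$ genuinely reduces the rank-$n$ problem to the rank-$(n-1)$ problem; this is routine but is where an off-by-one slip would hide.
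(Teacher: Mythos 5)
Your proof is correct and follows essentially the same route as the paper's: both arguments rest on Lemma~\ref{lem:EC0} and the contraction/extension duality to show that $[P]$ is recursively determined by $\delta$ (injectivity) and to build $P$ from $\delta$ by iterated full extensions $E_{\delta_k}(P_k,P_k)$ (surjectivity); you merely package the recursion as an induction on $n$ where the paper unwinds it all the way to $P_1$. The realizability issue you flag (that $E_{\delta_{n-1}}(Q,Q)$ lies in $\P(w_0^{(n+1)})$) is also left implicit in the paper, which relies on the wiring-diagram description of extensions; your suggested fix via appending the ascending or descending word to a shifted reduced word is a sound way to discharge it.
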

\begin{proof}
  We first show that $\phi$ is injective. Suppose that $P\in \PGC(n)$ satisfies
  $\phi([P])=\delta=(\delta_1,\dots,\delta_{n-1})\in\{\A,\D\}^{n-1}$. By
  definition, we have $\ind_{\delta}(P) =(0,\dots,0)$.
  We will show that $[P]$ is determined by $\delta$.

  Define the word posets $P_k\in\PGC(k)$ for $k\in[n]$ recursively as follows.
  First, we set $P_n=P$. For $k\in [n-1]$, define
\[
P_k = C_{\delta_k}(P_{k+1}).
\] 
Since $P_1\in \P(w_0^{(2)})$, $P_1$ is a word poset with one element, say $x$, 
and $f_{P_1}(x)=1$.
By Lemma~\ref{lem:EC0}, for $k\in[n-1]$, we have
\[
  P_{k+1} \sim E_{\delta_k}(C_{\delta_{k}}(P_{k+1}), C_{\delta_{k}}(P_{k+1}))
  =E_{\delta_k}(P_{k}, P_k).
\] 
Thus $P=P_n$ is determined uniquely by $\delta$ up to isomorphism.
This shows that $\phi$ is injective.

To show that $\phi$ is surjective take an arbitrary sequence
$\delta=(\delta_1,\dots,\delta_{n-1})\in\{\A,\D\}^{n-1}$. Define the word posets
$P_k\in\PGC(k)$ for $k\in[n]$ by $P_1\in \P(w_0^{(2)})$ and
\[
  P_{k+1} =E_{\delta_k}(P_{k}, P_k) \qquad \mbox{for $k\in [n-1]$}.
\] 
Here we may choose any $P_1\in \P(w_0^{(2)})$ because if $P_1,P_1'\in
\P(w_0^{(2)})$ then $P_1\sim P_1'$. It is easy to check that $\phi(P_n)=\delta$.
Thus $\phi$ is surjective, which completes the proof.
\end{proof}

The proof of the above proposition shows that if $P,Q\in\P(w_0^{(n+1)})$ satisfy
$\ind_{\delta}(P) = \ind_{\delta}(Q)=(0,\dots,0)$ for some
$\delta\in\{\A,\D\}^{n-1}$, then $P\sim Q$ since both $P$ and $Q$ are determined by
$\delta$. In general, the condition $\ind_{\delta}(P) =
\ind_{\delta}(Q)$ for some $\delta\in\{\A,\D\}^{n-1}$ does not imply
$P\sim Q$ as the following example shows. 

\begin{example}\label{example_total_index}
Consider the two words
\[
\mathbf i =(3,2,1,2,3,4,3,2,3,1), \quad
\mathbf j = (1,3,2,1,4,3,4,2,3,1) \in \RR(w_0^{(5)}).
\]
For any $(\ad_1,\ad_2) \in \{\A,\D\}^3$, we have $\ind_{(\ad_1,\ad_2,\A)}(\Pi)=
\ind_{(\ad_1,\ad_2,\A)}(\Pj)$, but $\Pi\not\sim \Pj$. See
Figure~\ref{fig_example_word_posets} for these word posets. Accordingly, a
single $\delta$-index of $P$ does not always determine the word poset class
$[P]$.

\begin{figure}
	\begin{tabular}{cc}
				\begin{tikzpicture}[auto,node distance=0.1cm and 0.5cm, inner sep=2pt, scale = 0.7]
		\tikzstyle{state}=[draw, circle]
		
		\node[state] (1) at (2,0) {};
		\node[state] (2) at (1,0.5) {};
		\node[state] (3) at (0,1) {};
		\node[state] (4) at (1,1.5) {};
		\node[state] (5) at (2,2) {};
		\node[state] (6) at (3,2.5) {};
		\node[state] (7) at (2,3) {};
		\node[state] (8) at (1,3.5) {};
		\node[state] (9) at (2,4) {};
		\node[state] (10) at (0,4){};

		\draw (1)--(2)--(3)--(4)--(5)--(6)--(7)--(8)--(9)
		(8)--(10);

		\draw[dotted, gray] (3)--(10)--(0,5) node[above] {$1$};
		\draw[dotted, gray] (2)--(4)--(8)--(1, 5) node[above] {$2$};
		\draw[dotted, gray] (1)--(5)--(7)--(9)--(2,5) node[above] {$3$};				
		\draw[dotted, gray] (6)--(3,5) node[above] {$4$};
		\end{tikzpicture} 
		&
		
		\begin{tikzpicture}[auto,node distance=0.1cm and 0.5cm, inner sep=2pt,
		halo/.style={line join=round,
			double,line cap=round,double distance=#1,shorten >=-#1/2,shorten <=-#1/2},
		halo/.default=7mm, 
		scale = 0.7]
		\tikzstyle{state}=[draw, circle]
		
		\node[state] (1) at (0,0) {};
		\node[state] (2) at (2,0) {};
		\node[state] (3) at (1,0.5) {};
		\node[state] (4) at (0,1) {};
		\node[state] (5) at (3,0.5) {};
		\node[state] (6) at (2,1) {};
		\node[state] (7) at (3,1.5) {};
		\node[state] (8) at (1,1.5) {};
		\node[state] (9) at (2,2) {};
		\node[state] (10) at (0,2) {};
		
		\draw (1)--(3)--(4)--(8)--(9)
		(2)--(5)--(6)--(8)--(10)
		(2)--(3)
		(7)--(9)
		(3)--(6)--(7);

		\draw[dotted, gray] (1)--(4)--(10)--(0,3) node[above] {$1$};
		\draw[dotted, gray] (3)--(8)--(1, 3) node[above] {$2$};
		\draw[dotted, gray] (2)--(6)--(9)--(2,3) node[above] {$3$};
		\draw[dotted, gray] (5)--(7)--(3, 3) node[above] {$4$};

		\end{tikzpicture}

		\\[1em]
		$P_{(3,2,1,2,3,4,3,2,3,1)}$. & $P_{(1,3,2,1,4,3,4,2,3,1)}$.
	\end{tabular}
	\caption{The word posets $P_{(3,2,1,2,3,4,3,2,3,1)}$ and $P_{(1,3,2,1,4,3,4,2,3,1)}$.}
	\label{fig_example_word_posets}
\end{figure}

\end{example}

Although a single $\delta$-index of $P$ is not enough to determine the word
poset $[P]$, in the next section, we will show that the $\delta$-indices
$\ind_\delta(P)$ for all $\delta\in\{\A,\D\}^{n-1}$ determine $[P]$ (see
Theorem~\ref{thm_main_injection}). Note that in
Example~\ref{example_total_index} we have $\ind_{\D}(\Pi) = 1 \neq 2 =
\ind_\D(\Pj)$, so $\Pi$ and $\Pj$ do not have the same $\delta$-indices for all
$\delta$.

Proposition~\ref{prop:phi} immediately gives the cardinality of the Gelfand--Cetlin
type commutation classes.

\begin{corollary}\label{cor_main}
  For $n\ge2$, we have
\[
|[\RGC(n)]|=|[\PGC(n)]| = 2^{n-1}.
\]
\end{corollary}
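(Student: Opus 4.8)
The plan is to combine the two bijections we have already established. By Proposition~\ref{prop:RGC=PGC} we know that $|[\RGC(n)]| = |[\PGC(n)]|$, so it suffices to compute $|[\PGC(n)]|$. The key input is Proposition~\ref{prop:phi}, which asserts that for $n \ge 2$ the map $\phi \colon [\PGC(n)] \to \{\A,\D\}^{n-1}$ sending a word poset class $[P]$ to the unique sequence $\delta \in \{\A,\D\}^{n-1}$ with $\ind_\delta(P) = (0,\dots,0)$ is a bijection. Since $\{\A,\D\}^{n-1}$ is a set with exactly $2^{n-1}$ elements, we immediately get $|[\PGC(n)]| = 2^{n-1}$, and hence $|[\RGC(n)]| = 2^{n-1}$ as well.

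Concretely, the proof is a short two-line argument: first cite Proposition~\ref{prop:RGC=PGC} to reduce the statement about reduced words to the statement about word posets, then cite Proposition~\ref{prop:phi} to identify $[\PGC(n)]$ with $\{\A,\D\}^{n-1}$, whose cardinality is visibly $2^{n-1}$. No further calculation is needed.

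There is essentially no obstacle here, since all the real work has been done in establishing Proposition~\ref{prop:phi} (and the structural lemmas Lemma~\ref{lem:min}, Lemma~\ref{lem:EC0} that feed into it, which already rely on the well-definedness of $\phi$, i.e., the fact from Lemma~\ref{lem:min} that for $P \in \PGC(n)$ with $n \ge 2$ exactly one of $\ind_\A(P) = 0$ or $\ind_\D(P) = 0$ holds). The only minor point worth noting is the hypothesis $n \ge 2$: for $n = 1$ the target $\{\A,\D\}^0$ is a singleton and $|[\RGC(1)]| = 1$ trivially, consistent with the fact that $\RR(w_0^{(2)})$ has a unique element; but the stated corollary restricts to $n \ge 2$ precisely because Proposition~\ref{prop:phi} is stated in that range.

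\begin{proof}
  By Proposition~\ref{prop:RGC=PGC}, $|[\RGC(n)]| = |[\PGC(n)]|$. By
  Proposition~\ref{prop:phi}, for $n \ge 2$ the map $\phi \colon [\PGC(n)] \to
  \{\A,\D\}^{n-1}$ is a bijection, so $|[\PGC(n)]| = |\{\A,\D\}^{n-1}| =
  2^{n-1}$. Combining these, $|[\RGC(n)]| = |[\PGC(n)]| = 2^{n-1}$.
\end{proof}
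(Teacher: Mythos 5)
Your proof is correct and follows exactly the paper's route: the paper derives this corollary immediately from Proposition~\ref{prop:phi} (the bijection $[\PGC(n)]\to\{\A,\D\}^{n-1}$) together with Proposition~\ref{prop:RGC=PGC}. Nothing is missing.
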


We note that Corollary~\ref{cor_main} was proved in the recent paper
\cite[Proposition~20]{GMS} using a different method.

Using the construction in the proof of Proposition~\ref{prop:phi}  and standard Young tableaux, we can find a
recurrence relation for the number of Gelfand--Cetlin type reduced words. To this end, we need the following lemma, which allows
us to draw the Hasse diagram of a Gelfand--Cetlin type word poset $P$
corresponding to $\delta\in\{\A,\D\}^{n-1}$ using only $\delta$.

\begin{lemma}\label{lem:con}
  Let $P\in\P(w_0^{(n+1)})$ and let $\delta=(\delta_1,\dots,\delta_{n-1})$ be
  the element in $\{\A,\D\}^{n-1}$ satisfying $\ind_\delta(P)=(0,\dots,0)$. Let
  $P_n=P$, and for $k\in [n-1]$, define
\[
P_k = C_{\delta_k}(P_{k+1}).
\] 
Then $P_1$ is isomorphic to the unique word poset \textup{(}up to isomorphism\textup{)} in
$\P(w_0^{(2)})$ 
and, for $k\in[n-1]$, the word poset $P_{k+1}$ is constructed as follows
\textup{(}see Figure~\ref{fig:P_k}\textup{)}.
\begin{itemize}
\item If $\delta_{k-1}=\A$ and $\delta_{k}=\A$, then the Hasse diagram of
  $P_{k+1}$ is obtained from that of $P_{k}$ by adding the chain
  $m_1(P_{k+1})\lessdot_{P_{k+1}} \dots \lessdot_{P_{k+1}} m_{k+1}(P_{k+1})$ with additional
  covering relations $m_{i}(P_{k+1})\gtrdot_{P_{k+1}} m_i(P_{k})$ for $i\in
  [k]$.
\item If $\delta_{k-1}=\A$ and $\delta_{k}=\D$, then the Hasse diagram of
  $P_{k+1}$ is obtained from that of $P_{k}$ by adding the chain
  $m_1(P_{k+1})\gtrdot_{P_{k+1}} \dots \gtrdot_{P_{k+1}} m_{k+1}(P_{k+1})$ with an additional
  covering relation $m_{k}(P_{k})\lessdot_{P_{k+1}} m_{k+1}(P_{k+1})$.
\item If $\delta_{k-1}=\D$ and $\delta_{k}=\A$, then the Hasse diagram of
  $P_{k+1}$ is obtained from that of $P_{k}$ by adding the chain
  $m_1(P_{k+1})\lessdot_{P_{k+1}} \dots \lessdot_{P_{k+1}} m_{k+1}(P_{k+1})$ with an additional
  covering relation $m_{1}(P_{k+1})\gtrdot_{P_{k+1}} m_1(P_{k})$.
\item If $\delta_{k-1}=\D$ and $\delta_{k}=\D$, then the Hasse diagram of
  $P_{k+1}$ is obtained from that of $P_{k}$ by adding the chain
  $m_1(P_{k+1})\gtrdot_{P_{k+1}} \dots \gtrdot_{P_{k+1}} m_{k+1}(P_{k+1})$ with additional
  covering relations $m_{i}(P_{k})\lessdot_{P_{k+1}} m_{i+1}(P_{k+1})$ for $i\in
  [k]$.
\end{itemize}
\end{lemma}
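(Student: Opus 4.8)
The plan is to prove Lemma~\ref{lem:con} by unwinding the definitions of $C_{\delta_k}$ and its inverse $E_{\delta_k}$ from Section~\ref{secContractionsExtensionsAndIndices}, using the key structural fact established in Lemma~\ref{lem:min}: since $\ind_\delta(P)=(0,\dots,0)$, each $P_{k+1}$ satisfies $\ind_{\delta_k}(P_{k+1})=0$, so by Lemma~\ref{lem:EC0} we have $P_{k+1}\sim E_{\delta_k}(P_k,P_k)$. The content of the lemma is therefore a completely explicit description of the Hasse diagram of $E_{\delta_k}(P_k,P_k)$ in terms of the Hasse diagram of $P_k$, and the four cases correspond to the four choices of $(\delta_{k-1},\delta_k)$.

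First I would record that $P_1$ is the unique word poset in $\P(w_0^{(2)})$ (it has a single element in column $1$), which handles the base case. For the inductive step, fix $k$ and write $Q=P_{k+1}\sim E_{\delta_k}(P_k,P_k)$. When $\delta_k=\A$, the $\A$-extension with respect to $I=P_k$ (the whole poset) adjoins new elements $a_1,\dots,a_{k+1}$ with $f_Q(a_i)=i$, forms the chain $a_1\lessdot\dots\lessdot a_{k+1}$, shifts all of $P_k$ up by one column, and adds covering relations from the maximal elements of $P_k$ up to the appropriate $a_i$ (those in adjacent columns). By Lemma~\ref{lem:min} applied to $P_k$ — which has $\ind_{\delta_{k-1}}(P_k)=0$ — the top elements $m_1(P_k),\dots,m_k(P_k)$ of $P_k$ themselves form a chain, either ascending (if $\delta_{k-1}=\A$) or descending (if $\delta_{k-1}=\D$), and the relevant maximal element of $P_k$ is $m_k(P_k)$ in the first case and $m_1(P_k)$ in the second; moreover after the column shift $m_i(P_k)$ sits in column $i+1$. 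Matching $a_i$ with $m_i(Q)$ and tracking which of $m_1(P_k)$ or $m_k(P_k)$ is maximal in $P_k$ then yields exactly the stated covering relations: the $(\A,\A)$ case gives $m_i(Q)\gtrdot m_i(P_k)$ for all $i\in[k]$ because in the ascending case \emph{every} top element $m_i(P_k)$ is maximal among the (shifted) elements of column $i+1$ of $Q$ that lie below $a_{i+1}$, wait—more precisely one checks each $m_i(P_k)$ is a maximal element of $I$ in column $i+1$ with $a_{i+1}$ in the adjacent column $i+1$... and the $(\D,\A)$ case gives only $m_1(Q)\gtrdot m_1(P_k)$ since there $m_1(P_k)$ is the unique maximal element of $P_k$. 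The cases $\delta_k=\D$ are entirely symmetric, using the $\D$-extension, which shifts $I=P_k$ into columns $2,\dots$ only partially—here one must be careful: $E_\D(P_k,P_k)$ assigns $f(x)=f_{P_k}(x)$ to $x\in I$ (no shift) and places $d_i$ in column $n+1-i$; so the new descending chain $m_1(Q)\gtrdot\dots\gtrdot m_{k+1}(Q)$ is the $d$-chain and the new covering relations attach the minimal elements of $P\setminus I=\emptyset$... rather, since $I=P_k$ is everything, the relations run from the maximal elements of $P_k$ up into the $d$-chain, giving the stated relations once one substitutes which top elements of $P_k$ are maximal.

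The main obstacle I expect is the bookkeeping of \emph{column indices} under the shifts built into $C$ and $E$: in the $\A$-extension the old poset shifts right by one column while in the $\D$-extension it does not, and the newly adjoined chain occupies columns $1,\dots,k+1$ read left-to-right for an $\A$-chain but $k+1,k,\dots,1$ for a $\D$-chain. One must verify in each of the four cases that the adjacency condition $|f_Q(x)-f_Q(y)|=1$ in the definition of the extension selects precisely the covering relations claimed — in particular that in the $(\A,\A)$ and $(\D,\D)$ cases \emph{all} of $m_1(P_k),\dots,m_k(P_k)$ get connected, whereas in the mixed cases $(\A,\D)$ and $(\D,\A)$ only a \emph{single} connection survives because the relevant chain of top elements runs "against" the orientation of the newly added chain, so all but one of the $m_i(P_k)$ fail to be in a column adjacent to the correct new node. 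This is a finite, mechanical case check; once the column arithmetic is set up correctly (tabulating $f_{P_k}(m_i(P_k))=i$, the post-shift columns, and the columns of the new chain), each of the four bullet points in the statement follows directly from Definition~\ref{def:extensions} and Lemma~\ref{lem:min}. I would present the $(\A,\A)$ and $(\A,\D)$ cases in full and remark that $(\D,\A)$ and $(\D,\D)$ follow by the left-right symmetry exchanging $\A$ and $\D$ (equivalently, reflecting wiring diagrams), citing Figure~\ref{fig:P_k} for the picture.
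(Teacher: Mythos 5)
Your proposal follows essentially the same route as the paper: the paper's proof likewise invokes Lemma~\ref{lem:EC0} to write $P_{k+1}\sim E_{\delta_k}(P_k,P_k)$ and then declares the four case descriptions ``straightforward to check'' from Definition~\ref{def:extensions} together with the structure of the top elements from Lemma~\ref{lem:min}, which is exactly the unwinding you carry out. The only blemish is the garbled sentence in your $(\A,\A)$ case --- the new chain element adjacent to $m_i(P_k)$, which sits in column $i+1$ after the shift, is $a_i$ in column $i$, and the relevant maximality is of $m_i(P_k)$ within its \emph{column} of $I$ rather than in all of $I$ --- but once stated that way your verification matches the paper's intended one.
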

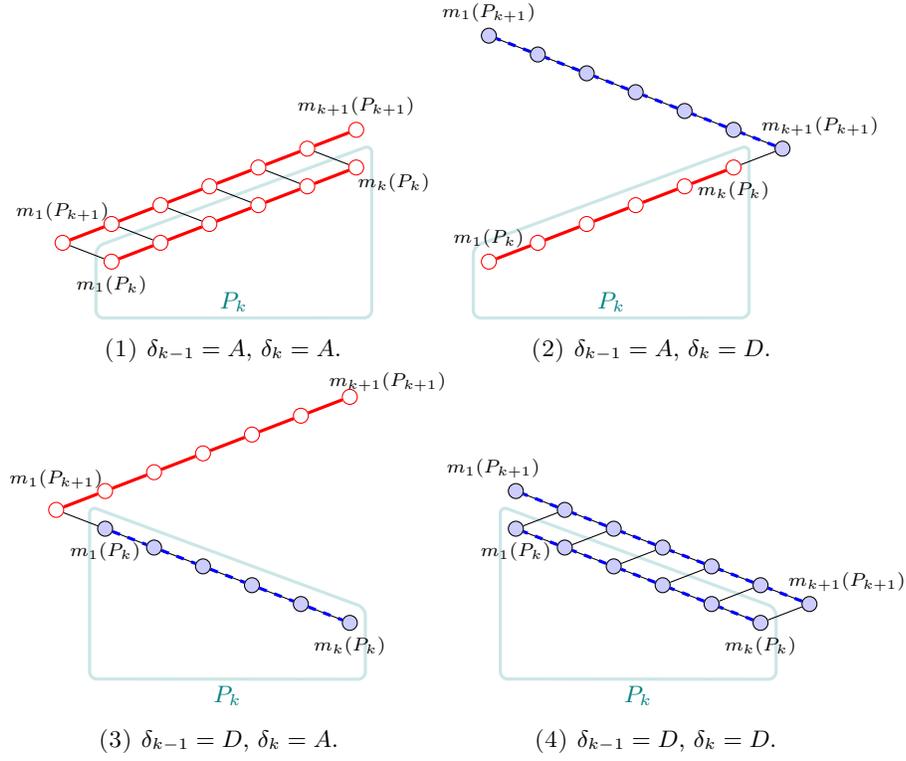
\begin{figure}
	\centering
	\begin{subfigure}[b]{0.45\textwidth}
		\centering
		\begin{tikzpicture}[auto,node distance=0.1cm and 0.5cm, inner sep=2pt]
		\tikzstyle{A}=[draw, circle,  draw=red]
		\tikzstyle{D}=[draw, circle,  fill=blue!20]
		
		\node[A] (1) {};
		\node[A] (2) [above right = of 1] {};
		\node[A] (3) [above right = of 2] {};
		\node[A] (4) [above right = of 3] {};
		\node[A] (5) [above right = of 4] {};
		\node[A] (6) [above right = of 5] {};
		\node[A] (7) [above right = of 6] {};
		
		\node[A] (8) [below right = of 1] {};
		\node[A] (9) [below right = of 2] {};
		\node[A] (10) [below right = of 3] {};
		\node[A] (11) [below right = of 4] {};
		\node[A] (12) [below right = of 5] {};
		\node[A] (13) [below right = of 6] {};
		
		\draw[rounded corners=1mm, draw=teal!20!white, very thick] 
		($(13.east) + (0.1,-2)$)--($(13.east) + (0.1,0.3)$) -- ($(8.west)+(-0.1,0.2)$)--($(8.west)+(-0.1,-0.75)$) --cycle  node[midway] {\textcolor{teal}{\small $P_k$}};
		
		\draw (1)--(2)--(3)--(4)--(5)--(6)--(7)
		(8)--(9)--(10)--(11)--(12)--(13)
		(1)--(8)
		(2)--(9)
		(3)--(10)
		(4)--(11)
		(5)--(12)
		(6)--(13);
		
		    \draw[very thick, red](1)--(2)--(3)--(4)--(5)--(6)--(7)
		    (8)--(9)--(10)--(11)--(12)--(13);

		\node [ above  = of 1] {\scriptsize $m_1(P_{k+1})$};
		\node at ($(13) + (0.5,-0.2)$) {\scriptsize $m_{k}(P_{k})$};
		\node [below = 0cm of 8] {\scriptsize $m_{1}(P_{k})$};
		\node [above  = 0cm of 7] {\scriptsize $m_{k+1}(P_{k+1})$};
		
		\end{tikzpicture}
		\caption{$\delta_{k-1}= A$, $\delta_{k} = A$.}
	\end{subfigure}%
	\begin{subfigure}[b]{0.45\textwidth}
		\centering
		\begin{tikzpicture}[auto,node distance=0.1cm and 0.5cm, inner sep=2pt]
		\tikzstyle{A}=[draw, circle,  draw=red]
		\tikzstyle{D}=[draw, circle,  fill=blue!20]

		\node[D] (1) {};
		\node[D] (2) [above left = of 1] {};
		\node[D] (3) [above left = of 2] {};
		\node[D] (4) [above left = of 3] {};
		\node[D] (5) [above left = of 4] {};  	
		\node[D] (6) [above left = of 5] {};
		\node[D] (7) [above left = of 6] {};
		
		\node[A] (8) [below left = of 1] {};
		\node[A] (9) [below left = of 8] {};
		\node[A] (10) [below left = of 9] {}; 	
		\node[A] (11) [below left = of 10] {};
		\node[A] (12) [below left = of 11] {};
		\node[A] (13) [below left = of 12] {};
		
		\draw[rounded corners=1mm, draw=teal!20!white, very thick] 
($(8.east) + (0.1,-2)$)--($(8.east) + (0.1,0.3)$) -- ($(13.west)+(-0.1,0.2)$)--($(13.west)+(-0.1,-0.75)$) --cycle  node[midway] {\textcolor{teal}{\small $P_k$}};

		\draw (1)--(2)--(3)--(4)--(5)--(6)--(7)
		(1)--(8)--(9)--(10)--(11)--(12)--(13);

		 \draw[very thick, dashed, blue] (1)--(2)--(3)--(4)--(5)--(6)--(7);
		 \draw[very thick, red](8)--(9)--(10)--(11)--(12)--(13);
		
		\node   at ($(1) + (0.5,0.3)$){\scriptsize $m_{k+1}(P_{k+1})$};
		\node [ above = 0cm of 7] {\scriptsize $m_{1}(P_{k+1})$};
		\node [below = 0cm of 8] {\scriptsize $m_{k}(P_{k})$};
		\node [above  = -0cm of 13] {\scriptsize $m_{1}(P_{k})$};
		
		\end{tikzpicture}
		\caption{$\delta_{k-1} = A$, $\delta_{k} = D$.}
	\end{subfigure}
	\begin{subfigure}[b]{0.45\textwidth}
		\centering
		\begin{tikzpicture}[auto,node distance=0.1cm and 0.5cm, inner sep=2pt]
		\tikzstyle{A}=[draw, circle,  draw=red]
		\tikzstyle{D}=[draw, circle,  fill=blue!20]

		\node[A] (1) {};
		\node[A] (2) [above right = of 1] {};
		\node[A] (3) [above right = of 2] {};
		\node[A] (4) [above right = of 3] {};
		\node[A] (5) [above right = of 4] {};  	
		\node[A] (6) [above right = of 5] {};
		\node[A] (7) [above right = of 6] {};
		
		\node[D] (8) [below right = of 1] {};
		\node[D] (9) [below right = of 8] {};
		\node[D] (10) [below right = of 9] {}; 	
		\node[D] (11) [below right = of 10] {};
		\node[D] (12) [below right = of 11] {};
		\node[D] (13) [below right = of 12] {};
		
		\draw[rounded corners=1mm, draw=teal!20!white, very thick] 
($(8.west) + (-0.1,-2)$)--($(8.west) + (-0.1,0.3)$) -- ($(13.east)+(0.1,0.2)$)--($(13.east)+(0.1,-0.75)$) --cycle  node[midway] {\textcolor{teal}{\small $P_k$}};

		\draw (1)--(2)--(3)--(4)--(5)--(6)--(7)
		(1)--(8)--(9)--(10)--(11)--(12)--(13);
		
 \draw[very thick, dashed, blue]		(8)--(9)--(10)--(11)--(12)--(13);
 \draw[very thick, red] (1)--(2)--(3)--(4)--(5)--(6)--(7);
		\node [ above  = of 1] {\scriptsize $m_{1}(P_{k+1})$};
		\node at ($(7) + (0.5,0.2)$) {\scriptsize $m_{k+1}(P_{k+1})$};
		\node [below = 0cm of 8] {\scriptsize $m_{1}(P_{k})$};
		\node [below  = -0cm of 13] {\scriptsize $m_{k}(P_{k})$};
		
		\end{tikzpicture}
		\caption{$\delta_{k-1} = D$, $\delta_{k} = A$.}
	\end{subfigure}
	\begin{subfigure}[b]{0.45\textwidth}
		\centering
		\begin{tikzpicture}[auto,node distance=0.1cm and 0.5cm, inner sep=2pt]
		\tikzstyle{A}=[draw, circle,  draw=red]
		\tikzstyle{D}=[draw, circle,  fill=blue!20]
		
		\node[D] (1) {};
		\node[D] (2) [above left = of 1] {};
		\node[D] (3) [above left = of 2] {};
		\node[D] (4) [above left = of 3] {};
		\node[D] (5) [above left = of 4] {};
		\node[D] (6) [above left = of 5] {};
		\node[D] (7) [above left = of 6] {};
		
		\node[D] (8) [below left = of 1] {};
		\node[D] (9) [below left = of 2] {};
		\node[D] (10) [below left = of 3] {};
		\node[D] (11) [below left = of 4] {};
		\node[D] (12) [below left = of 5] {};
		\node[D] (13) [below left = of 6] {};
		
		\draw[rounded corners=1mm, draw=teal!20!white, very thick] 
($(13.west) + (-0.1,-2)$)--($(13.west) + (-0.1,0.3)$) -- ($(8.east)+(0.1,0.2)$)--($(8.east)+(0.1,-0.75)$) --cycle  node[midway] {\textcolor{teal}{\small $P_k$}};
		
		\draw (1)--(2)--(3)--(4)--(5)--(6)--(7)
		(8)--(9)--(10)--(11)--(12)--(13)
		(1)--(8)
		(2)--(9)
		(3)--(10)
		(4)--(11)
		(5)--(12)
		(6)--(13);

		 \draw[very thick, dashed, blue] (1)--(2)--(3)--(4)--(5)--(6)--(7)
		 		(8)--(9)--(10)--(11)--(12)--(13);
		
		\node  at ($(1) + (0.5,0.3)$) {\scriptsize $m_{k+1}(P_{k+1})$};
		\node at ($(7) + (-0.3,0.3)$) {\scriptsize $m_{1}(P_{k+1})$};
		\node [below = 0cm of 8] {\scriptsize $m_{k}(P_{k})$};
		\node [below  = -0cm of 13] {\scriptsize $m_{1}(P_{k})$};
		
		\end{tikzpicture}
		\caption{$\delta_{k-1} = D$, $\delta_{k} = D$.}
	\end{subfigure}%
	\caption{The Hasse diagrams of $P_k$ and $P_{k+1}$.}
	\label{fig:P_k}
\end{figure}
\begin{proof}
  Consider the case that $\delta_{k-1}=\A$ and $\delta_{k}=\A$. Since $P_k =
  C_{\delta_k}(P_{k+1}) = C_{\A}(P_{k+1})$ and $\ind_\A(P_{k+1})=0$, by
  Lemma~\ref{lem:EC0}, we have
\[
P_{k+1} \sim E_\A(P_k, P_k).
\] 
Then it is straightforward to check that we obtain the desired description for
$P_{k+1}$ by the definition of $\A$-extension in
Definition~\ref{def:extensions}. The other three cases can be checked similarly.
\end{proof}

Now we define standard Young tableaux of shifted shape.

\begin{definition}
  A \emph{partition} of $n$ is a weakly decreasing sequence $\mu =
  (\mu_1,\dots,\mu_t)$ of positive integers summing to $n$. A partition
  $\mu = (\mu_1,\dots,\mu_t)$ is \emph{strict} if
  $\mu_1>\dots>\mu_t$. For a strict partition $\mu =
  (\mu_1,\dots,\mu_t)$, the \emph{shifted diagram} of $\mu$, denoted
  $\mu^\ast$, is the set
	\[
	\mu^{\ast} \coloneqq \{ (i,j) \mid 1 \leq i \leq t, i \leq j \leq \mu_i + i - 1\}.
	\]
  We will identify $\mu^\ast$ as an array of squares where there is a
  square in row $i$ and column $j$ for each $(i,j)\in\mu^\ast$. For a strict
  partition $\mu = (\mu_1,\dots,\mu_t)$ of $n$, a \emph{standard
    Young tableau} of shifted shape $\mu$ is a bijection $T \colon \mu^*\to
  [n]$ such that $T(i,j)\le T(i',j')$ if $i\le i'$ and $j\le j'$. We will
  represent a standard Young tableau $T$ of shifted shape $\mu$ by filling
  $T(i,j)$ in the square in row $i$ and column $j$ of $\mu^\ast$.
  Denote by $g^\mu$ the number of standard Young tableaux of shifted shape $\mu$.
\end{definition}

For example, the shifted diagram of shape $\mu = (3,2,1)$ is drawn as follows.
\[
\ytableausetup{smalltableaux}
\ytableausetup{aligntableaux = center}
\mu^{\ast} = \ydiagram{3, 1+2, 2+1}
\]
There are two standard Young tableaux of shifted shape $\mu=(3,2,1)$:
\[
\ytableausetup{notabloids}
\begin{ytableau}
1 & 2 & 3 \\
\none & 4 & 5 \\
\none & \none & 6 
\end{ytableau} 
\quad
\begin{ytableau}
1 & 2 & 4 \\
\none & 3 & 5 \\
\none & \none & 6 
\end{ytableau}
\]

Thrall~\cite{Thrall} showed that the number $g^{\mu}$ of standard Young
tableaux of shifted shape $\mu = (\mu_1,\dots,\mu_t)$ is given as follows:
\begin{equation}\label{eq_g_lambda}
g^{\mu} = \frac{|\mu|!}{\mu_1 ! \mu_2 ! \cdots \mu_t !}
\prod_{i < j} \frac{\mu_i - \mu_j}{\mu_i +\mu_j}.
\end{equation}
For instance, if $\mu = (3,2,1)$, then we have that
\[
g^{(3,2,1)} = \frac{6!}{3!2!1!} \cdot \frac{1 \cdot 2 \cdot 1}{5 \cdot 4 \cdot 3} = 2.
\]
There is another formula for $g^{\mu}$ called the (shifted) hook length formula,
see \cite[p.267, eq.(2)]{Macdonald}.

For a strict partition $\mu = (\mu_1,\dots,\mu_t)$, we define
$Q_{\mu}$ to be the poset on $\mu^\ast$ with relations $(i,j)
\le_{Q_\mu} (i',j')$ if $i\le i'$ and $j\le j'$. For example, if $\mu =
(4,3,2)$, then the poset $Q_{\mu}$ is given as follows.
\[
\mu^{\ast} = 
\ytableausetup{nosmalltableaux,boxsize = 2em}
\begin{ytableau}
{\scriptstyle (1,1)} & {\scriptstyle (1,2)} & {\scriptstyle (1,3)}  & {\scriptstyle (1,4)} \\
\none & {\scriptstyle (2,2)} & {\scriptstyle (2,3)} & {\scriptstyle (2,4)}  \\
\none & \none  & {\scriptstyle (3,3)} & {\scriptstyle (3,4)} 
\end{ytableau}
\qquad \qquad
Q_{\mu} = 
\raisebox{-1cm}{
\begin{tikzpicture}[auto,node distance=0.1cm and 0.5cm, inner sep=2pt]

\node (1) {${\scriptstyle (3,4)}$};
\node (2) [below left = of 1] {${\scriptstyle (3,3)}$};
\node (3) [below right = of 1] {${\scriptstyle (2,4)}$};
\node (4) [below left = of 3] {${\scriptstyle (2,3)}$};
\node (5) [below left = of 4] {${\scriptstyle (2,2)}$};
\node (6) [below right = of 3] {${\scriptstyle (1,4)}$};
\node (7) [below left = of 6] {${\scriptstyle (1,3)}$};
\node (8) [below left = of 7] {${\scriptstyle (1,2)}$};
\node (9) [below left = of 8] {${\scriptstyle (1,1)}$};

\draw (1)--(2)
(3)--(4)--(5)
(6)--(7)--(8)--(9)
(1)--(3)--(6)
(2)--(4)--(7)
(5)--(8);
%
%
%

\end{tikzpicture} }
\]

There is a natural bijection between the standard Young tableaux of shifted
shape $\mu$ and the linear extensions of $Q_\mu$. Therefore the number
of linear extensions of $Q_\mu$ is equal to $g^\mu$.

Now we are ready to state our first main theorem.

\begin{theorem}\label{thm_SYT}
	Let $\gc(n)$ be the number of Gelfand--Cetlin type reduced words
  in~$\Rn{n+1}$, i.e., $\gc(n)=|\RGC(n)|$. Then $\gc(0)=\gc(1)=1$ and for $n\ge2$, we have
	\[
	\gc(n) = \sum_{i=1}^n g^{(n,n-1,\dots,n-i+1)} \gc(n-i),
	\]
	where $g^{\mu}$ is the number of standard Young tableaux of shifted shape $\mu$ \textup{(}see~\eqref{eq_g_lambda}\textup{)}.
\end{theorem}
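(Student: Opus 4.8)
\emph{Plan overview.} The plan is to reduce the count to linear extensions of word posets and then to strip the ``top'' off each Gelfand--Cetlin type word poset using Lemma~\ref{lem:con}. By Proposition~\ref{prop_poset_determines_commutations_class} the number of reduced words in a commutation class $[\mathbf i]$ equals the number $e(\Pi)$ of linear extensions of the word poset $\Pi$, and by Propositions~\ref{prop:RGC=PGC} and~\ref{prop:phi} the Gelfand--Cetlin type commutation classes of $w_0^{(n+1)}$ are indexed by $\delta\in\{\A,\D\}^{n-1}$, the index $\delta$ corresponding to the unique word poset $P_\delta$ (up to isomorphism) with $\ind_\delta(P_\delta)=(0,\dots,0)$. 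Hence $\gc(n)=\sum_{\delta\in\{\A,\D\}^{n-1}}e(P_\delta)$, and the base cases $\gc(0)=\gc(1)=1$ hold because $\RR(w_0^{(1)})$ and $\RR(w_0^{(2)})$ are singletons. I will also use the symmetry $\delta\mapsto\bar\delta$ exchanging the letters $\A$ and $\D$: it is induced by the diagram automorphism $s_j\mapsto s_{n+1-j}$ of $\mathfrak S_{n+1}$, which fixes $w_0^{(n+1)}$, preserves sizes of commutation classes, and swaps ascending and descending chains, so that $e(P_\delta)=e(P_{\bar\delta})$.

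\emph{Splitting off the top.} Fix $n\ge2$ and $\delta\in\{\A,\D\}^{n-1}$, and let $i$ be the length of the longest constant suffix $\delta_{n-1}=\dots=\delta_{n-i}$ of $\delta$; by the symmetry we may assume this suffix consists of $\D$'s. Using Lemma~\ref{lem:con}, construct $P_n=P_\delta$ from $P_{n-i}$ through its last $i$ extension steps. If $i<n-1$ then $\delta_{n-i-1}=\A$, so the step $P_{n-i}\to P_{n-i+1}$ is the ``$\A$ then $\D$'' case of the lemma: it attaches a chain on $n-i+1$ new elements joined by a single new covering relation to $m_{n-i}(P_{n-i})$, which is the maximum of $P_{n-i}$ because $\ind_\A(P_{n-i})=0$ (Lemma~\ref{lem:min}); thus $P_{n-i+1}$ is the ordinal sum of $P_{n-i}$ with a chain. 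Each of the remaining $i-1$ steps is a ``$\D$ then $\D$'' step, which only adds new elements and covering relations above the chain of top elements already produced and never touches the copy of $P_{n-i}$. Therefore $P_\delta\cong P_{n-i}\oplus Z_{n,i}$, where $Z_{n,i}$, the poset obtained from a chain on $n-i+1$ elements by $i-1$ successive ``$\D$ then $\D$'' extensions, depends only on $n$ and $i$, and $P_{n-i}\cong P_{\delta'}$ with $\delta'=(\delta_1,\dots,\delta_{n-i-1})\in\{\A,\D\}^{n-i-1}$.

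\emph{Identifying $Z_{n,i}$ and assembling the recurrence.} Unwinding the ``$\D$ then $\D$'' case of Lemma~\ref{lem:con} inductively identifies $Z_{n,i}$, up to order reversal, with the poset $Q_{(n,n-1,\dots,n-i+1)}$ of the shifted staircase $(n,n-1,\dots,n-i+1)$, since each extension glues a fresh maximal chain onto the previous top chain by rungs that are exactly the covering relations inside that shifted shape. As linear extensions are multiplicative over ordinal sums and $e(Q_\mu)=g^\mu$, this gives $e(P_\delta)=g^{(n,n-1,\dots,n-i+1)}\,e(P_{\delta'})$ whenever $i<n-1$; and when $\delta=\D^{n-1}$ the same analysis without an ``$\A$ then $\D$'' step gives $P_\delta\cong Q_{(n,n-1,\dots,1)}$ up to reversal, so $e(P_{\D^{n-1}})=g^{(n,\dots,1)}$, while the symmetry gives $e(P_{\A^{n-1}})=g^{(n,\dots,1)}=g^{(n,\dots,2)}$, the final equality holding because $(n,n)$ is the maximum of $Q_{(n,\dots,1)}$ so deleting it is a bijection with the shifted shape $(n,\dots,2)$. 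Now $\delta\mapsto(i,\delta')$ is a bijection $\{\A,\D\}^{n-1}\to\bigsqcup_{i=1}^{n}\{\A,\D\}^{\,n-i-1}$: for $1\le i\le n-2$ the preimage of $(i,\delta')$ is the single word $(\delta',\epsilon,\dots,\epsilon)$ having $i$ copies of the letter $\epsilon$ other than the last entry of $\delta'$, while for $i=n-1,n$ the singletons $\{\A,\D\}^{0},\{\A,\D\}^{-1}$ are matched with $\delta=\A^{n-1},\D^{n-1}$. Summing over these preimages and using $\sum_{\delta'\in\{\A,\D\}^{\,m-1}}e(P_{\delta'})=\gc(m)$ for all $m\ge0$ (this is the first paragraph for $m\ge2$, and direct for $m\le1$) gives
\[
\gc(n)=\sum_{i=1}^{n}g^{(n,n-1,\dots,n-i+1)}\,\gc(n-i).
\]

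\emph{Main obstacle.} The technical heart is the claim that the iterated ``$\D$ then $\D$'' extensions of Lemma~\ref{lem:con} reconstruct (up to reversal) the shifted staircase poset $Q_{(n,n-1,\dots,n-i+1)}$; checking this amounts to verifying that the rungs introduced at each stage are precisely the covering relations of the shifted shape, and is the one step needing a careful induction through Lemma~\ref{lem:con}, the rest being bookkeeping with linear extensions and the bijection above.
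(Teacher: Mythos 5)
Your proposal is correct and follows essentially the same route as the paper: both reduce $\gc(n)$ to $\sum_{\delta}e(P_\delta)$ via Propositions~\ref{prop_poset_determines_commutations_class}, \ref{prop:RGC=PGC}, and~\ref{prop:phi}, and both use Lemma~\ref{lem:con} together with Lemma~\ref{lem:min} to split $P_\delta$ as an ordinal sum of $P_{\delta'}$ with a copy of $Q_{(n,n-1,\dots,n-i+1)}$ determined by the longest constant suffix of $\delta$, then multiply linear-extension counts. The only differences are organizational: the paper runs two mirror recurrences for the quantities $\an(n)$ and $\dn(n)$ and adds them, whereas you invoke the $\A\leftrightarrow\D$ symmetry once and handle the two constant words via the identity $g^{(n,\dots,1)}=g^{(n,\dots,2)}$ — which in fact treats the boundary terms $i=n-1,n$ a bit more carefully than the paper does.
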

\begin{proof}
  Clearly, we have $\gc(0)=\gc(1)=1$. Suppose $n\ge2$. 
  Observe that
  \begin{equation}
    \label{eq:2}
\gc(n)=|\RGC(n)| = \sum_{[\mathbf{i}]\in [\RGC(n)]} |[\mathbf{i}]|.
  \end{equation}
  By Proposition~\ref{prop:RGC=PGC}, the map $[\mathbf{i}]\mapsto [\Pi]$ is a
  bijection from $[\RGC(n)]$ to $[\PGC(n)]$. Moreover, by
  Proposition~\ref{prop_poset_determines_commutations_class}, $|[\mathbf{i}]|$
  is equal to the number of linear extensions of $\Pi$. This shows that we can
  rewrite \eqref{eq:2} as
\begin{equation}
  \label{eq:3}
\gc(n)=\sum_{[P]\in [\PGC(n)]} e(P),
\end{equation}
where $e(P)$ is the number of linear extensions of $P$.

Define $\an(n)$ and $\dn(n)$ by
\[
\an(n)=\sum_{\substack{[P]\in [\PGC(n)],\\ \ind_\A(P)=0}} e(P), \qquad
\dn(n)=\sum_{\substack{[P]\in [\PGC(n)],\\ \ind_\D(P)=0}} e(P).
\]
We claim that
\begin{align}
  \label{eq:an}
	\an(n) &= \sum_{i=1}^n g^{(n,n-1,\dots,n-i+1)} \dn(n-i),\\
  \label{eq:dn}
	\dn(n) &= \sum_{i=1}^n g^{(n,n-1,\dots,n-i+1)} \an(n-i).
\end{align}
Since $\gc(n)=\an(n)+\dn(n)$, the identity in this theorem is obtained by adding
\eqref{eq:an} and \eqref{eq:dn}. Thus it suffices to show these two identities.
We will only show \eqref{eq:an} because \eqref{eq:dn} can be shown similarly.

To show \eqref{eq:an}, consider $[P]\in[\PGC(n)]$ with $\ind_\A(P)=0$. By
Proposition~\ref{prop:phi}, there is a unique $\delta\in \{\A,\D\}^{n-1}$ such
that $\ind_{\delta}(P) = (0,\dots,0)$. Let $P_n=P$, and for $k\in [n-1]$, define
\[
P_k = C_{\delta_k}(P_{k+1}).
\] 
Then, by Lemma~\ref{lem:con}, $P_{k+1}$ is obtained from $P_k$ by adding a
descending or ascending chain of length $k+1$ depending on $\delta_{k-1}$ and
$\delta_{k}$. Since $\ind_\A(P)=0$, there is a unique integer $i\in[n-1]$ such
that $\delta_{n-1}=\delta_{n-2} = \dots=\delta_{n-i}=\A$ and $\delta_{n-i-1}=
\D$, where the second condition is ignored if $i=n-1$. By Lemma~\ref{lem:con},
one can easily see that $P=P_n$ is obtained from $P_{n-i}$ by adding the poset
$Q_{(n,n-1,\dots,n-i+1)}$ above it as shown in Figure~\ref{fig:syt}. Since every
element of $P_{n-i}$ is smaller than every element of $Q_{(n,n-1,\dots,n-i+1)}$,
we have
\[
e(P)=e(P_{n-i}) e(Q_{(n,n-1,\dots,n-i+1)}) = e(P_{n-i})g^{(n,n-1,\dots,n-i+1)}.
\]
Note that $[P_{n-i}]\in [\PGC(n-i)]$ and $\ind_D(P_{n-i})=0$. Conversely, for
any such $P_{n-i}$, one can construct $P$ in this way. This shows that
\[
	\an(n) = \sum_{i=1}^n g^{(n,n-1,\dots,n-i+1)} 
\sum_{\substack{[P_{n-i}]\in[\PGC(n-i)],\\ \ind_\D(P)=0}} e(P_{n-i}),
 \]
 which is the same as~\eqref{eq:an}. Similarly, we obtain the formula~\eqref{eq:dn} and the
 proof is completed.
  \end{proof}

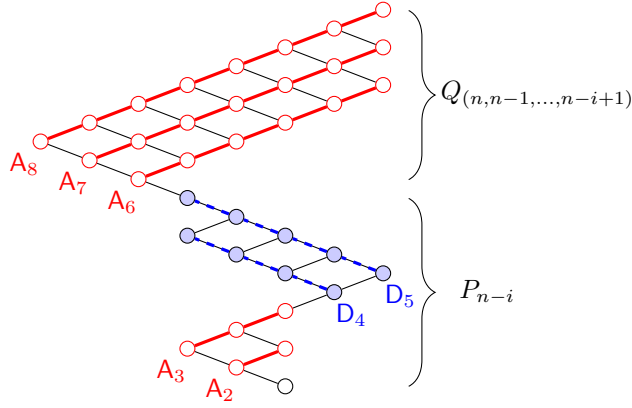
\begin{figure}
    \centering
    \begin{tikzpicture}[auto,node distance=0.1cm and 0.5cm, inner sep=2pt]
    \tikzstyle{state1}=[draw, circle,  draw=red]
    \tikzstyle{state2}=[draw, circle,  fill=blue!20]
    
    \node[draw, circle] (1) {};
    \node[state1] (2) [above left = of 1] {};
    \node[state1] (3) [above right = of 2] {};
    \node[state1] (4) [above left = of 2] {};
    \node[state1] (5) [above right = of 4] {};
    \node[state1] (6) [above right = of 5] {};
    \node[state2] (7) [above right = of 6] {};
    \node[state2] (8) [above left = of 7] {};
    \node[state2] (9) [above left = of 8] {};
    \node[state2] (10) [above left = of 9] {};
    \node[state2] (11) [above right = of 7] {};
    \node[state2] (12) [above left = of 11] {};
    \node[state2] (13) [above left = of 12] {};
    \node[state2] (14) [above left = of 13] {};
    \node[state2] (15) [above left = of 14] {};
    \node[state1] (16) [above left = of 15] {};
    \node[state1] (17) [above right = of 16] {};
    \node[state1] (18) [above right = of 17] {};
    \node[state1] (19) [above right = of 18] {};
    \node[state1] (20) [above right = of 19] {};
    \node[state1] (21) [above right = of 20] {};
    \node[state1] (22) [above left = of 16] {};
    \node[state1] (23) [above right = of 22] {};
    \node[state1] (24) [above right = of 23] {};
    \node[state1] (25) [above right = of 24] {};
    \node[state1] (26) [above right = of 25] {};
    \node[state1] (27) [above right = of 26] {};
    \node[state1] (28) [above right = of 27] {};
    \node[state1] (29) [above left = of 22] {};
    \node[state1] (30) [above right = of 29] {};
    \node[state1] (31) [above right = of 30] {};
    \node[state1] (32) [above right = of 31] {};
    \node[state1] (33) [above right = of 32] {};
    \node[state1] (34) [above right = of 33] {};
    \node[state1] (35) [above right = of 34] {};
    \node[state1] (36) [above right = of 35] {};

\draw (1)--(2)--(4)
(3)--(5)--(6)--(7)--(8)--(9)--(10)
(11)--(12)--(13)--(14)--(15)--(16)--(17)--(18)--(19)--(20)--(21)
(22)--(23)--(24)--(25)--(26)--(27)--(28)
(29)--(30)--(31)--(32)--(33)--(34)--(35)--(36)
(2)--(3)
(4)--(5)
(7)--(11)
(8)--(12)
(9)--(13)
(10)--(14)
(16)--(22)--(29)
(17)--(23)--(30)
(18)--(24)--(31)
(19)--(25)--(32)
(20)--(26)--(33)
(21)--(27)--(34)
(28)--(35);

\draw [decorate,decoration={brace,amplitude=10pt},xshift=4pt,yshift=0pt] (1.5,2.5) -- (1.5,0) node [black,midway,xshift=0.6cm] {$P_{n-i}$};
\draw [decorate,decoration={brace,amplitude=10pt},xshift=4pt,yshift=0pt] (1.5,5) -- (1.5,2.75) node [black,midway,xshift=10pt] {$Q_{(n,n-1,\dots,n-i+1)}$};

    \draw[very thick, dashed, blue]  (7) --(8) node[below right = 0.2cm, near start] {$\D_4$}
    (8)--(9)--(10);
    \draw[very thick, dashed, blue]  (11)--(12) node[below right =  0.2cm, near start] {$\D_5$}
    (12)--(13)--(14)--(15);
    
    \draw [very thick, red]  (2)--(3) node[below left = 0.2cm, near start] {$\A_2$};
    \draw[very thick, red]  (4)--(5) node[below left = 0.2cm, near start] {$\A_3$}
    (5)--(6);
    \draw[very thick, red]  (16)--(17) node[below left = 0.2cm, near start] {$\A_6$}
    (17)--(18)--(19)--(20)--(21);
    \draw[very thick, red] (22)--(23) node[below left = 0.2cm, near start] {$\A_7$}
    (23)--(24)--(25)--(26)--(27)--(28);
    \draw[very thick, red] (29)--(30) node[below left = 0.2cm, near start] {$\A_8$}
    (30)--(31)--(32)--(33)--(34)--(35)--(36);
    
    \end{tikzpicture}
    \caption{The Hasse diagram of a word poset $P\in\PGC(n)$ such that
      $\ind_\delta(P)=(0,\dots,0)$ for $\delta=(\A,\A,\D,\D,\A,\A,\A)$. In
      this case, $\delta_{n-1}=\delta_{n-2}=\dots=\delta_{n-i}=\A$ and
      $\delta_{n-i}=\D$, where $n=8$ and $i=3$. For $r=2,3,\dots,n$, the set
      $P_r\setminus P_{r-1}$ forms is an ascending chain $\A_r$ or a
      descending chain $\D_r$. The word poset $P=P_n$ is
      decomposed into two parts $P_{n-i}$ and $P_n\setminus P_{n-i}\sim
      Q_{(n,n-1,\dots,n-i+1)}$.}
    \label{fig:syt}
  \end{figure}

By \eqref{eq_g_lambda}, we have
\[
\begin{split}
&g^{(2,1)} = 1, \quad g^{(2)} = 1, \\
&g^{(3,2,1)} = 2, \quad g^{(3,2)} = 2, \quad g^{(3)} = 1, \\
&g^{(4,3,2,1)} = 12, \quad g^{(4,3,2)} = 12, \quad  g^{(4,3)} = 5,\quad  g^{(4)} = 1.
\end{split}
\]
Applying Theorem~\ref{thm_SYT}, we can compute $\gc(n)$ for $n=2,3,4$ as follows. 
\[
\begin{split}
\gc(2) &=g^{(2,1)} \gc(0) + g^{(2)} \gc(1) = 1+1 = 2, \\
\gc(3) &= g^{(3,2,1)}\gc(0) + g^{(3,2)}\gc(1) + g^{(3)}\gc(2) = 2 +2 +2 = 6,\\
\gc(4) &= g^{(4,3,2,1)}\gc(0) + g^{(4,3,2)}\gc(1) + g^{(4,3)}\gc(2) + g^{(4)}\gc(3) 
= 12 + 12 + 10 + 6 = 40.
\end{split}
\]

We present the first few terms of $\gc(n)$ in Table~\ref{table_gcn}.
\begin{table}[h]
	\begin{tabular}{c|ccccccccc}
		\toprule
		$n$&$0$&$1$&$2$&$3$&$4$&$5$&$6$&$7$ & $8$ \\
		\midrule
		$\gc(n)$& $1$ & $1$ & $2$ & $6$ & $40$ & $916$ & $102176$ &  $68464624$ & $317175051664$ \\
		\bottomrule
	\end{tabular}		
	\caption{The first few terms of $\gc(n)$.}\label{table_gcn}
\end{table}

We close  this section by presenting the
following corollary of Theorem~\ref{thm_SYT}.
\begin{corollary}\label{cor_main2}
  Let $\lambda$ be a regular dominant weight of $\SL_{n+1}(\C)$. The number of reduced words $\mathbf i \in
  \RR(w_0^{(n+1)})$ such that the string polytope $\Delta_{\mathbf i}(\lambda)$
  is unimodularly equivalent to the Gelfand--Cetlin polytope $\GC(\lambda)$ is
  the same as $\gc(n)$.
\end{corollary}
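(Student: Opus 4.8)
The plan is to prove that
\[
U:=\{\mathbf i\in\Rn{n+1}:\ \Delta_{\mathbf i}(\lambda)\text{ is unimodularly equivalent to }\GC(\lambda)\}
\]
coincides with $\RGC(n)$; once this is done, Theorem~\ref{thm_SYT} gives $|U|=|\RGC(n)|=\gc(n)$ and the corollary follows. The argument is a short sandwich resting on \cite[Theorem~A]{CKLP}, which I will use in two guises: (A) if $\Delta_{\mathbf i}(\lambda)$ is combinatorially equivalent to a full dimensional Gelfand--Cetlin polytope of rank $n$, then $\ind_\delta(\mathbf i)=(0,\dots,0)$ for some $\delta\in\{\A,\D\}^{n-1}$; and (B) conversely, if $\ind_\delta(\mathbf i)=(0,\dots,0)$ for some such $\delta$, then in fact $\Delta_{\mathbf i}(\lambda)$ is unimodularly equivalent to $\GC(\lambda)$.

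For $U\subseteq\RGC(n)$: if $\mathbf i\in U$ then $\Delta_{\mathbf i}(\lambda)$ is unimodularly, hence combinatorially, equivalent to $\GC(\lambda)$; and since $\lambda$ is regular, $\GC(\lambda)$ is a full dimensional Gelfand--Cetlin polytope of rank $n$. So $\Delta_{\mathbf i}(\lambda)$ is combinatorially equivalent to a full dimensional Gelfand--Cetlin polytope of rank $n$, and (A) provides $\delta\in\{\A,\D\}^{n-1}$ with $\ind_\delta(\mathbf i)=(0,\dots,0)$. By the definitions of Section~\ref{secGelfandCetlinTypeCommutationClasses} this says $\Pi\in\PGC(n)$, i.e.\ $\mathbf i\in\RGC(n)$. (Read in the opposite direction, this inclusion also establishes the combinatorial characterization of Gelfand--Cetlin type reduced words stated in the remark preceding Proposition~\ref{prop:RGC=PGC}.)

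For $\RGC(n)\subseteq U$: take $\mathbf i\in\RGC(n)$, so $\ind_\delta(\mathbf i)=(0,\dots,0)$ for some $\delta\in\{\A,\D\}^{n-1}$. Then (B) gives an integral matrix of determinant $\pm1$ and an integral translation carrying $\Delta_{\mathbf i}(\lambda)$ onto $\GC(\lambda)$, so $\mathbf i\in U$. (If one wants a route independent of the precise statement of \cite[Theorem~A]{CKLP}, the needed unimodular map can be read off from the facet description of $\Delta_{\mathbf i}(\lambda)$ together with the inductive, $\delta$-determined shape of the Gelfand--Cetlin type word poset $\Pi$ furnished by Lemma~\ref{lem:con}.) Combining the two inclusions gives $U=\RGC(n)$, whence $|U|=\gc(n)$ by Theorem~\ref{thm_SYT}.

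I expect the main obstacle to be fact (B) --- upgrading ``$\Delta_{\mathbf i}(\lambda)$ is combinatorially a Gelfand--Cetlin polytope'' to ``$\Delta_{\mathbf i}(\lambda)$ is unimodularly equivalent to $\GC(\lambda)$'' --- which is exactly the content, or the proof, of \cite[Theorem~A]{CKLP}; the rest of the argument is bookkeeping on top of Theorem~\ref{thm_SYT}.
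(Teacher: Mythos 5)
Your proof is correct and follows essentially the same route as the paper: both reduce the corollary to \cite[Theorem~A]{CKLP} together with Theorem~\ref{thm_SYT}, identifying the set of reduced words whose string polytope is unimodularly equivalent to $\GC(\lambda)$ with $\RGC(n)$ by a sandwich of implications all supplied by the cited theorem. The only cosmetic difference is that the paper pivots through the facet-count characterization of \cite[Theorem~A]{CKLP} (unimodular equivalence $\Leftrightarrow$ exactly $n(n+1)$ facets, combined with the fact that any full dimensional Gelfand--Cetlin polytope of rank $n$ has $n(n+1)$ facets), whereas you pivot through its $\delta$-index characterization; both are faces of the same cited equivalence.
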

\begin{proof}
	We first  recall the known result from~\cite[Theorem~A]{CKLP} that for
  $\mathbf i \in \RR(w_0^{(n+1)})$, the string polytope $\Delta_{\mathbf
    i}(\lambda)$ is unimodularly equivalent to the Gelfand--Cetlin polytope
  $\GC(\lambda)$ if and only if the string polytope $\Delta_{\mathbf
    i}(\lambda)$ has exactly $n(n+1)$ facets. Here,
  facets are codimension one faces. We note that the number of facets of any
  full dimensional Gelfand--Cetlin polytope of rank $n$ is $n(n+1)$ (cf.~\cite{ACK18}).
  Accordingly, if the string polytope $\Delta_{\mathbf i}(\lambda)$ is combinatorially
  equivalent to a full dimensional Gelfand--Cetlin polytope of rank
    $n$, then it is also unimodularly equivalent to $\GC(\lambda)$ because it
  has $n(n+1)$ facets. This proves the corollary.
\end{proof}

\section{Word posets are determined by $\delta$-indices}
\label{secInjectivityOfTheTotalIndexMap}

In this section, we prove that the $\delta$-indices of $P$ for all $\ad \in
\{\A,\D\}^{n-1}$ completely determine $P\in \P(w_0^{(n+1)})$ up to isomorphism.
Equivalently, the $\delta$-indices of $\mathbf{i}\in\Rn{n+1}$ for all $\ad \in
\{\A,\D\}^{n-1}$ determine the commutation class $[\mathbf{i}]\in
[\RR(w_0^{(n+1)})]$.

\begin{theorem}\label{thm_main_injection}
  Let $P,Q \in \P(w_0^{(n+1)})$. If $\ind_\ad(P) = \ind_\ad(Q)$ for all $\ad
  \in \{\A,\D\}^{n-1}$, then $P\sim Q$.
\end{theorem}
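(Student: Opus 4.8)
The plan is to argue by induction on $n$, reducing everything to a single statement about the two elementary contractions $C_\A$ and $C_\D$. The base cases are immediate: $\P(w_0^{(2)})$ has one isomorphism class, and the two classes in $[\P(w_0^{(3)})]$ are already told apart by the pair $(\ind_\A,\ind_\D)$.

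For the inductive step, assume the theorem for word posets in $\P(w_0^{(n)})$, and let $P,Q\in\P(w_0^{(n+1)})$ have $\ind_\ad(P)=\ind_\ad(Q)$ for all $\ad\in\{\A,\D\}^{n-1}$. Unwinding the definition of the $\ad$-index, for $\eta\in\{\A,\D\}^{n-2}$ and $\ad=(\eta,\A)$ one has
\[
\ind_{(\eta,\A)}(P)=\bigl(\ind_{\eta}(C_\A(P)),\ \ind_\A(P)\bigr),
\]
where $\ind_\eta(C_\A(P))\in\Z^{n-2}$ is the $\eta$-index of $C_\A(P)\in\P(w_0^{(n)})$, and similarly with $\D$ in place of $\A$. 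Hence $C_\A(P)$ and $C_\A(Q)$ have the same $\eta$-index for every $\eta$, and likewise $C_\D(P)$ and $C_\D(Q)$; reading off the last coordinates gives $\ind_\A(P)=\ind_\A(Q)$ and $\ind_\D(P)=\ind_\D(Q)$. By the induction hypothesis, $C_\A(P)\sim C_\A(Q)$ and $C_\D(P)\sim C_\D(Q)$. So it suffices to prove the one-step statement: \emph{a word poset $P\in\P(w_0^{(n+1)})$ is determined up to isomorphism by the isomorphism types of $C_\A(P)$ and $C_\D(P)$ together with the integers $\ind_\A(P)$ and $\ind_\D(P)$.}

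To prove this, use Proposition~\ref{prop:EC}: $P\sim E_\A(C_\A(P),I_\A(P))$, so $P$ is recovered from $C_\A(P)$ and the $\A$-contraction ideal $I_\A(P)$. By Lemma~\ref{lem:chain} each column of $C_\A(P)$ is a chain, and $I_\A(P)$, being an order ideal, meets each column in an initial segment; thus $I_\A(P)$ is encoded by its column-height vector $\mathbf h=(h_1,\dots,h_{n-1})$, and $\sum_c h_c=\binom{n}{2}-\ind_\A(P)$ is already fixed. It remains to show that $\mathbf h$ is forced. Since $P\sim E_\A(C_\A(P),I_\A(P))$ and (as one checks from the wiring-diagram description) $C_\D$ and $C_\A$ commute, both the isomorphism type of $C_\D(P)$ and the number $\ind_\D(P)$ are explicit functions of $C_\A(P)$ and $\mathbf h$; one then argues that, for fixed $C_\A(P)$, the map $\mathbf h\mapsto(\text{iso.\ type of }C_\D(P),\ \ind_\D(P))$ is injective. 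Granting this, $\mathbf h(P)=\mathbf h(Q)$, so a column-preserving isomorphism $C_\A(P)\to C_\A(Q)$ carries $I_\A(P)$ onto $I_\A(Q)$ and extends to an isomorphism $E_\A(C_\A(P),I_\A(P))\to E_\A(C_\A(Q),I_\A(Q))$, whence $P\sim Q$.

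The main obstacle is this last injectivity claim. One has to pin down which elements of $C_\A(P)$ end up below, and which above, the ascending chain inserted by $E_\A$, and then track how the descending chain of the glued poset threads through it, showing in particular that raising a single height $h_c$ cannot be offset, i.e.\ that it strictly changes $\ind_\D(P)$ or the isomorphism type of $C_\D(P)$. I expect this to require a case analysis on the local pictures around the inserted chain, in the spirit of (but more delicate than) Lemma~\ref{lem:con}, or equivalently a direct argument on wiring diagrams following the first and the last wires simultaneously. The remaining ingredients — the induction, the restriction of $\ad$-indices to contractions, and the reduction to $\mathbf h$ — are routine given the machinery of Sections~\ref{secContractionsExtensionsAndIndices} and~\ref{secGelfandCetlinTypeCommutationClasses}.
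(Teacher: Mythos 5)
Your setup coincides with the paper's: the induction on $n$, the observation that the hypothesis descends to $C_\A$ and $C_\D$ (so that $C_\A(P)\sim C_\A(Q)$ and $C_\D(P)\sim C_\D(Q)$ by the inductive hypothesis), the use of Proposition~\ref{prop:EC} to reduce to recovering the contraction ideal, and the reduction of that to column counts via Lemma~\ref{lem:chain}/Lemma~\ref{lemma_order_ideal_n_of_elements} are all exactly the paper's steps. But the decisive step --- your ``injectivity claim'' that, for fixed $C_\A(P)$, the height vector $\mathbf h$ of $I_\A(P)$ is determined by the isomorphism type of $C_\D(P)$ and $\ind_\D(P)$ --- is precisely where the entire content of the theorem lives, and you leave it unproved, offering only the expectation of a case analysis. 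That is a genuine gap, not a routine verification: it is not clear that ``raising a single height $h_c$ cannot be offset'' admits a local argument around the inserted chain, since altering $\mathbf h$ changes the global poset and hence where the descending chain of the extended poset threads through it, and your sketch gives no handle on that.

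The paper closes this gap without any case analysis, by a counting argument you do not have. The key input is Lemma~\ref{lemma_An_An-1_in_PD}: $\A(P)\cap C_\D(P)=\A(C_\D(P))$ and $\D(P)\cap C_\A(P)=\D(C_\A(P))$, i.e.\ the two distinguished chains restrict compatibly under the opposite contraction. Writing $a_i,b_i,c_i$ for the column-by-column sizes of $I_\D(P)\setminus I_\A(P)$, $I_\D(P)\cap I_\A(P)$, and $I_\A(P)\setminus I_\D(P)$, one then expresses the column counts of $I_\D(C_\A(P))$ and of $I_\A(C_\D(P))$ --- which are invariants of the isomorphism types of $C_\A(P)$ and $C_\D(P)$ alone --- as explicit expressions in $a_i,b_i,c_i$ (equations \eqref{eq:RA} and \eqref{eq:RD} in the paper). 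Comparing these for $P$ and $Q$ recovers $b_i$ for all $i$ and $a_i$ for $i<s$ (where $s$ is the column of $\D(P)\cap\A(P)$), hence the column counts of $I_\D(P)$, and Lemma~\ref{lemma_order_ideal_n_of_elements} finishes the proof. Note also that this shows the isomorphism types of $C_\A(P)$ and $C_\D(P)$ alone suffice, without separately invoking $\ind_\A(P)$ and $\ind_\D(P)$ as you do. To complete your argument you would need to either carry out and verify the case analysis you defer, or adopt something equivalent to this chain-compatibility computation; your incidental claim that $C_\A$ and $C_\D$ commute is likewise asserted without proof, though it is not load-bearing.
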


In order to prove Theorem~\ref{thm_main_injection}, we need the following two
lemmas.

\begin{lemma}\label{lemma_An_An-1_in_PD}
  Let $P\in \P(w_0^{(n+1)})$. Then
  \begin{align*}
\A(P)\cap C_\D(P) &= \A(C_\D(P)),\\
\D(P)\cap C_\A(P) &= \D(C_\A(P)).
  \end{align*}
  In other words, the ascending chain of $P$ restricted to $C_\D(P)$ is the
  ascending chain of $C_\D(P)$, and similarly, the descending chain of $P$
  restricted $C_\A(P)$ is the descending chain of $C_\A(P)$.
\end{lemma}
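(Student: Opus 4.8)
The plan is to prove the first identity, $\A(P)\cap C_\D(P)=\A(C_\D(P))$; the second one, $\D(P)\cap C_\A(P)=\D(C_\A(P))$, then follows by the same argument with the roles of $\A$ and $\D$ interchanged, since the $\A$- and $\D$-contractions are defined symmetrically. First I would set up notation using Proposition~\ref{prop_index}: write $\A(P)=\{a_1<_P\cdots<_P a_n\}$ with $f_P(a_i)=i$, and $\D(P)=\{d_1<_P\cdots<_P d_n\}$ with $f_P(d_j)=n+1-j$. Since $|\A(P)\cap\D(P)|=1$ there is a unique $p\in[n]$ with $a_p\in\D(P)$, and comparing the values of $f_P$ forces $a_p=d_{n+1-p}$. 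Hence $\A(P)\cap C_\D(P)=\{a_1,\dots,a_{p-1},a_{p+1},\dots,a_n\}$, a set of $n-1$ elements, which matches $|\A(C_\D(P))|=n-1$ (recall that $C_\D(P)\in\P(w_0^{(n)})$).

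The core of the proof is to show that $\{a_1,\dots,a_{p-1},a_{p+1},\dots,a_n\}$ is a chain in $C_\D(P)$ whose $f_{C_\D(P)}$-values are exactly $1,2,\dots,n-1$; then the uniqueness statement in Proposition~\ref{prop_index}, applied to $C_\D(P)$, identifies this chain with $\A(C_\D(P))$. That it is a chain in $C_\D(P)$ is immediate, because it is a subset of the chain $\A(P)$ and the partial order of $C_\D(P)$ is induced from that of $P$. For the $f$-values I would decide, for each $i\neq p$, whether $a_i$ lies in the $\D$-contraction ideal $I_\D(P)$. By the definition of $I_\D(P)$, since each column contains exactly one element of $\D(P)$, one has $a_i\in I_\D(P)$ if and only if $a_i<_P d_{n+1-i}$ (the unique descending-chain element in column $i$). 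For $i<p$, combining $a_i<_P a_p=d_{n+1-p}$ with $d_{n+1-p}<_P d_{n+1-i}$ (valid because $n+1-p<n+1-i$) gives $a_i<_P d_{n+1-i}$, so $a_i\in I_\D(P)$ and $f_{C_\D(P)}(a_i)=f_P(a_i)=i$. For $i>p$, combining $a_p<_P a_i$ with $d_{n+1-i}<_P d_{n+1-p}=a_p$ gives $d_{n+1-i}<_P a_i$, so $a_i\notin I_\D(P)$ (by antisymmetry of $\le_P$) and $f_{C_\D(P)}(a_i)=f_P(a_i)-1=i-1$. Therefore $a_1,\dots,a_{p-1}$ carry the $f_{C_\D(P)}$-values $1,\dots,p-1$ and $a_{p+1},\dots,a_n$ carry $p,\dots,n-1$, so together they realize $\{1,\dots,n-1\}$ exactly, as desired.

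I do not expect a genuine obstacle here; the only thing that needs care is the index bookkeeping around the crossing point $p$ — in particular the degenerate cases $p=1$ and $p=n$, where one of the two index ranges is empty — and checking that every comparability relation invoked ($a_i<_P a_p$, $d_{n+1-p}<_P d_{n+1-i}$, and so on) is really a consequence of the chain structure of $\A(P)$ and $\D(P)$ guaranteed by Proposition~\ref{prop_index} together with the antisymmetry of $\le_P$. Once this is in place, applying Proposition~\ref{prop_index} to $C_\D(P)$ (and, symmetrically, to $C_\A(P)$) finishes both statements of the lemma.
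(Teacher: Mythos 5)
Your proposal is correct and follows essentially the same route as the paper's proof: both identify the unique crossing element of $\A(P)\cap\D(P)$, observe that the ascending-chain elements before the crossing lie in the contraction ideal $I_\D(P)$ while those after it do not, compute the resulting $f_{C_\D(P)}$-values $1,\dots,n-1$, and conclude via the uniqueness statement of Proposition~\ref{prop_index} applied to $C_\D(P)\in\P(w_0^{(n)})$. Your explicit transitivity argument through the crossing point (deriving $a_i<_P d_{n+1-i}$ for $i<p$ and its negation for $i>p$) is a sound, more detailed justification of the step the paper only asserts with reference to a figure.
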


\begin{proof}
By definition we can write
  \begin{align*}
\D(P) &= \{d_1<_P d_{2}<_P \dots <_P d_n\},\\
\A(P) &= \{a_1<_P a_{2}<_P \dots <_P a_n\},
  \end{align*}
  where $f_P(a_i)=i$ and $f_P(d_i)=n+1-i$ for $1\le i\le n$. By
  Proposition~\ref{prop_index}, $\D(P)\cap\A(P)$ has a unique element, say
  $a_k$. By definition, $C_\D(P)=P\setminus \D$ and
\begin{equation}\label{eq:a_k}
 \{a_1<_{C_\D(P)} \dots <_{C_\D(P)} a_{k-1}<_{C_\D(P)} a_{k+1} <_{C_\D(P)} \dots <_{C_\D(P)} a_n\}.
\end{equation}
Moreover, since $a_{1},\dots,a_{k-1}$ are below the descending chain $\D(P)$ and
$a_{k+1},\dots,a_n$ are above $\D(P)$ in the Hasse diagram of $P$, we have
$f_{C_\D(P)}(a_i)=i$ for $1\le i\le k-1$ and $f_{C_\D(P)}(a_i)=i-1$ for $k+1\le
i\le n$, see Figure~\ref{fig_An_An-1_PD}. Therefore \eqref{eq:a_k} is the
ascending chain of $C_\D(P)$, which shows the first identity. The second
identity can be proved similarly.
\end{proof}

	\begin{figure}
	\begin{tikzpicture}[auto,node distance=0.1cm and 0.5cm, inner sep=2pt]
	\tikzstyle{state}=[draw, circle]
	
	\node (1) {$a_1$};
	\node (2) [above right =of 1]  {\rotatebox{26}{$\dots$}};
	\node (k1) [above right = of 2] {$a_{k-1}$};
	\node[state, fill=blue!20] (k) [above right = of k1] {$a_k$};
	\node[fill=red!10] (k2) [above right = of k] {$a_{k+1}$};
	\node[fill=red!10] (d) [above right = of k2] {\rotatebox{26}{$\dots$}};
	\node [fill=red!10] (n) [above right = of d] {$a_n$};
	
	\node (k11) [above left = of k] {};
	\node (k21) [below right = of k] {};
	
	\draw[very thick, red] (1)--(2)--(k1)--(k)--(k2)--(d)--(n) ;

	\node [red, right = of n] {$\A(P)$};
	
	\draw (k11)--(k)--(k21);
	\draw[blue, very thick, dashed] (k11)--(k)--(k21) node[right] {$\D(P)$};
	\end{tikzpicture} \hspace{1cm}%
		\begin{tikzpicture}[auto,node distance=0.1cm and 0.5cm, inner sep=2pt]
	\tikzstyle{state}=[draw, circle]
	
	\node (1) {$a_1$};
	\node (2) [above right =of 1]  {\rotatebox{26}{$\dots$}};
	\node (k1) [above right = of 2] {$a_{k-1}$};
	\node[fill=red!10] (k) [above right = of k1] {$a_{k+1}$};
	\node[fill=red!10] (d) [above right = of k] {\rotatebox{26}{$\dots$}};
	\node[fill=red!10] (n) [above right = of d] {$a_{n}$};
	\node [red, right = of n] {$\A(C_\D(P))$};
	
	\draw[very thick, red] (1)--(2)--(k1)--(k)--(d)--(n);
	
	\end{tikzpicture}
	\caption{The ascending chain $\A(P)$ induces the ascending chain $A(C_\D(P))$.}
	\label{fig_An_An-1_PD}
	\end{figure}
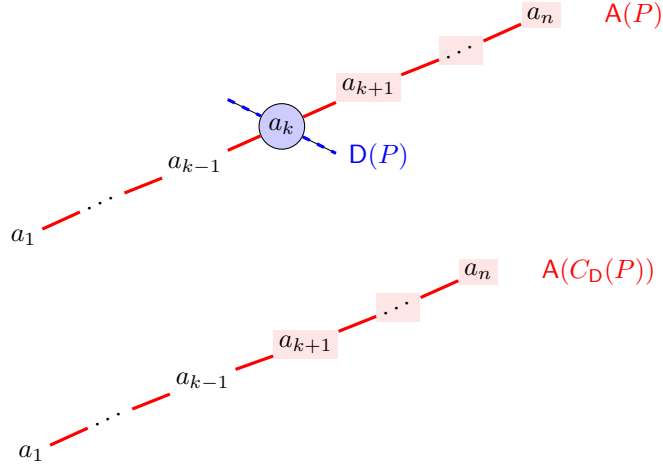

  The following lemma shows that an ideal of a word poset $P\in\P(w_0^{(n+1)})$
  is determined by the number of elements in each column.

\begin{lemma}\label{lemma_order_ideal_n_of_elements}
	Let $P\in\P(w_0^{(n+1)})$. Suppose that $I$ and $J$ are ideals of $P$ such that
\[
|\{ x\in I: f_P(x) = i\}|  = |\{ x\in J: f_P(x) = i\}|,
\]
for all $1\le i\le n$. Then we have $I = J$.
\end{lemma}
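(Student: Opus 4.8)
The plan is to reduce the statement to a column-by-column comparison using Lemma~\ref{lem:chain}. First I would recall that, since $P\in\P(w_0^{(n+1)})$, Lemma~\ref{lem:chain} tells us that for each $i\in[n]$ the set $C_i:=\{x\in P: f_P(x)=i\}$ is a chain in $P$; write its elements in increasing order as $x_{i,1}<_P x_{i,2}<_P\dots<_P x_{i,\ell_i}$ with $\ell_i=|C_i|$. Since $f_P$ takes values in $[n]$, the sets $C_1,\dots,C_n$ partition $P$.

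The key observation is that the trace of any ideal of $P$ on the chain $C_i$ is an initial segment of that chain, and hence is determined by its cardinality alone. Concretely, if $I$ is an ideal and $x_{i,k}\in I$, then every $x_{i,j}$ with $j\le k$ satisfies $x_{i,j}<_P x_{i,k}$, so $x_{i,j}\in I$; thus $I\cap C_i=\{x_{i,1},\dots,x_{i,k_i}\}$ where $k_i:=|I\cap C_i|$. The same reasoning applied to $J$ gives $J\cap C_i=\{x_{i,1},\dots,x_{i,m_i}\}$ with $m_i:=|J\cap C_i|$.

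To finish, I would invoke the hypothesis: $k_i=|\{x\in I: f_P(x)=i\}|=|\{x\in J: f_P(x)=i\}|=m_i$ for every $i\in[n]$, whence $I\cap C_i=J\cap C_i$ for all $i$. Taking the union over $i$ and using that the $C_i$ partition $P$ yields $I=\bigsqcup_i (I\cap C_i)=\bigsqcup_i (J\cap C_i)=J$, as desired.

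I do not anticipate any real obstacle: the argument is elementary once Lemma~\ref{lem:chain} is in hand. The only place where the assumption $P\in\P(w_0^{(n+1)})$ is genuinely used is precisely there — it forces each column of $P$ to be totally ordered, so that an ideal is pinned down on that column by a single number. For a general word poset a column could contain incomparable elements, and then the number of elements of an ideal in that column would not determine which ones they are, so the conclusion would fail.
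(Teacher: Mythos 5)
Your proof is correct and follows essentially the same route as the paper's: both invoke Lemma~\ref{lem:chain} to see that each column $\{x\in P: f_P(x)=i\}$ is a chain, observe that an ideal meets a chain in an initial segment determined by its cardinality, and conclude $I\cap C_i=J\cap C_i$ for every $i$, hence $I=J$ by taking the union over the columns. No gaps.
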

\begin{proof}
  Consider the ideals $I$ and $J$ in the statement of this lemma. Observe that $I$ is the
  disjoint union of $\{ x\in I: f_P(x) = i\}$ for $1\le i\le n$. Thus it
  suffices to show that 
  \begin{equation}
    \label{eq:1}
\{ x\in I: f_P(x) = i\}=\{ x\in J: f_P(x) = i\}.
  \end{equation}

Fix $1\le i\le n$, and let 
\[
r = |\{ x\in I: f_P(x) = i\}|  = |\{ x\in J: f_P(x) = i\}|.
\]
	By Lemma~\ref{lem:chain}, for each $i \in [n]$, $\{ x\in P: f_P(x) = i\}$ is a chain. 
Accordingly, we can write
\[
C:=\{x\in P: f_P(x) = i\} = \{c_1<_P c_2<_P\dots<_{P} c_t\}.
\]
Since $I$ is an ideal, $I\cap C$ is also an ideal of $C$.
Because $C$ is a chain this means 
\[
I\cap C = \{c_1<_P c_2<_P\dots<_{P} c_r\}.
\]
By the same argument we also have
\[
J\cap C = \{c_1<_P c_2<_P\dots<_{P} c_r\}.
\]
Therefore $I\cap C = J\cap C$, which is \eqref{eq:1}. This completes the proof.
\end{proof}

We are now ready to prove Theorem~\ref{thm_main_injection}.

\begin{proof}[Proof of Theorem~\ref{thm_main_injection}]
  
  Let $P,Q \in \P(w_0^{(n+1)})$ such that $\ind_\ad(P) = \ind_\ad(Q)$ for all
  $\ad \in \{\A,\D\}^{n-1}$. We will prove $P\sim Q$ by induction on $n$.

  Since $P,Q \in \P(w_0^{(n+1)})$, there are reduced words
  $\mathbf{i},\mathbf{j}\in \Rn{n+1}$ with $P\sim \Pi$ and $Q\sim \Pj$. If $n =
  1$, then $\Rn{n+1}$ has only one element $(1)$. Thus $\mathbf{i}=\mathbf{j}$,
  and $P\sim\Pi=\Pj\sim Q$. If $n = 2$, there are two reduced words $(1,2,1)$
  and $(2,1,2)$ in $\Rn{n+1}$. Since $\ind_{\A}(1,2,1) = 1$ and
  $\ind_{\A}(2,1,2) = 0$, if $\ind_\ad(\Pi) = \ind_\ad(\Pj)$ for all
  $\ad \in \{\A,\D\}^{n-1}$, we must have $\mathbf{i}=\mathbf{j}$. Therefore we also
  have $P\sim\Pi=\Pj\sim Q$.

	Now let $n > 2$ and suppose that the statement holds for $n-1$. Since
  $\ind_\ad(P) = \ind_\ad(Q)$ for all $\ad \in \{\A,\D\}^{n-1}$, by the
  definition of $\delta$-indices, we have
  \begin{align*}
	\ind_{\delta}(C_\D(P)) &= \ind_{\delta}(C_\D(Q)) \quad \text{ for all }\ad \in \{\A,\D\}^{n-2},\\
	\ind_{\delta}(C_\A(P)) &= \ind_{\delta}(C_\A(Q)) \quad \text{ for all }\ad \in \{\A,\D\}^{n-2}.
  \end{align*}
  Thus, by the induction hypothesis, we have $C_\D(P)\sim C_\D(Q)$ and $C_\A(P)\sim C_\A(Q)$.

  By Proposition~\ref{prop:EC}, 
  \begin{align*}
P &\sim E_\D(C_\D(P), I_\D(P)),\\
Q &\sim E_\D(C_\D(Q), I_\D(Q)).
  \end{align*}
  Since $C_\D(P)\sim C_\D(Q)$, in order to show $P\sim Q$, it suffices to show
  that the word poset isomorphism $C_\D(P)\sim C_\D(Q)$ induces $I_\D(P)\sim
I_\D(Q)$. By Lemma~\ref{lemma_order_ideal_n_of_elements}, in order to show
$I_\D(P)\sim I_\D(Q)$, it suffices to show the following claim: for all $i\in
[n]$,
\[
|\{ x\in I_\D(P): f_{C_\D(P)}(x)=i\}|=|\{ x\in I_\D(Q): f_{C_\D(Q)}(x)=i\}|.
\]
Since $f_{R}(x)=f_{C_\D(R)}(x)$ for all $x\in I_\D(R)$, where $R$ is $P$ or $Q$,
the claim can be rewritten as
\begin{equation}
  \label{eq:claim}
|\{ x\in I_\D(P): f_{P}(x)=i\}|=|\{ x\in I_\D(Q): f_{Q}(x)=i\}|.
\end{equation}

Let $R$ be either $P$ or $Q$. By Proposition~\ref{prop_index}, $\D(R)\cap\A(R)$
has a unique element, say $z$. Suppose $f_{R}(z)=s$. For $i\in [n]$, define
\begin{align*}
  a_i(R) &= |\{x\in I_\D(R)\setminus I_\A(R): f_R(x)=i\}|,\\
  b_i(R) &= |\{x\in I_\D(R)\cap I_\A(R): f_R(x)=i\}|,\\
  c_i(R) &= |\{x\in I_\A(R)\setminus I_\D(R): f_R(x)=i\}|.
\end{align*}
See Figure~\ref{fig:abc}.

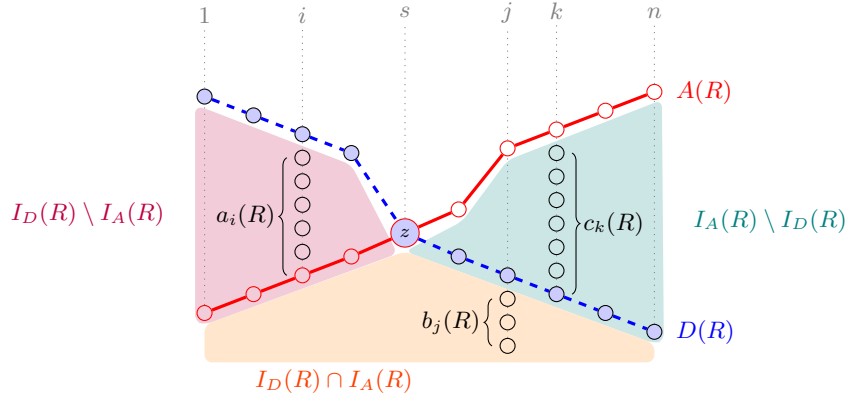
\begin{figure}
  \centering
  	\begin{tikzpicture}[auto,node distance=0.1cm and 0.5cm, inner sep=2pt]
  \tikzstyle{A}=[draw, circle,  draw=red]
  \tikzstyle{D}=[draw, circle,  fill=blue!20]
  \tikzstyle{n} = [draw, circle]
  
  \node[n] (1) {}; 
  \node[n] (2) [above = of 1] {};
  \node[n] (3) [above = of 2] {};
  \node[D] (4) [above = of 3] {};
  \node[D] (5) [below right = of 4] {};
  \node[D] (6) [below right = of 5] {};
  \node[D] (7) [below right = of 6] {};
  \node[D] (8) [above left = of 4] {};
  \node[D, draw=red] (9) [above left = of 8] {\scriptsize $z$};
  \node[A] (10) [below left = of 9] {};
  \node[A] (11) [below left = of 10] {};
  \node[A] (12) [below left = of 11] {};
  \node[A] (13) [below left = of 12] {};
  \node[n] (14) [above = of 11] {}; 
  \node[n] (15) [above = of 14] {};
  \node[n] (16) [above = of 15] {};
  \node[n] (17) [above = of 16] {};
  \node[n] (18) [above = of 17] {};
  \node[D] (19) [above = of 18] {};
  \node[D] (20) [below right = of 19]{};
  \node[D] (21) [above left = of 19] {};
  \node[D] (22) [above left = of 21] {};
  \node[n] (23) [above = of 5] {};
  \node[n] (24) [above = of 23] {};
  \node[n] (25) [above = of 24] {};
  \node[n] (26) [above = of 25] {};
  \node[n] (27) [above = of 26] {};
  \node[n] (28) [above = of 27]{} ;
  \node[A] (29) [above = of 28] {};
  \node[A] (30) [below left = of 29] {};
  \node[A] (31) [above right = of 9] {};
  \node[A] (32) [above right = of 29] {};
  \node[A] (33) [above right = of 32] {};

\begin{scope}[on background layer]    
	\draw[rounded corners = 1mm, fill = teal!20!white, very thick, draw=none]
($(33.south)+(0.1,0)$)--($(30.south)+(0,-0.05)$)--($(31.south)+(0.05,-0.05)$)--($(9.south)+(0,-0.05)$)--($(7.south)+(0,-0.05)$)--($(7.south east)+(0.05,-0.05)$) --cycle node[midway, xshift = 1in] {\small \textcolor{teal}{$I_A(R) \setminus I_D(R)$}};

	\draw[rounded corners = 1mm, fill=orange!20!white, very thick, draw = none]
($(7.south)+(0,-0.3)$)--($(7.south)+(0,-0.05)$)--($(9.south)+(0,-0.05)$)--($(13.south)+(0,-0.05)$)--($(13.south)+(0,-0.55)$)--cycle node[midway, xshift=-0.5in, yshift=-0.5cm] {\small \textcolor{orange!50!red}{$I_D(R) \cap I_A(R)$}};

	\draw[rounded corners = 1mm, fill=purple!20!white, very thick, draw=none]
($(13.south west)+(-0.05,-0.05)$)--($(13.south)+(0,-0.05)$)--($(9.south)+(-0.1,-0.08)$)--($(20.south)+(0,-0.05)$)--($(22.south)+(0,-0.05)$)--($(22.south west)+(-0.05,-0.05)$)--cycle node[midway, xshift=-1in] {\small \textcolor{purple}{$I_D(R) \setminus I_A(R)$}};
\end{scope}

\draw[very thick, red] (13)--(12)--(11)--(10)--(9)--(31)--(30)--(29)--(32)--(33) node [right = 0.2cm] {\small $A(R)$};
\draw[very thick, dashed, blue] (22)--(21)--(19)--(20)--(9)--(8)--(4)--(5)--(6)--(7) node[right=0.2cm] {\small $D(R)$};

\draw[decorate,decoration={brace,amplitude=3pt}] 
($(1.west) + (-0.1,0)$)--($(3.west)+(-0.1,0)$) node[midway, xshift=-0.03in] {\small $b_j(R)$};
\draw[decorate,decoration={brace,amplitude=3pt}] 
($(28.east) + (0.1,0)$)--($(5.east)+(0.1,0)$) node[midway, xshift = 0.03in] {\small $c_k(R)$};
\draw[decorate,decoration={brace,amplitude=3pt}] 
($(11.west)+(-0.1,0)$)--($(18.west)+(-0.1,0)$) node[midway, xshift=-0.03in] {\small $a_i(R)$};

\draw[dotted,gray] (13)--(22)--($(22)+(0,0.9)$) node[above] {\small{$1$}};
\draw[dotted, gray] (11)--(14)--(15)--(16)--(17)--(18)--(19)--($(19)+(0,1.4)$) node[above] {\small{$i$}};
\draw[dotted, gray] (9)--($(9)+(0,2.8)$) node[above] {\small{$s$}};
\draw[dotted, gray] (1)--(2)--(3)--(4)--(30)--($(30)+(0,1.6)$) node[above] {\small{$j$}};
\draw[dotted, gray] (5)--(23)--(24)--(25)--(26)--(27)--(28)--(29)--($(29)+(0,1.4)$) node[above] {\small{$k$}};
\draw[dotted, gray] (7)--(33)--($(33)+(0,0.9)$) node[above] {\small{$n$}};

  \end{tikzpicture}
  \caption{An illustration of $I_\D(R)\setminus
    I_\A(R)$, $I_\D(R)\cap I_\A(R)$, $I_\A(R)\setminus I_\D(R)$, $a_i(R)$,
    $b_k(R)$, and $c_j(R)$.}
  \label{fig:abc}
\end{figure}

By definition,
\begin{equation}\label{eq:R}
  |\{x\in I_\D(R): f_R(x)=i\}| =
  \begin{cases}
    a_i(R)+b_i(R) & \mbox{if $i<s$},\\
    b_i(R) & \mbox{if $i\geq s$}.
  \end{cases}
\end{equation}
By Lemma~\ref{lemma_An_An-1_in_PD}, $\D(R)\setminus\{z\}$ is the descending
chain of $C_\A(R)$, which is obtained from $R$ by removing $\A(R)$ and shifting
the elements below $\A(R)$ to the left by one column. This shows that
\begin{equation}\label{eq:RA}
|\{x\in I_\D(C_\A(R)): f_{C_\A(R)}(x)=i\}| = 
  \begin{cases}
    a_i(R)-1+b_{i+1}(R) & \mbox{if $i<s$},\\
    b_{i+1}(R) & \mbox{if $i\ge s$}.
  \end{cases}
\end{equation}
Similarly, we have
\begin{equation}\label{eq:RD}
|\{x\in I_\A(C_\D(R)): f_{C_\D(R)}(x)=i\}| = 
  \begin{cases}
    b_{i}(R) & \mbox{if $i<s$},\\
    c_{i+1}(R)-1+b_{i+1}(R) & \mbox{if $i\ge s$}.
  \end{cases}
\end{equation}

Since $C_\A(P)\sim C_\A(Q)$ (respectively, $C_\D(P)\sim C_\D(Q)$), the left hand
side of \eqref{eq:RA} (respectively, \eqref{eq:RD}) is the same for both cases
$R=P$ and $R=Q$. Comparing the right hand sides of \eqref{eq:RA} and
\eqref{eq:RD} for the cases $R=P$ and $R=Q$, we obtain $b_i(P)=b_i(Q)$ for all
$1\le i\le n$, $a_i(P)=a_i(Q)$ for all $1\le i\le s-1$, and $c_i(P)=c_i(Q)$ for
all $s+1\le i\le n$. By \eqref{eq:R}, this implies the claim \eqref{eq:claim}
and the proof is completed.
\end{proof}

	We note that 	B\'{e}dard~\cite{Bedard99} studied the combinatorics of commutation classes for Weyl groups of any Lie types by introducing a \textit{level function} on a certain subset of positive roots of the corresponding root system. Indeed, for each reduced word a level function is defined, and this function distinguishes commutation classes, i.e., $\mathbf i \sim \mathbf j$ if and only if the corresponding level functions are the same. 

\begin{Question}
  Recall that the indices have been defined for the reduced words of the longest
  element in $\mathfrak{S}_{n+1}$, which is the Weyl group of Lie type $A$. The
  level functions introduced by B\'{e}dard~\cite{Bedard99} and string polytopes
  are defined for any Lie type. In this regard, we may ask whether one can
  generalize the definitions of indices to other Lie types to provide more
  fruitful understanding of the combinatorics of string polytopes.
\end{Question}
\begin{remark}
  We have seen that the indices of reduced words are used to classify the string
  polytopes combinatorially equivalent to a Gelfand--Cetlin polytope. Recently, the
  combinatorics of string polytopes associated with reduced words of
  \textit{small indices} has been studied in~\cite{CKLP2}. A reduced word
  $\mathbf i \in \Rn{n+1}$ has small indices if $\ind_{\ad}(\mathbf i) =
  (0,\dots,0,k )$ for some $\ad \in \{\A,\D\}^{n-1}$ and $k \leq
  \kappa(\ad_{n-1},\ad_n)$. Here, $\kappa(\ad_{n-1},\ad_n) = 2$ if $\ad_n =
  \ad_{n-1}$; and $\kappa(\ad_{n-1},\ad_n) = n-1$ otherwise. In~\cite{CKLP2}, Cho
  et al. found the number of codimension one faces and the description of the
  vertices for the string polytopes associated with reduced words having small
  indices. These examples show that the notion of indices may have a potential
  role to study the combinatorics of string polytopes.
\end{remark}

%

\begin{thebibliography}{BCFKvS00}
	
	\bibitem[ACK18]{ACK18}
	Byung~Hee An, Yunhyung Cho, and Jang~Soo Kim, \emph{On the {$f$}-vectors of
		{G}elfand-{C}etlin polytopes}, European J. Combin. \textbf{67} (2018),
	61--77.
	
	\bibitem[BB05]{BB05Combinatorics}
	Anders Bj\"{o}rner and Francesco Brenti, \emph{Combinatorics of {C}oxeter
		groups}, Graduate Texts in Mathematics, vol. 231, Springer, New York, 2005.
	
	\bibitem[BCFKvS00]{BCKV}
	Victor~V. Batyrev, Ionu\c{t} Ciocan-Fontanine, Bumsig Kim, and Duco van
	Straten, \emph{Mirror symmetry and toric degenerations of partial flag
		manifolds}, Acta Math. \textbf{184} (2000), no.~1, 1--39.
	
	\bibitem[BZ01]{BeZe01}
	Arkady Berenstein and Andrei Zelevinsky, \emph{Tensor product multiplicities,
		canonical bases and totally positive varieties}, Invent. Math. \textbf{143}
	(2001), no.~1, 77--128.
	
	\bibitem[Bé99]{Bedard99}
	Robert Bédard, \emph{On commutation classes of reduced words in {W}eyl
		groups}, European J. Combin. \textbf{20} (1999), no.~6, 483--505.
	
	\bibitem[CKLP19a]{CKLP}
	Yunhyung Cho, Yoosik Kim, Eunjeong Lee, and Kyeong-Dong Park, \emph{On the
		combinatorics of string polytopes}, arXiv:1904.00130 (2019).
	
	\bibitem[CKLP19b]{CKLP2}
	\bysame, \emph{Small toric resolutions of toric varieties of string polytopes
		with small indices}, arXiv:1912.00658 (2019).
	
	\bibitem[DBKR19]{DB-Kim-Reiner}
	Galen Dorpalen-Barry, Jang~Soo Kim, and Victor Reiner, \emph{{W}hitney numbers
		for poset cones}, arXiv:1906.00036v2 (2019).
	
	\bibitem[DES16]{DES}
	Hugh Denoncourt, Dana~C. Ernst, and Dustin Story, \emph{On the number of
		commutation classes of the longest element in the symmetric group}, Open
	Problems in Mathematics \textbf{4} (2016).
	
	\bibitem[FMPT18]{FMPTE19}
	Susanna Fishel, Elizabeth Milićević, Rebecca Patrias, and Bridget~Eileen
	Tenner, \emph{Enumerations relating braid and commutation classes}, European
	J. Combin. \textbf{74} (2018), 11--26.
	
	\bibitem[FV11]{FV11}
	Stefan Felsner and Pavel Valtr, \emph{Coding and counting arrangements of
		pseudolines}, Discrete Comput. Geom. \textbf{46} (2011), no.~3, 405--416.
	
	\bibitem[GC50]{GC1950}
	I.~M. Gel'fand and M.~L. Cetlin, \emph{Finite-dimensional representations of
		the group of unimodular matrices}, Doklady Akad. Nauk SSSR (N.S.) \textbf{71}
	(1950), 825--828.
	
	\bibitem[GMS20]{GMS}
	Gonçalo Gutierres, Ricardo Mamede, and José~Luis Santos, \emph{Commutation
		classes of the reduced wordsfor the longest element of $\mathfrak{S}_n$},
	Electron. J. Combin. \textbf{27} (2020), no.~2, Paper No. 2.21.
	
	\bibitem[GP93]{GoodmanPollack93}
	Jacob~E. Goodman and Richard Pollack, \emph{Allowable sequences and order types
		in discrete and computational geometry}, New trends in discrete and
	computational geometry, Algorithms Combin., vol.~10, Springer, Berlin, 1993,
	pp.~103--134.
	
	\bibitem[GP00]{GlPo00}
	Oleg Gleizer and Alexander Postnikov, \emph{Littlewood-{R}ichardson
		coefficients via {Y}ang-{B}axter equation}, Internat. Math. Res. Notices
	(2000), no.~14, 741--774.
	
	\bibitem[GS83]{GS83}
	Victor~W. Guillemin and Shlomo Sternberg, \emph{The {G}el'fand-{C}etlin system
		and quantization of the complex flag manifolds}, J. Funct. Anal. \textbf{52}
	(1983), no.~1, 106--128.
	
	\bibitem[Kas90]{Kash90}
	Masaki Kashiwara, \emph{Crystalizing the {$q$}-analogue of universal enveloping
		algebras}, Comm. Math. Phys. \textbf{133} (1990), no.~2, 249--260.
	
	\bibitem[Knu92]{Knu}
	Donald~E. Knuth, \emph{Axioms and hulls}, Lecture Notes in Computer Science,
	vol. 606, Springer-Verlag, Berlin, 1992.
	
	\bibitem[Knu98]{knuth}
	\bysame, \emph{The art of computer programming. {V}ol. 3}, Addison-Wesley,
	Reading, MA, 1998, Sorting and searching, Second edition.
	
	\bibitem[Lit98]{Li}
	Peter Littelmann, \emph{Cones, crystals, and patterns}, Transform. Groups
	\textbf{3} (1998), no.~2, 145--179.
	
	\bibitem[Lus90]{Lus90}
	G.~Lusztig, \emph{Canonical bases arising from quantized enveloping algebras},
	J. Amer. Math. Soc. \textbf{3} (1990), no.~2, 447--498.
	
	\bibitem[Mac95]{Macdonald}
	Ian~G. Macdonald, \emph{Symmetric functions and {H}all polynomials}, second
	ed., Oxford Mathematical Monographs, The Clarendon Press, Oxford University
	Press, New York, 1995, With contributions by A. Zelevinsky, Oxford Science
	Publications.
	
	\bibitem[OEIS]{OEIS}
	\emph{\textup{The {O}n-{L}ine {E}ncyclopedia of {I}nteger {S}equences
			({OEIS})}} (accessed July 16, 2020).
	
	\bibitem[Sam11]{Samuel11}
	Matthew~J. Samuel, \emph{Word posets, with applications to coxeter groups},
	Electronic Proceedings in Theoretical Computer Science, EPTCS (Petr Ambrož,
	Štěpán Holub, and Zuzana Masáková, eds.), vol.~63, 2011, pp.~226--230
	(English).
	
	\bibitem[Sta12]{ec1}
	Richard~P. Stanley, \emph{Enumerative combinatorics. {V}olume 1}, second ed.,
	Cambridge Studies in Advanced Mathematics, vol.~49, Cambridge University
	Press, Cambridge, 2012.
	
	\bibitem[STWW17]{STWW17}
	Anne Schilling, Nicolas~M. Thiéry, Graham White, and Nathan Williams,
	\emph{Braid moves in commutation classes of the symmetric group}, European J.
	Combin. \textbf{62} (2017), 15--34.
	
	\bibitem[Thr52]{Thrall}
	R.~M. Thrall, \emph{A combinatorial problem}, Michigan Math. J. \textbf{1}
	(1952), 81--88. \MR{49844}
	
	\bibitem[Tit69]{Ti}
	Jacques Tits, \emph{Le probl\`eme des mots dans les groupes de {C}oxeter},
	Symposia {M}athematica ({INDAM}, {R}ome, 1967/68), {V}ol. 1, Academic Press,
	London, 1969, pp.~175--185.
	
\end{thebibliography}

\providecommand{\bysame}{\leavevmode\hbox to3em{\hrulefill}\thinspace}
\providecommand{\MR}{\relax\ifhmode\unskip\space\fi MR }
\providecommand{\MRhref}[2]{%
	\href{http://www.ams.org/mathscinet-getitem?mr=#1}{#2}
}
\providecommand{\href}[2]{#2}

\end{document}